\newtheorem{theorem}[subsection]{Theorem}
\newtheorem{proposition}[subsection]{Proposition}
\newtheorem{corollary}[subsection]{Corollary}
\theoremstyle{definition}
\newtheorem{definition}[subsection]{Definition}
\theoremstyle{remark}
\newtheorem{example}[subsection]{Example}
\newtheorem{remark}[subsection]{Remark}
\newcommand{\Top}{\mathsf{Top}_*}
\newcommand{\Spt}{\mathsf{Spt}}
\newcommand{\TQ}{\mathsf{TQ}}
\newcommand{\Sp}{\Sigma^{\infty}}
\newcommand{\U}{\Omega^{\infty}}
\newcommand{\Tot}{\operatorname{\mathsf{Tot}}}
\newcommand{\holim}{\operatorname{holim}}
\newcommand{\Alg}[1]{ {\mathsf{Alg}_{\mathcal{#1}}} }
\newcommand{\AlgN}[1]{\mathsf{Alg}^{\omega}_{\mathsf{lev}}\left(\mathcal{#1}\right)}
\newcommand{\AlgSym}[1]{\mathsf{Alg}^{\Sigma}_{\mathsf{lev}}\left(\mathcal{#1}\right)}
\newcommand{\SymSeq}{\mathsf{SymSeq}}
\newcommand{\Oper}{\mathsf{Oper}}
\newcommand{\Map}{\operatorname{Map}}
\newcommand{\cro}{\operatorname{cr}}
\newcommand{\hofib}{\operatorname{hofib}}
\newcommand{\tohofib}{\operatorname{tohofib}}
\newcommand{\tohocofib}{\operatorname{tohocofib}}
\newcommand{\colim}{\operatorname{colim}}
\newcommand{\id}{\mathrm{id}}
\newcommand{\Id}{\mathtt{Id}}
\newcommand{\Cobar}{\operatorname{Cobar}}
\newcommand{\Barr}{\operatorname{Bar}}
\newcommand{\pt}{*}
\newcommand{\Cal}[1]{\mathcal{#1}}
\newcommand{\Ncirc}[1]{\mathbf{N}^{\hat{\circ}#1}}
\newcommand{\circhat}{\hat{\circ}}
\newcommand{\circdot}{{\odot}}
\newcommand{\tensorhat}{\hat{\otimes}}
\newcommand{\tensordot}{\dot{\otimes}}
\newcommand{\boxcirc}{\mathring{\square}}
\newcommand{\Nl}{\mathbf{N}_{\mathsf{lev}}}
\newcommand{\Isym}{\Cal{I}^{\Sigma}}
\newcommand{\der}[1]{\partial_* \Id}
\newcommand{\SD}{\Sigma{\cdot}\Delta^{\bullet}}
\newcommand{\bdel}{\mathbf{\Delta}}
\newcommand{\hocolim}{\operatorname{hocolim}}
\newcommand{\coend}[1]{\mathsf{coEnd}(\Cal{#1})}
\newcommand{\Dres}{\bdel^{\mathsf{res}}}
\newcommand{\Prim}{\operatorname{\mathsf{Prim}}}
\newcommand{\fco}{C(\Cal{O})} 
\newcommand{\ptC}{\pmb{\phi}}
\title[Derivatives of the identity]{On the Goodwillie derivatives of the identity in structured ring spectra}
\author{Duncan A. Clark}
\address{Department of Mathematics, Ohio State University, 231 W 18th Ave, Columbus, OH 43210-1174, USA}
\email{clark.1843@osu.edu}
\subjclass[2010]{Primary: 55P48; Secondary: 55P43, 18D50} 
\keywords{Functor calculus, operad}
\begin{document}
\date{\today}
\maketitle

\begin{abstract}
    The aim of this paper is three-fold: (i) we construct a natural highly homotopy coherent operad structure on the derivatives of the identity functor on structured ring spectra which can be described as algebras over an operad $\Cal{O}$ in spectra, (ii) we prove that every connected $\Cal{O}$-algebra has a naturally occurring left action of the derivatives of the identity, and (iii) we show that there is a naturally occurring weak equivalence of highly homotopy coherent operads between the derivatives of the identity on $\Cal{O}$-algebras and the operad $\Cal{O}$. 
    
    Along the way, we introduce the notion of $\mathbf{N}$-colored operads with levels which, by construction, provides a precise algebraic framework for working with and comparing highly homotopy coherent operads, operads, and their algebras. 
\end{abstract}


\section{Introduction}

A slogan of functor calculus widely expected to hold is that the symmetric sequence of Goodwillie derivatives of the identity functor on a suitable model category $\mathsf{C}$, denoted $\partial_* \Id_{\mathsf{C}}$, ought to come equipped with a natural operad structure. A result of this type was first proved by Ching in \cite{Ching-thesis} for $\mathsf{C}=\mathsf{Top}_*$ and more recently in the setting of $\infty$-categories in \cite{Ching-day-conv}. In this paper, we construct an explicit ``highly homotopy coherent'' operad structure for the derivatives of the identity functor in the category of algebras over a reduced operad $\Cal{O}$ in spectra.

The derivatives of the identity in $\Alg{O}$ have previously been studied (\cite{Per-thesis}, \cite{KuhPer}) and it is known that $\Cal{O}[n]$ is a model for $\partial_n \Id_{\Alg{O}}$ -- the $n$-th Goodwillie derivative of $\Id_{\Alg{O}}$. It is further conjectured (see, e.g.,  Arone-Ching \cite{AC-1}) that $\partial_* \Id_{\Alg{O}}$ and $\Cal{O}$ be equivalent as operads: a main difficulty of which is describing an intrinsic operad structure on the derivatives of the identity which may be compared with that of the operad $\Cal{O}$. Our main theorem addresses this conjecture. 

\begin{theorem} \label{main} Let $\Cal{O}$ be an operad in spectra such that $\Cal{O}[n]$ is $(-1)$-connected for $n\geq 1$ and $\Cal{O}[0]=\pt$. Then,
    \begin{enumerate}[label=(\alph*)]
        \item The derivatives of the identity in $\Alg{O}$ can be equipped with a natural highly homotopy coherent operad structure
        \item Moreover, with respect to this structure, $\partial_* \Id_{\Alg{O}}$ is equivalent to $\Cal{O}$ as highly homotopy coherent operads. 
    \end{enumerate}
\end{theorem}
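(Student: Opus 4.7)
The plan is to realize $\partial_* \Id_{\Alg{O}}$ as the homotopy limit (totalization in the appropriate levelled category of symmetric sequences) of a cosimplicial object whose cosimplicial structure itself encodes the desired highly homotopy coherent operad structure, and then to identify this cosimplicial operad with the operad $\Cal{O}$. The starting point is the monadic cobar resolution of $\Id_{\Alg{O}}$ arising from the adjunction $\Sp \dashv \U$ between $\Alg{O}$ and $\Spt$, where $\Sp$ is the stabilization (topological Quillen homology) functor and $\U$ its right adjoint equipping a spectrum with the trivial $\Cal{O}$-algebra structure. This yields a natural cosimplicial resolution $\Id_{\Alg{O}} \simeq \holim_\Delta (\U\Sp)^{\bullet+1}$. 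The connectivity hypothesis on $\Cal{O}$ (ensuring convergence of the Taylor tower and that $\partial_*$ commutes with the relevant $\holim_\Delta$) then delivers $\partial_* \Id_{\Alg{O}} \simeq \holim_\Delta \partial_* (\U\Sp)^{\bullet+1}$.

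For part (a), I would interpret the resulting cosimplicial symmetric sequence as a levelled operad in the $\mathbf{N}$-colored sense developed earlier in the paper. The iterate $\partial_*(\U\Sp)^{k+1}$ is packaged at level $k$; the coface maps encode the partial composition operations up to the coherent higher homotopies tracked by the cosimplicial direction, and the codegeneracies provide the unit and symmetric structure. This is precisely the algebraic framework of highly homotopy coherent operads the paper constructs, and it yields the natural structure asserted in (a).

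For part (b), the key computational input is the aritywise identification $\partial_*(\U\Sp) \simeq \Cal{O}$, which rests on the fact that $\U\Sp$ is modelled by the free $\Cal{O}$-algebra construction applied to the stabilization and that $\Cal{O}[n]$ is a model for $\partial_n \Id_{\Alg{O}}$. I would promote this to a morphism of highly homotopy coherent operads from $\Cal{O}$ (viewed as a strict, hence degenerate, levelled operad) into $\partial_* \Id_{\Alg{O}}$, factoring through an intermediate strict levelled operad built directly from iterates of $\U\Sp$, and then verifying that the comparison is a levelwise weak equivalence in the appropriate model structure on $\AlgN{C}$; naturality is inherited automatically from the naturality of the cobar construction.

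I expect the main obstacle to be assembling the cosimplicial derivative data into a genuine highly homotopy coherent operad structure: while the cobar construction readily produces a cosimplicial object of symmetric sequences, transferring this into the levelled operad framework requires packaging all the higher cells (partial compositions, $\Sigma$-actions, and cosimplicial coherences) simultaneously, and it is here that the combinatorial machinery of $\mathbf{N}$-colored levelled operads becomes indispensable. A secondary difficulty is the interchange $\partial_* \circ \holim_\Delta \simeq \holim_\Delta \circ \partial_*$, for which the $(-1)$-connectivity of each $\Cal{O}[n]$ is precisely the connectivity estimate needed to ensure uniform convergence of the Taylor tower along the cosimplicial resolution.
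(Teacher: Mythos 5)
Your overall architecture — resolve the identity by the stabilization cosimplicial object, use connectivity estimates to commute $\partial_*$ past $\holim_{\bdel}$, then compare with $\Cal{O}$ via the coaugmentation — matches the paper. But there is a genuine gap at the heart of part (a). You assert that the coface and codegeneracy maps of $\partial_*(\U\Sp)^{\bullet+1}$ "encode the partial composition operations." They do not: the cofaces are maps $J^{(k)}\to J^{(k+1)}$ internal to the cosimplicial object (inserting $\Cal{O}\to J$), and the codegeneracies use $J\circ_J J\cong J$; totalizing this data only produces a symmetric sequence. The operad multiplication on $\Tot C(\Cal{O})$ requires an \emph{additional} datum that your proposal never constructs, namely a pairing $m\colon C(\Cal{O})\boxcirc C(\Cal{O})\to C(\Cal{O})$ with respect to the box product of cosimplicial symmetric sequences, built from the maps $m_{p,q}\colon J^{(p)}\circ J^{(q)}\to J^{(p+q)}$ induced by the operad multiplication $J\circ J\to J$. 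One then needs a McClure--Smith-type theorem (Proposition \ref{main-1}) that $\Tot$ of a $\boxcirc$-monoid is an algebra over $\mathsf{coEnd}(\SD_+)\simeq\Oper$, and this in turn requires first rigidifying $\partial_*(UQ)^{\bullet+1}$ (defined only up to homotopy) to the strict model $C(\Cal{O})=J^{(\bullet+1)}$ via the $\Alg{O}$ Snaith splitting. Without the box-product pairing and the rigidification, the "packaging into the levelled operad framework" you defer to has no input.

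Part (b) also contains a false key step: $\partial_*(\U\Sp)\not\simeq\Cal{O}$. One iterate satisfies $\partial_*(UQ)\simeq J\simeq\tau_1\Cal{O}$ (concentrated in arity $1$ up to homotopy), and more generally $\partial_*(UQ)^{k+1}\simeq J^{(k+1)}$; no single cosimplicial stage recovers $\Cal{O}$. The identification $\partial_*\Id_{\Alg{O}}\simeq\Cal{O}$ only emerges after totalization, because the coface $k$-cubes of the coaugmented object $\Cal{O}\to C(\Cal{O})$ are homotopy cartesian in arities $\leq k$ (again by the higher stabilization estimates), so that $\Cal{O}[n]\xrightarrow{\sim}\holim_{\bdel}C(\Cal{O})[n]$. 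The correct comparison of operad structures then runs through the constant cosimplicial object $\underline{\Cal{O}}$, which is a $\boxcirc$-monoid via $\Cal{O}\circ\Cal{O}\to\Cal{O}$, with $\Tot\underline{\Cal{O}}\simeq\Cal{O}$ and $\Tot\underline{\Cal{O}}\xrightarrow{\sim}\Tot C(\Cal{O})$ compatible with the $\mathsf{coEnd}(\SD_+)$-actions. Your proposed "intermediate strict levelled operad built from iterates of $\U\Sp$" is the right instinct, but as stated it rests on the incorrect aritywise identification rather than on the cartesianness of the coface cubes.
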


The proofs of parts (a) and (b) to Theorem \ref{main} may be found in Sections \ref{sec:1a} and \ref{O-and-dern-equiv}, respectively. Our technique is to avoid working with the identity directly by replacing it with the Bousfield-Kan cosimplicial resolution provided by the stabilization adjunction $(Q,U)$ for $\Cal{O}$-algebras. The strong cartesianness estimates of Blomquist \cite{Blom} (see also Ching-Harper \cite{Ching-Harper-BM}) allow us to then express $\partial_* \Id_{\Alg{O}}$ as the homotopy limit of the cosimplicial diagram (showing only coface maps) \begin{equation} \label{eq:der-model-stab} \partial_*(QU)^{\bullet+1}=  \left(\xymatrix{
    \partial_*(UQ) \ar@<-.5ex>[r] \ar@<.5ex>[r] &
    \partial_*(UQ)^2 \ar@<-1ex>[r] \ar[r] \ar@<1ex>[r] &
    \partial_*(UQ)^3  \cdots 
}\right)\end{equation}
whose terms $\partial_*(QU)^{k+1}$ may be readily computed by an $\Cal{O}$-algebra analogue of the Snaith splitting. We thus obtain a natural cosimplicial resolution $C(\Cal{O})$ of the derivatives of the identity such that $\partial_* \Id_{\Alg{O}}\simeq \holim_{\Delta} C(\Cal{O})$ which furthermore may be identified as the $\TQ$ resolution of $\Cal{O}$ as a left $\Cal{O}$-module. Our approach is influenced by the work of Arone-Kankaanrinta \cite{AK} wherein they use the cosimplicial resolution offered by the stabilization adjunction between spaces and spectra to analyze the derivatives of the identity in spaces via the classic Snaith splitting. 

We induce a highly homotopy coherent operad structure (i.e., $A_{\infty}$-operad) on $\partial_* \Id_{\Alg{O}}$ by constructing a pairing of the resolution $C(\Cal{O})$ with respect to the box product $\square$ for cosimplicial objects (see Batanin \cite{Bat-box}). Thus, we extend to the monoidal category of symmetric sequences a technique utilized in McClure-Smith \cite{MS-box}: specifically, that if $X$ is a $\square$-monoid in cosimplicial spaces or spectra then $\Tot(X)$ is an $A_{\infty}$-monoid (with respect to the closed, symmetric monoidal product for spaces or spectra).

There are some subtleties that arise in that (i) the box product is not as well-behaved when working with the composition product $\circ$ of symmetric sequences, and (ii) the extra structure encoded by $\circ$ leads us to work with $\mathbf{N}$-colored operads to express $A_{\infty}$-monoids with respect to composition product.  As such, one of the main developments of this paper is that of $\mathbf{N}$-colored operads \textit{with levels} (i.e., $\Nl$-operads) as useful bookkeeping tools designed to algebraically encode operads (i.e., strict composition product monoids) and ``fattened-up'' operads as their algebras. Within this framework of $\Nl$-operads we can also describe algebras over an $A_{\infty}$-operad. 

\subsection{Remark on Theorem \ref{main}} \label{sec:remark}
In the statement of Theorem \ref{main} the phrase ``naturally occuring'' means that we refrain from endowing $\partial_* \Id_{\Alg{O}}$ with the operad structure from $\Cal{O}$ directly. Rather, we produce a method for intrinsically describing operadic structure possessed by the derivatives of the identity that should carry over to other model categories suitable for functor calculus. In particular, the constructions of such an operad structure on the derivatives of the identity should: \begin{enumerate}[label=(\roman*)] 
\item Recover the ($A_{\infty}$-) operad structure endowed on $\partial_* \Id_{\mathsf{Top}_*}$ described by Ching in \cite{Ching-thesis} 

\item Endow the derivatives of an arbitrary homotopy functor $F\colon \Alg{O}\to \Alg{O'}$ with a natural bimodule structure over $(\partial_* \Id_{\Alg{O'}}, \partial_* \Id_{\Alg{O}})$ suitable for describing a chain rule (as in Arone-Ching \cite{AC-1})

\item Be fundamental enough to describe an operad structure on $\partial_* \Id_{\mathsf{C}}$ and chain rule for a suitable model category $\mathsf{C}$ (e.g.,  one in which one can do functor calculus).
\end{enumerate}

\subsection{Future applications} The three facets outlined above are all matters of ongoing work and will not be pursued in this document. We note however that our constructions are anticipated to underlie a ``highly homotopy coherent chain rule'' for composable functors $F,G$ of structured ring spectra. That is, a comparison map $\partial_*F\circ \partial_*G\to \partial_*(FG)$ which, under the identification of $\partial_* \Id_{\Alg{O}}\simeq \Cal{O}$, prescribes a suitably coherent $(\Cal{O}',\Cal{O})$-bimodule structure on the derivatives of an arbitrary functor $F\colon \Alg{O}\to \Alg{O'}.$ Such a result would extend work of Arone-Ching \cite{AC-1} (see also Klein-Rognes \cite{Klein-Rog}, Ching \cite{Ching-chain-rule}, and Yeakel \cite{Yeak}) to categories of structured ring spectra and lend to a more robust analysis of functors thereof.

Item (iii) above is perhaps the most lofty and also the most tempting. We are interested in utilizing our techniques to endow $\partial_* \Id_{\mathsf{C}}$ with a naturally occurring operad structure for a suitable model category $\mathsf{C}$. One application of such a result would be in providing homotopy descent data in the form of an equivalence of categories between  (a suitable subcategory of) $\mathsf{C}$ and algebras over the operad $\partial_* \Id_{\mathsf{C}}$ (see Hess \cite{Hess}, Behrens-Rezk \cite{BR}, and Francis-Gaitsgory \cite{FG}). Such an extension of our work seems to rely crucially on the existence of Snaith splittings associated to the stabalization adjunction $(\Sp_{\mathsf{C}}, \U_{\mathsf{C}})$ between $\mathsf{C}$ and $\mathsf{Spt}(\mathsf{C})$ in order to provide a cosimplicial model $\partial_*\Id_{\mathsf{C}}$. Such a splitting is necessarily a statement about the Taylor tower of the associated comonad $\mathsf{K}_{\mathsf{C}}=\Sp_{\mathsf{C}}\U_{\mathsf{C}}$ and the properties of its derivatives. If $\Spt(\mathsf{C})\simeq \Spt$ then Arone-Ching provide a model for the derivatives of $\mathsf{K}_{\mathsf{C}}$ in \cite{AC-1} and Lurie outlines a model for $\partial_*\mathsf{K}_{\mathsf{C}}$ as an $\infty$-cooperad in \cite[\textsection 5.2 and \textsection 6]{Lur}. A more rigid description for the cooperad structure in general is the subject of ongoing work and will not be further pursued in this paper.

\subsection{Outline of the argument}
Our main tool is to utilize the Bousfield-Kan cosimplicial resolution of an $\Cal{O}$-algebra $X$ with respect to the $\TQ$-homology adjunction \[ \xymatrix{
    \Alg{O} \ar@<.75ex>[r]^-{Q} & \textsf{Alg}_{J}\simeq \mathsf{Mod}_{\Cal{O}[1]}. \ar@<.5ex>[l]^-{U}
    }\] 
Here, $J$ denotes a suitable replacement of $\tau_1\Cal{O}$, the truncation of $\Cal{O}$ above level 1 (see Section \ref{sec:stab}). Of important note is that the pair $(Q,U)$ is equivalent to the stabilization adjunction for $\Cal{O}$-algebras (see Section \ref{sec:stab}) and that $\mathsf{Alg}_J$ and $\mathsf{Mod}_{\Cal{O}[1]}$ are Quillen equivalent. 

Using the strong connectivity estimates offered by Blomquist's higher stabilization theorems \cite[\textsection 7]{Blom}, we first show that $\partial_* \Id_{\Alg{O}}$ is equivalent to $\holim_{\Delta} \partial_*(UQ)^{\bullet+1}$ (see (\ref{eq:der-model-stab})). Similarly to Arone-Kankaanrinta \cite{AK}, in which they compute the $n$-excisive approximations (resp. $n$-th derivatives) of the identity functor on $\Top$ in terms of the $n$-excisive approximations (resp. $n$-th derivatives) of iterates of stabilization $\U \Sp$ by means of the Snaith splitting, we then analyze the terms $\partial_*(UQ)^{k+1}$ via an analog of the Snaith splitting in $\Alg{O}$.

 Essentially a statement about the Taylor tower of the associated comonad $QU$, the Snaith splitting in $\Alg{O}$ permits equivalences of symmetric sequences\[\partial_*(QU)\simeq  |\Barr(J, \Cal{O}, J)|\simeq J\circ^{\mathsf{h}}_{\Cal{O}} J =:B(\Cal{O})\] as $(J,J)$-bimodules (here, $\circ^{\mathsf{h}}$ denotes the derived composition product). By iterated applications of the splitting, we may compute \[\partial_*(UQ)^{k+1}\simeq \underbrace{B(\Cal{O}) \circ_J \cdots \circ_J B(\Cal{O})}_{k}\simeq \underbrace{J\circ_{\Cal{O}}\cdots  \circ_{\Cal{O}} J}_{k+1}= C(\Cal{O})^k\] and moreover that $\partial_*(UQ)^{\bullet+1}\simeq C(\Cal{O})$ as cosimplicial symmetric sequences. Here, $C(\Cal{O})$ is given by \[\xymatrix{
    J \ar@<.5ex>[r] \ar@<-.5ex>[r] &
    J\circ_{\Cal{O}}J \ar@<-1ex>[r] \ar[r] \ar@<1ex>[r] &
    J\circ_{\Cal{O}}J \circ_{\Cal{O}}J \ar@<-1.5ex>[r] \ar@<-.5ex>[r] \ar@<.5ex>[r] \ar@<1.5 ex>[r] &
    J\circ_{\Cal{O}}J \circ_{\Cal{O}}J\circ_{\Cal{O}} J \cdots
}\]
with coface map $d^i$ induced by inserting $\Cal{O}\to J$ at the $i$-th position (see Remark \ref{prop:der_n-model-equivs} along with (\ref{TQ-res-1})). 

Note, $B(\Cal{O})$ is (at least up to homotopy) a cooperad with a coaugmentation map $J\to B(\Cal{O})$, and our $C(\Cal{O})$ is essentially a rigid cosimplicial model for the cobar construction on $B(\Cal{O})$. In particular, this allows us to bypass referencing any particular model for the comultiplication on $B(\Cal{O})$ (e.g.,  that of Ching \cite{Ching-thesis}, see also Section \ref{B(O)}).  

We construct a pairing $m\colon C(\Cal{O})\square C(\Cal{O})\to C(\Cal{O})$ with respect to the box product (Definition \ref{def:box-product}) of cosimplicial symmetric sequences via compatible maps of the form (induced by the operad structure maps $J\circ J\to J$) \[ m_{p,q}\colon \underbrace{J\circ_\Cal{O} \cdots \circ_{\Cal{O}}(J}_{p+1}\circ \underbrace{J)\circ_\Cal{O} \cdots \circ_{\Cal{O}}J}_{q+1} \to \underbrace{J\circ_\Cal{O}\cdots \circ_{\Cal{O}} J\circ_{\Cal{O}} \cdots \circ_{\Cal{O}}J}_{p+q+1}\] along with a unit map $u\colon \underline{I}\to C(\Cal{O})$, where $\underline{I}$ denotes the constant cosimplicial symmetric sequence on $I$. Our argument is then to induce an $A_{\infty}$-monoidal pairing on $\partial_* \Id_{\Alg{O}}$ --- modeled as $\Tot C(\Cal{O})$ --- via $m$ and $u$ (compare with McClure-Smith \cite{MS-box}).

One difficulty which arises is that the composition product of symmetric sequences is not as well-behaved of a product as, say, cartesian product of spaces or smash product of spectra. Thus, we \textit{do not} obtain $m$ as a strictly monoidal pairing on the level of cosimplicial diagrams. In resolving this issue we introduce a specialized category of $\mathbf{N}$-colored operads \textit{with levels} (i.e., $\Nl$-operads) designed specifically to overcome these technical subtleties of the composition product. As a result, a large portion of this document is dedicated to carefully developing the framework of $\Nl$-operads and their algebras. 

With these details in tow it is then possible to produce an $A_{\infty}$-operad structure on $\partial_* \Id_{\Alg{O}}$. Let $\Tot$ denote restricted totalization $\mathrm{Tot}^{\mathsf{res}}$ (see Section \ref{sec:Use-of-tot}), we then obtain an $A_{\infty}$-monoidal pairing \[\Tot C(\Cal{O})\circ \Tot C(\Cal{O})\to \Tot C(\Cal{O})\] described as an algebra over a certain $\Nl$-operad which is a naturally ``fattened-up'' replacement of the $\Nl$-operad whose algebras are strict operads (see Definition \ref{def-of-Oper} along with Propositions \ref{prop:Oper-is-Nl} and \ref{oper-oper}). Moreover, the coaugmentation $\Cal{O}\to C(\Cal{O})$ provides a comparison between $\Cal{O}$ and $\partial_* \Id_{\Alg{O}}$ which we show yields an equivalence of $A_{\infty}$-operads, thus resolving the aforementioned conjecture. 

\begin{remark} It is worth noting that Ching \cite{Ching-day-conv} has recently proved a similar result in the context of $\infty$-categories using the Day convolution. The author expects a comparison should be possible between the arguments presented in this document and those due to Ching, but is not aware of any explicit description at present. 
\end{remark}

\subsection{Organization of paper} 
Section 2 provides an overview of the relevant details of working with $\Cal{O}$-algebras and their $\TQ$-completions. In sections 3 and 4 we provide an overview of the calculus of homotopy functors between categories of operadic algebras and describe the particular model for the derivatives of the identity that we employ. Section 5 is devoted to the box-product of cosimplicial objects. Much of the technical bulk of our paper occurs in the last three sections: Section 6 provides the framework for describing our notion of (symmetric) $\mathbf{N}$-colored operads \textit{with levels}. Section 7 provides proofs regarding of $\Nl$-operads of interest and Section 8 contains the proofs of our main theorems on the derivatives of the identity.

\subsection{Acknowledgements} The author is particularly grateful for the continued advising and feedback of John Harper. The author also wishes to extend thanks to Michael Ching for a stimulating visit to Amherst College and helpful remarks on an early draft of this paper, and to the anonymous referee whose comments greatly improved the readability of this document. The author also benefited from discussions with Nikolas Schonsheck, Matt Carr and Jens Jakob Kjaer, and was partially supported by National Science Foundation grant DMS-1547357 and Simons Foundation: Collaboration Grants for Mathematicians \#638247.

\section{Operads of spectra and their algebras}
We work in the category algebras over a reduced operad in a closed, symmetric monoidal category of spectra $(\Spt, \wedge, S)$. For convenience we will use the category of $S$-modules as in Elmendorf-Kriz-Mandell-May \cite{EKMM} and refer to such objects as \textit{spectra}. The main technical benefit of working with $S$-modules is that all spectra will be fibrant (and thus $\Tot \Cal{X}$ of a levelwise fibrant diagram will already correctly model $\holim_{\Delta} \Cal{X}$), though we note that similar results should hold in the category of symmetric spectra by utilizing suitable fibrant replacement monads. 

We observe that $\Spt$ is a cofibrantly generated, closed symmetric monoidal model category (see, e.g.,  \cite[Definition 1.12]{AC-1}) and write $\Map^{\Spt}(X,Y)$ for the internal mapping object of $\Spt$. When it is clear from context we write $\Map$ for $\Map^{\Spt}$. We let $\mathsf{Top}$ denote the category of compactly generated Hausdorff spaces. In \cite{EKMM}, it is shown that $\Spt$ admits a tensoring of $\Top$ which may be extended to $\mathsf{Top}$ by first adding a disjoint basepoint. In particular for $K\in \mathsf{Top}$, $X,Y\in \Spt$ there are natural isomorphisms \[\hom(K_+ \wedge X, Y)\cong \hom(X, \Map^{\Spt} (K_+, Y)).\]  Though we will not make explicit use of it, we define a simplicial tensoring of $\Spt$ via $K\wedge X:=|K|\wedge X$ for $K\in \mathsf{sSet}_*$ and $X\in \Spt$. 


\subsection{Symmetric sequences}
\label{sec:comp-prod-rmks} Let $(\mathsf{C}, \otimes, \mathbf{1})$ be a closed symmetric monoidal category and write $\Map^{\mathsf{C}}$ for the mapping object in $\mathsf{C}$. When $\mathsf{C}$ is clear from context we write $\Map$ for $\Map^{\mathsf{C}}$. We will require that $\mathsf{C}$ be cocomplete, and write $\ptC$ for the initial object of $\mathsf{C}$; particular categories of interest are $\Spt$ and $\Top$.

Recall that a \textit{symmetric sequence} in $\mathsf{C}$ is a collection $X[n]\in\mathsf{C}$ for $n\geq 0$ such that $X[n]$ admits a (right) action by $\Sigma_n$. We let $\SymSeq_{\mathsf{C}}$ denote the category of symmetric sequences in $\mathsf{C}$ and action preserving morphisms. A symmetric sequence $X$ is \textit{reduced} if $X[0]=\ptC$ (some authors require in addition that $X[1]\cong \mathbf{1}$, however we omit this condition). When $\mathsf{C}$ is clear from context we will simply write $\SymSeq$.
Note that $\SymSeq$ comes equipped with a monoidal product $\circ$, the \textit{composition product} (also called \textit{circle product}) defined as follows (see also \cite{Rezk-thesis} or \cite{Har-1}). 

\subsection{The composition product of symmetric sequences} For $X,Y\in \SymSeq$ we define $X\circ Y$ at level $k$ by \begin{equation}
    (X {\circ} Y)[k] = \coprod_{n\geq 0} X[n] \otimes_{\Sigma_n} Y^{\check{\otimes} n} [k].
\end{equation}
Here, $\check{\otimes}$ denotes the \textit{tensor} of the symmetric sequences (e.g., as in \cite{Har-1}). For $n,k\geq 0$, $Y^{\check{\otimes} n}[k]$ is computed as \[ \coprod_{\mathbf{k}\xrightarrow{\pi} \mathbf{n}} Y[\pi_1] {\otimes} \cdots {\otimes} Y[\pi_n] \cong \coprod_{k_1+\cdots+k_n=k} \Sigma_k \cdot_{\Sigma_{k_1} \times\cdots\times \Sigma_{k_n}} Y[k_1] {\otimes} \cdots {\otimes} Y[k_n] \]
where $\pi$ runs over all surjections $\mathbf{k}=\{1,\dots,n\}\to \{1,\dots,n\}=\mathbf{n}$ and we set $\pi_i:=|\pi^{-1}(i)|$ for $i\in \mathbf{n}$. The composition product admits a unit $I$ given by $I[1]=\mathbf{1}$ and $I[k]=\ptC$ otherwise.

For our purposes, we find it convenient to work with with a slightly modified version of the composition product for reduced symmetric sequences. Let $X,Y\in \SymSeq$ be reduced. Let $(k_1,\dots,k_n)$ denote a sequence of integers $k_1,\dots,k_n\geq 1$ (allowing for repetition of entries) and set $\mathsf{Sum}_n^k$ to be the collection of orbits $(k_1,\dots,k_n)_{\Sigma_n}$ such that $\sum_{i=1}^n k_i=k$. 

\begin{definition}\label{def:H} Given $k_1,\dots,k_n\geq 1$ we define $H(k_1,\dots,k_n)$ as the collection of block permutation matrices $\Sigma_{k_1}\times \cdots \times \Sigma_{k_n}\leq \Sigma_k$, along with the $\Sigma_{p_i}$ permutations of those blocks such that $k_j=d_i$. 
\end{definition}

\begin{remark} 
    We observe that orbits $(k_1,\dots,k_n)_{\Sigma_n}$ are in bijective correspondence to partitions $k=d_1p_1+\cdots+d_mp_m$ where $1\leq d_1<\cdots<d_m$ and $p_i\geq 1$. Given an orbit $(k_1,\dots,k_n)_{\Sigma_n}$ let $1\leq d_1\leq \cdots d_m$ be the distinct entries of multiplicity $p_i$. We note that there is an isomorphism (here, $\Sigma_m^{\wr n}$ denotes the \textit{wreath product} $\Sigma_m\wr \Sigma_n :=\Sigma_m^{\times n}\rtimes \Sigma_n$)
    \[H(k_1,\dots,k_n)\cong \Sigma_{d_1}^{\wr p_1}\times\cdots \times \Sigma_{d_m}^{\wr p_m}.\] 
    
    Moreover $H(k_1,\dots,k_n)$ admits a natural $\Sigma_n$ action by permutation of elements $k_i$ and the induced map $H(k_1,\dots,k_n) \to H(k_{\sigma(1)},\dots, k_{\sigma(n)})$ is an isomorphism for all $\sigma\in \Sigma_n$.\end{remark} 
    
    Though we will not need this fact, we remark that $H(k_1,\dots,k_n)$ may be identified with the stabilizer of the $\Sigma_k$ action on partitions of $\{1,\dots,k\}$ into sets of size $k_1,\dots,k_n$ (see, e.g.,  \cite[\textsection 1.12]{Ching-chain-rule}).

For $k\geq 0$ we set $\mathbf{\Sigma}[k]:=\coprod_{\sigma\in \Sigma_k} \mathbf{1}$.

\begin{remark} The composition product $X\circ Y$ may be equivalently written as \begin{equation} \label{comp-prod-def-orb} 
    (X{\circ} Y)[k]\cong \coprod_{n\geq 0} \coprod_{(k_1,\dots,k_n)_{\Sigma_n}\in \mathsf{Sum}_n^k} \mathbf{\Sigma}[k] \otimes_{H(k_1,\dots,k_n)} X[n] \otimes Y[k_1]\otimes \cdots \otimes Y[k_n].
\end{equation}
Here, the  action of $H(k_1,\dots,k_n)$ on $\mathbf{\Sigma}[k]$ is induced by that on $\Sigma_k$ and the action on $X[n]\otimes Y[k_1] \otimes \cdots \otimes Y[k_n]$ is given as follows (see also Ching \cite[1.13]{Ching-chain-rule})
\begin{itemize}
    \item $\Sigma_{p_1}\times \cdots \times \Sigma_{p_m}\leq \Sigma_n$ acts on $X[n]$ 
    \item for $i=1,\dots,m$, $\Sigma_{d_i}^{\wr p_i}$ acts on the factors $Y[k_j]$ such that $k_j=d_i$ by
    \subitem (i) permuting the $p_i$ factors $Y[d_i]$  
    \subitem (ii) acting by corresponding $\Sigma_{d_i}$ factor on each $Y[d_i]$. 
\end{itemize}
\end{remark}

We also make the following definition for the \textit{nonsymmetric composition product} $X\hat{\circ} Y$ (note that our definition differs from \cite{Har-2}) \begin{equation}
\label{comp-prod-nonsym}
    (X\hat{\circ} Y)[k] := \coprod_{n\geq 0} \coprod_{(k_1,\dots,k_n)_{\Sigma_n}\in \mathsf{Sum}_n^k} X[n]\otimes Y[k_1]\otimes \cdots \otimes Y[k_n].
\end{equation}

Note that $\hat{\circ}$ is not associative, our primary use for $\hat{\circ}$ will be as a bookkeeping tool for indexing the factors involved in expanding iterates of $\circ$ from the left (as in Section \ref{sec:N-lev}).

\subsection{Operads as monads}
A \textit{reduced operad} in $\mathsf{C}$ is a  symmetric sequence $\Cal{O}$ which is a monoid with respect to $\circ$, i.e., there are maps $\Cal{O}\circ \Cal{O}\to \Cal{O}$ and $I\to \Cal{O}$ which satisfy additional associativity and unitality relations (see, e.g.,  Rezk \cite{Rezk-thesis}). We will only consider reduced operads, and interpret \textit{operad} to mean reduced operad.

Any symmetric sequence $M$ gives rise to a functor $M\circ (-)$ on $\mathsf{C}$ given as follows (note $X^{\otimes 0}=\mathbf{1}$) \[ X\mapsto M\circ (X) = \bigvee_{n\geq 0} M[n] \otimes_{\Sigma_n} X^{\otimes n}. \] If $\Cal{O}$ is an operad, then the associated functor $\Cal{O}\circ(-)$ is a monad on $\mathsf{C}$ which we will frequently conflate with the operad $\Cal{O}$. We let $\mathsf{Alg}_{\Cal{O}}^{\mathsf{C}}$ denote the category of algebras for the monad associated to an operad $\Cal{O}$ in $\mathsf{C}$. 

When $\mathsf{C}=\Spt$ and $\Cal{O}$ is an operad of spectra, then $\Alg{O}=\mathsf{Alg}_{\Cal{O}}^{\Spt}$ is a pointed simplicial model category (see, e.g.,  \cite[\textsection 7]{Ching-Harper-Koszul}) when endowed with projective model structure from $\Spt$. For a further overview of notation and terminology we refer the reader to \cite[\textsection 3]{Har-1} or \cite[\textsection 2]{Rezk-thesis}. 

\subsection{Assumptions on $\Cal{O}$} \label{cofibrancy} 
From now on in this document we assume that $\Cal{O}$ is a reduced operad in $\Spt$ which obeys some mild cofibrancy conditions that are satisfied if, e.g.,  $\Cal{O}$ arises via the suspension spectra of a cofibrant operad in spaces. In particular, we require that the underlying symmetric sequence of  $\Cal{O}[n]$ be $\Sigma$-cofibrant (see, e.g.,  \cite[\textsection 9]{AC-1}) and that the terms $\Cal{O}[n]$ be $(-1)$-connected for all $n \geq 1$.


\subsection{Use of restricted totalization} \label{sec:Use-of-tot}
We systematically interpret $\Tot$ of a cosimplicial diagram to mean restricted totalization (see also  \cite[\textsection 8]{Ching-Harper-Koszul}) \[\Tot:=\mathrm{Tot}^{\mathsf{res}}\cong\Map_{\Dres}^{\Spt} \left( \Delta^{\bullet}, -\right):=\left(\Map^{\Spt}(\Delta^{\bullet},-)\right)^{\Dres}.\] Here, $\bdel$ denote the usual simplicial category of finite totally-ordered sets $[n]:=\{0<1< \cdots<n\}$ and order preserving maps, $\Dres\subset \bdel$ is the subcategory obtained by omitting degeneracy maps, and $\Delta^{\bullet}$ denotes the usual cosimplicial space of topological $n$-simplices. For convenience, if  $C^{\bullet}$ is cosimplicial object, we will write $\Tot C^{\bullet}$ instead of the more technically correct $\Tot (C^{\bullet}|_{\Dres})$.

Diagrams shaped on $\Dres$ are referred to as \textit{restricted} cosimplicial diagrams. Importantly the inclusion $\Dres \to \bdel$ is homotopy left cofinal\footnote{The main property we are interested in here is that such functors induce equivalences on homotopy limits.} and so if $C^{\bullet}$ is a cosimplicial diagram in $\Alg{O}$ which is levelwise fibrant (as opposed to the stronger condition of Reedy fibrancy), there are equivalences  \[ \holim_{\bdel} C^{\bullet}\simeq \holim_{\Dres} C^{\bullet}\simeq  \Tot C^{\bullet}.\]

\subsection{Truncations of $\Cal{O}$} \label{sec:trunc}
For $n\geq 0$ we define $\tau_n\colon \SymSeq\to \SymSeq$ to be the \textit{$n$-th truncation functor} given at a symmetric sequence $M$ by \begin{equation*} 
(\tau_n M)[k] = \begin{cases} M[k] & k\leq n \\ \pt & k > n\end{cases} \end{equation*}
with natural transformations $\tau_n\to\tau_{n-1}$. We let $i_n$ be the fiber of $\tau_n\to \tau_{n-1}$, i.e., $i_n M [k] = \pt$ for $k\neq n$ and $i_n M[n]=M[n]$ in which case we say $i_nM$ is \textit{concentrated at level $n$}.

For $M=\Cal{O}$ the truncations $\tau_n \Cal{O}$ assemble into a tower of $(\Cal{O},\Cal{O}$)-bimodules which receives a map from $\Cal{O}$ of the form \begin{equation} \label{eq:truncation-towerc} 
\mathcal{O}\to \cdots \to \tau_3 \Cal{O} \to \tau_2 \Cal{O} \to \tau_1 \Cal{O}.
\end{equation}
The tower (\ref{eq:truncation-towerc}) is well studied and plays a central role in examining the homotopy completion of a structured ring spectrum as in \cite{Har-Hess}. Note as well that $\Cal{O}\to \tau_1\Cal{O}$ is a map of operads and there is a well-defined composite $\tau_1\Cal{O}\to\Cal{O}\to \tau_1\Cal{O}$ which factors the identity on $\tau_1\Cal{O}$.

\subsection{Change of operad adjunction} \label{change-of-Op} Associated to a map $f\colon \Cal{O}\to\Cal{O}'$ of operads there is a Quillen adjunction of the form (see, e.g.,  \cite{Rezk-thesis}) \begin{equation*} 
    \xymatrix{
    \Alg{O} \ar@<.75 ex>[r]^{f_*} &
    \Alg{O'} \ar@<.75 ex>[l]^{f^*}
    }
\end{equation*}
in which the left adjoint $f_*$ is given by the (reflective) coequalizer \[f_*(X):=\Cal{O'} \circ_{\Cal{O}} (X)=\colim \left(\xymatrix{ \Cal{O}'\circ\Cal{O}\circ(X)\ar@<.75ex>[r] \ar@<-.5ex>[r] & \Cal{O}'\circ (X) } \right)\] and the right adjoint $f^*$ is the forgetful functor along $f$. If $f$ is a levelwise equivalence then the above adjunction is a Quillen equivalence and furthermore the left derived functor $\mathsf{L}f_*$ may be calculated via a simplicial bar construction as follows (see, e.g.,  \cite{Har-1})  \begin{equation*}
    \mathsf{L}f_*(X):= \Cal{O}' \circ^{\mathsf{h}}_{\Cal{O}}(X)\simeq |\Barr(\Cal{O}', \Cal{O}, X^c)|.
\end{equation*}

\subsection{Stabilization of $\Cal{O}$-algebras}
\label{sec:stab}In order to have a well-defined calculus of functors on $\Alg{O}$ it is necessary to understand the stabilization of the category of such algebras. Note that $\Alg{\Cal{O}}$ is tensored over simplicial sets (see, e.g.,  \cite[\textsection 7]{Ching-Harper-Koszul}) and thus one can define $\mathsf{Sp}(\Alg{O})$, the category of Bousfield-Friendlander spectra of $\Cal{O}$-algebras, which is Quillen equivalent to the category of left $\Cal{O}[1]$-modules, $\mathsf{Mod}_{\Cal{O}[1]}$ (see, e.g.,  \cite{Bast-Man} or \cite[\textsection 2]{Per-thesis}). 

The stabilization map for $\Cal{O}$-algebras is thus equivalent to the left adjoint of (\ref{change-of-Op}) with respect to the map of operads $\Cal{O}\to \tau_1\Cal{O}$, i.e., \[\Sp_{\Alg{O}} X\simeq \tau_1\Cal{O} \circ_{\Cal{O}} (X)\] for $\Cal{O}$-algebras $X$. By analogy, $\U_{\Alg{O}}$ gives an $\Cal{O}[1]$-module trivial $\Cal{O}$-algebra structure above level $2$. Moreover, if $\Cal{O}[1]\cong S$, then the stabilization of $\Alg{O}$ is equivalent to the underlying category $\Spt$. 

As in \cite{Ching-Harper-Koszul}, we replace $\tau_1\Cal{O}$ by a ``fattened-up'' operad $J$ to produce an iterable model for $\TQ$-homology with the right homotopy type. That is, let $J$ be any factorization \[\Cal{O}\xrightarrow{\; h\;} J \xrightarrow{\; g \;} \tau_1 \Cal{O}\] in the category of operads, where $h$ is a cofibration and $g$ a weak equivalence. There are then change of operads adjunctions \begin{equation} \label{stab-of-O-alg}
    \xymatrix{
    \Alg{O} \ar@<.75 ex>[r]^{Q} &
    \mathsf{Alg}_{J} \ar@<.5 ex>[l]^{U} \ar@<.75ex>[r]^{g_*} &
    \mathsf{Alg}_{\tau_1\Cal{O}} \ar@<.5ex>[l]^{g^*}
    }  \cong \mathsf{Mod}_{\Cal{O}[1]}
\end{equation}
such that $(g_*,g^*)$ is a Quillen equivalence and, notably, $U$ preserves cofibrant objects (see \cite[5.49]{Har-Hess}). We refer to the pair $(Q,U)$ as the stabilization adjunction for $\Cal{O}$-algebras and use $\mathsf{Alg}_J$ as our model for the stabilization of $\Alg{O}$.
\subsection{$\TQ$-homology}
The total left derived functor $\mathsf{L}Q(X)=:\TQ(X)$ is called the \textit{$\TQ$-homology spectrum} of $X$ and the composite $\mathsf{R}U(\mathsf{L}Q(X))$ is the \textit{$\TQ$-homology $\Cal{O}$-algebra} of $X$. We note that the $\TQ$-homology spectrum of $X$ may be calculated in the underlying category $\Spt$ as \begin{equation*}
    \mathsf{L}Q(X)\simeq |\Barr(J,\Cal{O}, X^c)|\simeq |\Barr(\tau_1\Cal{O}, \Cal{O}, X^c)|.
\end{equation*}
For simplicity, we will assume the $\Cal{O}$-algebras we work with are cofibrant by first replacing $X$ by $X^c$, where $(-)^c$ denotes a functorial cofibrant replacement in $\Alg{O}$.

\subsection{The Bousfield-Kan resolution with respect to $\TQ$} \label{sec:TQ-completion}
Associated to the stabilization adjunction for $\Cal{O}$-algebras $(Q,U)$ there is a comonad $\mathsf{K}:=QU$ on $\mathsf{Alg}_J$. Given $Y$ a $\mathsf{K}$-coalgebra, we let $C(Y)$ denote the cosimplicial object $\Cobar(U, \mathsf{K}, Y)$.  

For $X\in \Alg{O}$, let $X\to C(X):= C(QX)$ be the coaugmented cosimplicial object given below \begin{align}
\label{TQ-res-1}
    X \to &\left( \xymatrix{
    UQ(X) \ar@<.5 ex>[r] \ar@<-.5ex>[r] &
    (UQ)^2 (X) \ar@<1 ex>[r] \ar[r] \ar@<-1 ex>[r] &
    (UQ)^3(X) \cdots
    }\right)\\
    \cong &\left(\xymatrix{ J\circ_{\Cal{O}}(X) \ar@<.5 ex>[r] \ar@<-.5ex>[r] &
    J\circ_{\Cal{O}}J\circ_{\Cal{O}}(X) \ar@<1 ex>[r] \ar[r] \ar@<-1 ex>[r] &
    J\circ_{\Cal{O}}J\circ_{\Cal{O}}J\circ_{\Cal{O}}(X) \cdots }\right) \nonumber
\end{align} 
Coface maps $d^i$ in (\ref{TQ-res-1}) are induced by inserting $\Cal{O}\to J$ at the $i$-th position, i.e., \[  J \circ_{\Cal{O}} \cdots \circ_{\Cal{O}} J \cong J \circ_{\Cal{O}} \cdots \circ_{\Cal{O}} \Cal{O} \circ_{\Cal{O}} \cdots \circ_{\Cal{O}} J \to J \circ_{\Cal{O}} \cdots \circ_{\Cal{O}} J \circ_{\Cal{O}} \cdots \circ_{\Cal{O}} J\] and codegeneracy maps $s^j$ are induced by $J\circ_{\Cal{O}} J\to J\circ_J J\cong J$ at the $j$-th position. 

\begin{remark} \label{TQ-completion} The totalization of the diagram (\ref{TQ-res-1}) above is called the $\TQ$-completion of an $\Cal{O}$-algebra $X$, defined by \[X_{\TQ}^{\wedge}:=\Tot C(X) \simeq \holim_{\Delta} C(X).\]
It is known that $X\simeq X_{\TQ}^{\wedge}$ for any $0$-connected $\Cal{O}$-algebra $X$ (see, e.g., \cite{Ching-Harper-BM}).
\end{remark}

\subsection{Cubical diagrams} \label{sec:cubical-diags}
Let $\Cal{P}(n)$ denote the poset of subsets of the set $\{1,\dots,n\}$. A functor $\Cal{Z}\colon \Cal{P}(n)\to \mathsf{C}$ is called an \textit{$n$-cube in $\mathsf{C}$} or also an \textit{$n$-cubical diagram}. We use the following notation $\Cal{P}_0(n):=\Cal{P}(n)\setminus \{\emptyset\}$ and $ \Cal{P}_1(n):=\Cal{P}(n)\setminus \{\{1,\dots,n\}\}$ and refer to diagrams shaped on either $\Cal{P}_0(n)$ or $\Cal{P}_1(n)$ as \textit{punctured $n$-cubes}. The \textit{total homotopy fiber} of an $n$-cube $\Cal{Z}$, denoted $\tohofib \Cal{Z}$, is defined to be the homotopy fiber of the natural comparison map $\chi_0\colon \Cal{Z}(\emptyset)\to \holim_{\Cal{P}_0(n)} \Cal{Z}$. If the comparison $\chi_0$ is a weak equivalence (resp. $k$-connected) we say that $\Cal{Z}$ is \textit{homotopy cartesian} (resp. $k$-cartesian). 

Dually, the \textit{total homotopy cofiber} of $\Cal{Z}$ is the homotopy cofiber of the comparison map $\chi_1\colon\hocolim_{\Cal{P}_1(n)} \Cal{Z} \to \Cal{Z}(1,\dots,n)$ which we denote by $\tohocofib \Cal{Z}$. If $\chi_1$ is a weak equivalence (resp. $k$-connected) we say that $\Cal{Z}$ is \textit{homotopy cocartesian} (resp. $k$-cocartesian). We note that the total homotopy fiber (resp. cofiber) of a cube may be calculated by iterated homotopy fibers (resp. cofibers), see e.g.,  \cite[3.2]{BJM-cross}.

\begin{example}[Coface $n$-cube]
    Let $\Cal{Z}^{-1}\xrightarrow{d^0} \Cal{Z}^{\bullet}$ be a coaugmented cosimplicial object. There are associated coface $n$-cubes $\Cal{Z}_n$ whose subfaces encode the relation on coface maps (see, e.g.,  Ching-Harper \cite[\textsection 2.3]{Ching-Harper-Koszul}). We demonstrate $\Cal{Z}_2$ and $\Cal{Z}_3$ below \[ \xymatrix{ \Cal{Z}^{-1} \ar@{.>}[rr]^-{d^0} \ar@{.>}[dd]^-{d^0} && \Cal{Z}^0 \ar[dd]^-{d^0} \\ && \\ \Cal{Z}^0 \ar[rr]^-{d^1} && \Cal{Z}^1 }\hspace{1.5cm}
     \xymatrix{ 
     \Cal{Z}^{-1} \ar@{.>}[rr]^-{d^0} \ar@{.>}[dd]^-{d^0} \ar@{.>}[dr]^-{d^0} & & \Cal{Z}^0\ar[dr]^-{d^0} \ar[dd]|\hole^<(.7){d^0} && \\
     & \Cal{Z}^0\ar[rr]^<(.2){d^1} \ar[dd]_<(.2){d^1} && \Cal{Z}^1 \ar[dd]^-{d^1} \\
     \Cal{Z}^0 \ar[rr]|\hole^<(.7){d^1} \ar[dr]^-{d^0} && \Cal{Z}^1 \ar[dr]^-{d^0} & \\
     & \Cal{Z}^1 \ar[rr]^-{d^2} && \Cal{Z}^2 }\]
\end{example}

\subsection{Higher stabilization for $\Cal{O}$-algebras}
\label{delta-leq-k}
For $k\geq 0$, let $\bdel^{\leq k}$ denote the full subcategory of $\bdel$ comprised of sets $[\ell]\in \bdel$ for $\ell\leq k$ (note $\bdel^{\leq -1}=\emptyset$). There are inclusions of categories \[\emptyset=\bdel^{\leq -1} \to \bdel^{\leq 0} \to \bdel^{\leq 1} \to \dots \to \bdel^{\leq k}\to \cdots \to \bdel\] and moreover $\holim_{\bdel} Y$ may be computed as limit of the tower $\{\holim_{\bdel^{\leq k}} Y\}$ (see, e.g.,  \cite[\textsection 8.11]{Ching-Harper-Koszul} for a detailed write-up). There is a natural homotopy left cofinal inclusion  $\Cal{P}_0(n)\to \bdel^{\leq n-1}$ which, in particular, allows us to model the comparison $X\to \holim_{\bdel^{\leq n-1}} C(X)$ via the map $\chi_0$ (see Section \ref{sec:cubical-diags}) for the coface $n$-cube associated to $X\to C(X)$. 

By careful examination of the connectivities of these maps, Blomquist is able to obtain the following strong convergence estimates as a corollary to \cite[7.1]{Blom} (see also Dundas \cite{Dundas-1} and Dundas-Goodwillie-McCarthy \cite{DGM}).

\begin{proposition}
\label{higher-stabilization}
    Let $\Cal{O}$ be an operad in $\Spt$ whose entries are $(-1)$-connected, $X \in \Alg{O}$ $k$-connected, and $C(X)$ as in (\ref{TQ-completion}). Then, for any $n\geq 0$ the induced map $X\to \holim_{\bdel^{\leq n-1}}C(X)$ is $(k+1)(n+1)$-connected.
\end{proposition}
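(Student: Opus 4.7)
The proposition is stated as a corollary of Blomquist's higher stabilization theorem \cite[7.1]{Blom}, so the plan is to explain the reduction from the cosimplicial statement to the cubical cartesianness estimate that Blomquist proves.

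First, I would replace the homotopy limit over $\bdel^{\leq n-1}$ with a homotopy limit over the punctured cube $\Cal{P}_0(n)$. As recalled in Section \ref{delta-leq-k}, there is a homotopy left cofinal inclusion $\Cal{P}_0(n) \to \bdel^{\leq n-1}$ which identifies the comparison $X \to \holim_{\bdel^{\leq n-1}} C(X)$ with the cube comparison $\chi_0 \colon X \to \holim_{\Cal{P}_0(n)} \Cal{Z}_n$, where $\Cal{Z}_n$ is the punctured $n$-cube extracted from the coface $n$-cube associated to the coaugmentation $X \to C(X)$. The desired estimate is therefore equivalent to showing that $\Cal{Z}_n$ is $(k+1)(n+1)$-cartesian.

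Next, I would analyze the building blocks of $\Cal{Z}_n$. Each edge of the coface cube is, up to permutation, a unit map of the stabilization adjunction $Z \to UQZ$ applied to some iterate $Z = (UQ)^i X$, arising by inserting the operad map $\Cal{O}\to J$ at one of the $J\circ_\Cal{O} \cdots \circ_\Cal{O} J$ slots. The bar-construction model $UQZ \simeq |\Barr(J,\Cal{O},Z)|$, together with the fact that $J\simeq \tau_1\Cal{O}$ so that $\Cal{O}\to J$ is a weak equivalence at level $1$ and kills levels $\geq 2$, shows that the homotopy cofiber of $Z\to UQZ$ is controlled by the summands $\Cal{O}[m]\wedge_{\Sigma_m} Z^{\wedge m}$ for $m\geq 2$. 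For $Z$ that is $k$-connected, each such summand is at least $2(k+1)-1$ connected, using the $\Sigma$-cofibrancy and $(-1)$-connectivity hypotheses on $\Cal{O}$ listed in Section \ref{cofibrancy}; this provides the base $(k+1)$-increment in connectivity that the argument needs.

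Finally, the estimate is propagated to the full $n$-cube by iteratively taking homotopy fibers in each coordinate direction of $\Cal{Z}_n$ and assembling the resulting connectivity bounds via a generalized Blakers--Massey style argument. This iterative assembly is the technical content of Blomquist's \cite[Theorem 7.1]{Blom}, building on the approach of Dundas--Goodwillie--McCarthy \cite{DGM}, and the main obstacle I anticipate is precisely the bookkeeping required to see that each of the $n+1$ coordinate directions of $\Cal{Z}_n$ contributes an additional $(k+1)$ of connectivity (as opposed to a weaker linear-in-$k$ estimate). With Blomquist's theorem in hand, however, the $(k+1)(n+1)$-cartesianness of $\Cal{Z}_n$ translates, via the cofinality reduction above, to the desired $(k+1)(n+1)$-connectivity of the comparison map $X\to \holim_{\bdel^{\leq n-1}} C(X)$.
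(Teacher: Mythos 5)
Your proposal matches the paper's treatment: the paper gives no independent proof of this proposition, deriving it exactly as you do by using the homotopy left cofinal inclusion $\Cal{P}_0(n)\to\bdel^{\leq n-1}$ to identify the comparison map with $\chi_0$ for the coface $n$-cube, and then citing Blomquist \cite[7.1]{Blom} (with Dundas and Dundas--Goodwillie--McCarthy) for the $(k+1)(n+1)$-cartesianness estimate. Your additional remarks on the connectivity of the cofiber of $Z\to UQZ$ via the summands $\Cal{O}[m]\wedge_{\Sigma_m}Z^{\wedge m}$, $m\geq 2$, are correct and consistent with the hypotheses in Section \ref{cofibrancy}, but the substantive content remains deferred to Blomquist's theorem in both accounts.
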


These estimates show, in particular, if $X$ is $0$-connected then $X\xrightarrow{\sim} X_{\TQ}^{\wedge}$ (see also Ching-Harper \cite{Ching-Harper-BM}). 

\section{Functor calculus and Goodwillie derivatives in $\Alg{O}$}
Functor calculus was introduced by Goodwillie in a landmark series of papers \cite{Goo-Calc1, Goo-Calc2,Goo-Calc3} as a means of analyzing homotopy functors to or from $\Top$ or $\Spt$. Since, the theory been recognized as a general phenomenon which, in particular, relates a suitable model category to its stabilization. We refer the reader to \cite{Handbook} for an overview and exposition of some recent applications of the theory. 

In this document we will only consider functors of structured ring spectra described as algebras over a reduced operad $\Cal{O}$ in $\Spt$. We refer the reader to Pereira \cite{Per-thesis} for a more detail on functor calculus in categories of structured ring spectra.

\subsection{The Taylor tower}
A central construction in functor calculus is that of the \textit{Taylor tower} (sometimes referred to also as the \textit{Goodwillie tower}) of $n$-excisive approximations associated to a  functor $F\colon \Alg{O}\to\Alg{O}$ as follows
\begin{equation}\label{Taylor-tower}
    \xymatrix{
    & & D_nF\ar[d] & &&\\
    F\ar[r] & \cdots \ar[r] & P_nF \ar[r] & P_{n-1}F \ar[r] & \cdots \ar[r] & P_0F.}
\end{equation}

The functor $P_nF$ is called the \textit{$n$-th excisive approximation} to $F$ and is initial in the homotopy category of $n$-excisive functors receiving a map from $F$. In this work, all of our approximations are based at the zero object $\pt \in \Alg{O}$. The functor $D_n F$ is called the \textit{$n$-th homogeneous layer} and is defined as \[D_nF :=\hofib(P_nF\to P_{n-1}F).\]

Note that $P_0F$ is a constant functor taking value $F(\pt)$. We call $F$ \textit{reduced} if $F(\pt)\simeq \pt$ and note that for reduced functors we have $P_1F\simeq D_1F$. We refer the reader to \cite[\textsection 3]{Goo-Calc2} for the definition and overview of the theory of $n$-excisive functors; though remark that such functors share similar properties as the $n$-th Taylor polynomial associated to a function from calculus of one variable.

\subsection{Analytic functors}  If $F$ satisfies additional connectivity conditions on certain cubical diagrams (e.g.,  if $F$ is suitably stably $n$-excisive for all $n$ as in \cite[4.1]{Goo-Calc2}) we call $F$ \textit{analytic}, or more specifically \textit{$\rho$-analytic}: a key feature being that an analytic functor $F$ may be recovered as the homotopy limit of the tower (\ref{Taylor-tower}) on $\rho$-connected inputs $X$, i.e., \[F(X)\simeq \holim_{n} P_n F(X).\] 

For instance, the identity functor on $\Top$ is $1$-analytic by the higher Blakers-Massey theorems (see, e.g.,  \cite[\textsection 2]{Goo-Calc2}) and the analogous results for structured ring spectra of Ching-Harper \cite{Ching-Harper-BM} demonstrate that the identity functor on $\Alg{O}$ is $0$-analytic. 

\subsection{Cross effects and derivatives}
Let $\Cal{S}_n(X_1,\dots,X_n)$ denote the $n$-cube \[\text{$T\mapsto \bigvee_{t\notin T} X_t$, for $T\in \Cal{P}(n)$.}\] The \textit{$n$-th cross effect} of $G$ is the $n$-variable functor defined by \[\cro_n G(X_1,\dots,X_n):= \tohofib G( \Cal{S}_n(X_1,\dots, X_n)).\]

Our work concerns the derivatives of a functor $F$, which are certain spectra which classify the homogeneous layers $D_nF$ (under some mild conditions on $F$) and are computable via cross effects. We recall first that a functor $G$ is \textit{$n$-homogeneous} if $G$ is $n$-excisive and $P_k G\simeq \pt$ for $k<n$ and that $G$ is \textit{finitary} if $G$ commutes with filtered homotopy colimits. 

A major triumph of functor calculus is the classification of $n$-homogeneous functors. Proposition \ref{prop:functor-calc-recap} below is summarized from Goodwillie \cite{Goo-Calc3} (for functors of spaces) and Pereira \cite{Per-thesis} (for functors of $\Cal{O}$-algebras) and highlights the relevant properties of the homogeneous layers $D_nF$ and derivatives $\partial_n F$ associated to a functor $F$. For notational convenience we let $\widetilde{\TQ}$ denote the composite $g_* \TQ \simeq \tau_1\Cal{O} \circ^{\mathsf{h}}_{\Cal{O}}(-)$.

\begin{proposition}
\label{prop:functor-calc-recap}
Let $F\colon \Alg{O}\to \Alg{O}$ be a homotopy functor, $X\in \Alg{O}$, and $n\geq 1$. Then: \begin{enumerate}[label=(\roman*)]
    \item $D_nF$ is \textit{$n$-homogeneous}.

    \item There are $n$-homogeneous functors $\mathbb{D}_n F$ and $\widetilde{\mathbb{D}_n}F$ such that the following diagram commutes
    \begin{equation} \label{eq:fact}
    \xymatrix{
    \Alg{O} \ar[r]^{Q} \ar[d]^-{D_nF} &
    \mathsf{Alg}_{J} \ar[r]^{g_*} \ar[d]^-{\mathbb{D}_n F} &
    \mathsf{Mod}_{\Cal{O}[1]}  \ar[d]^-{\widetilde{\mathbb{D}_n}F} \\
    \Alg{O}  &
    \mathsf{Alg}_{J} \ar[l]^{U}  &
    \mathsf{Mod}_{\Cal{O}[1]} \ar[l]^{g^*}
    }  
\end{equation}

    \item There is a $(J,J)$-bimodule $\partial_*F$, whose $n$-th entry $\partial_n F$ is called the \textit{$n$-th Goodwillie derivative of $F$}, and such that there are equivalences of underlying spectra \begin{equation*}\label{homog-layer-2}
    D_n F(X) \simeq  i_n (\partial_* F) \circ^{\mathsf{h}}_J (\TQ(X)) \end{equation*} 
    
    \item $D_nF$ is characterized by an $(\Cal{O}[1],\Cal{O}[1]^{\wr n})$-bimodule\footnote{That is, a left module over $\Cal{O}[1]$ and right module over $\Cal{O}[1]^{\wr n}$
        (see Definition \ref{rem:wreath-products} for the definition of the wreath product $\Cal{O}[1]^{\wr n}$)
    } $\widetilde{\partial_n}F$ which has underlying spectrum equivalent to that of $\partial_nF$. 
    
    \item There are equivalences of underlying spectra \begin{equation}\label{homog-layer-1}
    D_n F(X) \simeq (\widetilde{\partial_n} F \wedge^{\mathsf{L}}_{\Cal{O}[1]^{\wedge n}} \widetilde{\TQ}(X)^{\wedge^{\mathsf{L}} n})_{h\Sigma_n}
    \simeq \widetilde{\partial_n} F\wedge^{\mathsf{L}}_{\Cal{O}[1]^{\wr n}} \widetilde{\TQ}(X)^{\wedge^{\mathsf{L}} n}. \end{equation}

    \item The $n$-th derivative may be calculated via $n$-th cross effects $\cro_n$ as \[\partial_n F \simeq \widetilde{\partial_n}F\simeq \cro_n \widetilde{\mathbb{D}_n} F (\Cal{O}[1],\dots, \Cal{O}[1])\] with right $\Cal{O}[1]^{\wr n}$-action granted by permuting the inputs.
\end{enumerate}
\end{proposition}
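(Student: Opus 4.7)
The plan is to prove this by assembling known classifications of $n$-homogeneous functors, adapted from Goodwillie \cite{Goo-Calc3} to the structured ring spectra setting via Pereira \cite{Per-thesis}, and then tracking the various module structures carefully. Because the statement is a summary proposition recapping established machinery, the main task is to identify the precise references and verify that the diagram \eqref{eq:fact} commutes up to the indicated natural equivalence with the stated bimodule structures intact.

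First I would handle parts (i) and (ii) together. Part (i) is immediate from the definition $D_nF:=\hofib(P_nF\to P_{n-1}F)$ together with standard properties of the Taylor tower, namely that $P_nF$ is $n$-excisive and $D_nF$ kills $P_kF$ for $k<n$ by construction. For (ii), the key point is the classification of finitary $n$-homogeneous functors: any such functor factors, up to natural equivalence, through the stabilization adjunction, since an $n$-homogeneous functor $G$ satisfies $G(X)\simeq G\bigl(\Omega^{\infty}\Sigma^{\infty}X\bigr)$ in a suitable sense. In the $\Cal{O}$-algebra setting this translates, via the equivalence $\mathsf{Sp}(\Alg{O})\simeq \mathsf{Mod}_{\Cal{O}[1]}$ of Section \ref{sec:stab}, into the factorization diagram \eqref{eq:fact}; we simply define $\mathbb{D}_nF$ and $\widetilde{\mathbb{D}_n}F$ to be the induced functors and verify commutativity using the Quillen equivalence $(g_*,g^*)$ together with the fact that $U$ preserves the homotopy type of $n$-homogeneous data.

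Next I would address (iii)--(vi), which together record the classification of the homogeneous layers. For (iii), invoke Pereira's analogue of Goodwillie's classification theorem: every finitary $n$-homogeneous functor on $\Alg{O}$ is of the form $X\mapsto C\circ^{\mathsf{h}}_J (\TQ(X))$ concentrated at level $n$, for some spectrum $C$ with appropriate equivariance; one then defines $\partial_nF$ to be that classifying spectrum, and assembles these into a $(J,J)$-bimodule $\partial_*F$ using the natural transformations between adjacent layers of the Taylor tower. For (iv), pass through the Quillen equivalence $(g_*,g^*)$ to replace $J$-module data by $\Cal{O}[1]$-module data and observe that restricting to the $n$-th level converts $J$-bimodule structure on $\partial_*F$ into $(\Cal{O}[1],\Cal{O}[1]^{\wr n})$-bimodule structure on $\widetilde{\partial_n}F$, since the right action on $n$-ary operations is naturally indexed by the wreath product. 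For (v), the two equivalences are then immediate: the first is the standard rewriting of a coinvariant expression as a left action along the wreath product, and the second is formal manipulation of $\widetilde{\TQ}$ as the derived stabilization. For (vi), the identification of $\partial_nF$ with $\cro_n \widetilde{\mathbb{D}_n}F(\Cal{O}[1],\dots,\Cal{O}[1])$ follows from evaluating the classified form of $D_nF$ at the $n$-tuple $(\Cal{O}[1],\dots,\Cal{O}[1])$ and noting that $\widetilde{\TQ}(\Cal{O}[1])\simeq \Cal{O}[1]$, so the total homotopy fiber of the cross-effect cube extracts precisely the multilinearization coefficient.

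The main obstacle I expect is bookkeeping the equivariance in (iv) and (v): specifically, verifying that the $(J,J)$-bimodule structure on $\partial_*F$ coming from the Taylor tower is compatible with the $(\Cal{O}[1],\Cal{O}[1]^{\wr n})$-bimodule structure on $\widetilde{\partial_n}F$ coming from cross-effect symmetry, and that the two expressions in \eqref{homog-layer-1} agree under the translation between left $\Cal{O}[1]^{\wr n}$-actions and $\Sigma_n$-coinvariants of the $n$-fold smash power. Everything else is essentially a translation of Goodwillie's classification into the $\Cal{O}$-algebra framework, already carried out in \cite{Per-thesis}, and I would cite those results rather than redevelop them.
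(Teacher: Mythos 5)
Your proposal matches the paper's treatment: the paper gives no proof of this proposition at all, explicitly presenting it as a summary ``summarized from Goodwillie \cite{Goo-Calc3} (for functors of spaces) and Pereira \cite{Per-thesis} (for functors of $\Cal{O}$-algebras),'' which is precisely the citation-and-bookkeeping strategy you outline. Your sketch of how the classification of $n$-homogeneous functors, the $(g_*,g^*)$ Quillen equivalence, and the wreath-product equivariance fit together is consistent with how these facts are used later in the paper (e.g., in Proposition \ref{prop:tay-tow-FM}).
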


\begin{remark} 
    The above equivalence (\ref{homog-layer-1}) hold in general for finite cell $\Cal{O}$-algebras $X$ and, if $F$ further is \textit{finitary} (i.e., $F$ commutes with filtered homotopy colimits), then the equivalences may be extended to arbitrary $\Cal{O}$-algebras $X$. The notation $\wedge^{\mathsf{L}}$ and $\circ^{\mathsf{h}}$ denote the derived smash product and circle product, respectively. We will often omit the latter notation and understand our constructions to be implicitly derived.\end{remark}

    The careful reader might note that the $n$-th Goodwillie derivative of $F$ is only defined up to weak equivalence, and so the choice $\partial_n F$ vs. $\widetilde{\partial_n}F$ may seem a pedantic distinction. For our purposes, this distinction is beneficial to the readibility of several of the upcoming proofs. Further, there are equivalences \[ \mathsf{L} g_* \partial_n F \simeq \widetilde{\partial_n} F \text{ and } \partial_n F \simeq \mathsf{R} g^* \widetilde{\partial_n}F, \] and for concreteness, the model for the derivatives of the identity we employ is as a $(J,J)$-bimodule, $\Tot C(\Cal{O})$ (see (\ref{al:CO})).

    Of note is that the choice of $\mathbb{D}_nF$ (resp. $\widetilde{\mathbb{D}_n}F$) may be made functorial in $F$ by a straightforward modification of the argument presented in \cite[2.7]{AC-1}. In particular if $F$ is finitary, then for any $Y\in \mathsf{Mod}_{\Cal{O}[1]}$ we have \begin{equation} \label{eq:D_nF(Y)} \widetilde{\mathbb{D}_n} F(Y)\simeq \widetilde{\partial_n} F\wedge_{\Cal{O}[1]^{\wr n}} Y^{\wedge n}.\end{equation}

\subsection{A note on wreath products}
We use $\Cal{O}[1]^{\wr n}$ to denote the twisted group ring (i.e., \textit{wreath product}) $(\Sigma_n)_+\wedge \Cal{O}[1]^{\wedge n}$. We recall some pertinent details of wreath products of ring spectra below.

\begin{definition} \label{rem:wreath-products}
    Given a ring spectrum $\Cal{R}$ we define \[
        \Cal{R}^{\wr n}:= \Sigma_n {\cdot} \Cal{R}^{\wedge n}\cong (\Sigma_n)_+\wedge \Cal{R}^{\wedge n}\]
    with multiplication given by \[(\sigma \wedge x) \wedge (\tau \wedge y)\mapsto \sigma\tau \wedge x\sigma(y).\] 
\end{definition}

Our main use of such objects stems from the following proposition (see also \cite[Lemma 14]{Law-1}, \cite[\textsection 2]{KuhPer}). Note that a right $\Cal{R}^{\wr n}$-module is a (right) $\Sigma_n$ object via the unit map $I\to \Cal{R}$. 

\begin{proposition} \label{eq:wreath-product-coeq}
    Let $\Cal{R}$ be a ring spectrum, $X$ a left $\Cal{R}$-module and $M$ a right $\Cal{R}$-module with $n$ commuting actions of $\Cal{R}$ (i.e., right $\Cal{R}^{\wedge n}$-module). Then, there is an isomorphism \begin{equation*} 
        (M \wedge_{\Cal{R}^{\wedge n}} X^{\wedge n})_{\Sigma_n}\cong M\wedge_{\Cal{R}^{\wr n}} X^{\wedge n}.
    \end{equation*}
\end{proposition}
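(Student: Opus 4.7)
The plan is to verify this isomorphism by checking that both sides corepresent the same functor from spectra to sets, so the content reduces to reorganizing an $\Cal{R}^{\wr n}$-balancing condition into an $\Cal{R}^{\wedge n}$-balancing condition together with a $\Sigma_n$-equivariance condition. First I would fix a test spectrum $Z$ and compute the hom-sets out of each side.

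For the right-hand side, the defining coequalizer for $\wedge_{\Cal{R}^{\wr n}}$ identifies maps $M \wedge_{\Cal{R}^{\wr n}} X^{\wedge n} \to Z$ with maps $f \colon M \wedge X^{\wedge n} \to Z$ that are $\Cal{R}^{\wr n}$-balanced. Using the decomposition $\Cal{R}^{\wr n} = (\Sigma_n)_+ \wedge \Cal{R}^{\wedge n}$ together with the multiplication formula $(\sigma \wedge x)(\tau \wedge y) = \sigma\tau \wedge x\cdot\sigma(y)$ from Definition \ref{rem:wreath-products}, testing balancing against generators of the form $(e \wedge r)$ for $r \in \Cal{R}^{\wedge n}$ and $(\sigma \wedge 1)$ for $\sigma \in \Sigma_n$ shows that $\Cal{R}^{\wr n}$-balancing splits into two independent conditions: (i) $f$ is $\Cal{R}^{\wedge n}$-balanced, and (ii) $f$ intertwines the diagonal $\Sigma_n$-action that combines the $\Sigma_n$-factor of the right $\Cal{R}^{\wr n}$-action on $M$ with the permutation action on $X^{\wedge n}$.

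For the left-hand side, a map $(M \wedge_{\Cal{R}^{\wedge n}} X^{\wedge n})_{\Sigma_n} \to Z$ is precisely a $\Sigma_n$-invariant map out of $M \wedge_{\Cal{R}^{\wedge n}} X^{\wedge n}$, which by the universal property of $\wedge_{\Cal{R}^{\wedge n}}$ is exactly an $\Cal{R}^{\wedge n}$-balanced, $\Sigma_n$-equivariant map $M \wedge X^{\wedge n} \to Z$, i.e., the data described in (i)--(ii) above. Naturality in $Z$ is automatic, so Yoneda furnishes the desired isomorphism.

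The one point that will need care is setting up the $\Sigma_n$-action on $M \wedge_{\Cal{R}^{\wedge n}} X^{\wedge n}$ correctly: the permutation action on $X^{\wedge n}$ alone does not descend through the $\Cal{R}^{\wedge n}$-coequalizer because it twists the left $\Cal{R}^{\wedge n}$-action by $\sigma$, so one must absorb this twist using the $\Sigma_n$-part of the $\Cal{R}^{\wr n}$-action on $M$ to obtain a well-defined diagonal action. Once this is established the rest of the argument is a direct symbol-level verification, and in particular no cofibrancy hypotheses should be needed since the claim is a strict (underived) isomorphism.
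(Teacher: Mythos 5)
Your argument is correct. The paper itself does not prove this proposition — it is stated with citations to Lawson and to Kuhn--Pereira — so there is no in-text proof to compare against; your corepresentability argument is the standard one and it works. The decomposition of $\Cal{R}^{\wr n}$-balancing into $\Cal{R}^{\wedge n}$-balancing plus $\Sigma_n$-equivariance follows from testing against the generators $(e\wedge r)$ and $(\sigma\wedge 1)$ exactly as you say, and you correctly flag the only delicate point: the diagonal $\Sigma_n$-action on $M\wedge_{\Cal{R}^{\wedge n}} X^{\wedge n}$ descends through the balancing coequalizer precisely because of the relation $(e\wedge r)(\sigma\wedge 1)=(\sigma\wedge 1)(e\wedge \sigma^{-1}(r))$ in the twisted group ring, which trades the permutation twist on $X^{\wedge n}$ against the $\Sigma_n$-part of the action on $M$. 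Your remark that no cofibrancy is needed is also right, since the statement is a point-set isomorphism of coequalizers and colimits commute with colimits.
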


\begin{remark}The right-hand equivalence of (\ref{homog-layer-1}) is an instance of this equivalence. Of note is that if $X$ is a cofibrant $\Cal{O}$-algebra, then $\TQ(X)$ is cofibrant in $\mathsf{Mod}_{\Cal{O}[1]}$ and therefore Proposition \ref{eq:wreath-product-coeq} provides that $\TQ(X)^{\wedge n}$ is a cofibrant as a left $\Cal{O}[1]^{\wr n}$-module. 
\end{remark}

In addition, the $(\Cal{O}[1], \Cal{O}[1]^{\wr n})$-bimodule structure on the derivatives $\widetilde{\partial_n} F$ for all $n\geq 1$ induces $(\tau_1\Cal{O}, \tau_1\Cal{O})$-bimodule structure on the symmetric sequence $\widetilde{\partial_*} F$ which is further compatible with the $(J,J)$-bimodule structure on $\partial_*F$ via the $(g_*,g^*)$ adjunction. In the simplified case that $\Cal{O}[1]\cong S$, an $(S,S^{\wr n})$-bimodule is just a spectrum with a right action by $\Sigma_n$ and (\ref{homog-layer-1}) reduces to an equivalence of underlying spectra \[D_n F(X)\simeq \widetilde{\partial_n}F\wedge_{\Sigma_n} \widetilde{\TQ}(X)^{\wedge n}.\]

\subsection{Taylor towers of certain functors $\Alg{O}\to \Alg{O}$}
Let $M$ be a cofibrant $(\Cal{O}, \Cal{O})$-bimodule with $M[0]=\pt$ whose terms are $(-1)$-connected. We define a functor $F_M\colon \Alg{O}\to \Alg{O}$ at $X\in \Alg{O}$ by the simplicial bar construction \begin{equation} 
    F_M(X) = |\Barr(M,\Cal{O},X)| \simeq M \circ^{\mathsf{h}}_{\Cal{O}}(X).\end{equation}

Note $F_M$ is finitary and the left $\Cal{O}$ action on $M$ induces a left $\Cal{O}$ action on $F_M(X)$. The following proposition may be summarized from Harper-Hess \cite{Har-Hess} and Kuhn-Pereira \cite[\textsection 2.7]{KuhPer} and further provides a model for the Taylor tower of functors $F_M$. For completion, we sketch proofs of the relevant details. 

\begin{proposition} \label{prop:tay-tow-FM}
    Let $M$ and $F_M$ be as described above. Then there are equivalences (natural in $M$) \begin{enumerate}[label=(\roman*) ]
        \item $P_n F_M\simeq \tau_n M \circ^{\mathsf{h}}_{\Cal{O}} (-)$
        
        \item $D_n F_M\simeq i_n M\circ^{\mathsf{h}}_{\Cal{O}}(-) \simeq i_n M \circ^{\mathsf{h}}_J (\TQ(-))$ 
        
        \item $\widetilde{\mathbb{D}_n} F_M (-)\simeq M[n] \wedge^{\mathsf{L}}_{\Cal{O}[1]^{\wr n}} (-)^{\wedge n}$
        
        \item $\widetilde{\partial_n} F_M \simeq M[n]$
    \end{enumerate}
    such that the Taylor tower for $F_M$ is equivalent to \[\label{eq:taylor-tower-FM-AlgOO}
    \xymatrix{
    && i_n M \circ^{\mathsf{h}}_{\Cal{O}}(-) \ar[d] & & 
    \\
    F_M\ar[r] & 
    \cdots \ar[r] & 
    \tau_n M \circ^{\mathsf{h}}_{\Cal{O}} (-) \ar[r] & 
    \tau_{n-1} M\circ^{\mathsf{h}}_{\Cal{O}}(-) \ar[r] & \cdots \ar[r] & \tau_1M \circ^{\mathsf{h}}_{\Cal{O}}(-).
    }\]
\end{proposition}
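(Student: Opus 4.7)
The plan is to work backward from the derivatives: first establish (iv) that $\widetilde{\partial_n}F_M \simeq M[n]$ via a direct cross-effect calculation; next derive (iii) from the classification of $n$-homogeneous functors recalled in Proposition \ref{prop:functor-calc-recap}; identify the homogeneous layers (ii); and finally assemble the Taylor tower (i) via the truncation filtration on $M$.

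For (iv), I compute the $n$-th cross effect directly. By distributivity of the composition product over coproducts, $F_M(X_1\vee\cdots\vee X_n) \simeq M\circ^{\mathsf{h}}_{\Cal{O}}(X_1\vee\cdots\vee X_n)$ splits, after the bar construction, into multi-indexed pieces of the form $M[k]\wedge X_{i_1}\wedge\cdots\wedge X_{i_k}$ with the appropriate stabilizer acting. The total homotopy fiber $\cro_n F_M(X_1,\ldots,X_n)$ kills all terms in which some $X_t$ fails to appear, leaving only the multilinear piece. Combining this with the formula $\widetilde{\partial_n}F_M \simeq \cro_n \widetilde{\mathbb{D}_n} F_M(\Cal{O}[1],\ldots,\Cal{O}[1])$ from Proposition \ref{prop:functor-calc-recap}(vi) and simplifying via the unit of $\Cal{O}[1]$ identifies $\widetilde{\partial_n}F_M$ with $M[n]$ carrying its canonical $(\Cal{O}[1],\Cal{O}[1]^{\wr n})$-bimodule structure. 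Part (iii) then follows immediately from the finitariness of $F_M$ via the formula (\ref{eq:D_nF(Y)}).

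For (ii), I combine (iii) with Proposition \ref{prop:functor-calc-recap}(v) to obtain $D_n F_M(X) \simeq M[n]\wedge^{\mathsf{L}}_{\Cal{O}[1]^{\wr n}} \widetilde{\TQ}(X)^{\wedge n}$, which rewrites by Proposition \ref{eq:wreath-product-coeq} as $i_n M\circ^{\mathsf{h}}_J \TQ(X)$. The further identification with $i_n M\circ^{\mathsf{h}}_{\Cal{O}}(X)$ follows from the change-of-operad adjunction along $\Cal{O}\to J$, using that the right $\Cal{O}$-action on a symmetric sequence concentrated in a single level factors through $\tau_1\Cal{O}$ (since $\Cal{O}[0]=\pt$ forces higher-arity operations to leave any single level at the relevant arities). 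For (i), I show that $\tau_n M\circ^{\mathsf{h}}_{\Cal{O}}(-)$ is $n$-excisive and that the canonical map $F_M\to\tau_n M\circ^{\mathsf{h}}_{\Cal{O}}(-)$ satisfies the universal property of $P_n$. Excisiveness is proved by induction on $n$ using the homotopy fiber sequence $i_n M\to\tau_n M\to\tau_{n-1}M$ of $(\Cal{O},\Cal{O})$-bimodules: the base case $\tau_0 M\simeq\pt$ is trivial, and the inductive step combines the $(n-1)$-excisive functor $\tau_{n-1}M\circ^{\mathsf{h}}_{\Cal{O}}(-)$ with the $n$-homogeneous fiber $i_n M\circ^{\mathsf{h}}_{\Cal{O}}(-)\simeq D_n F_M$. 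For the universal property, the homotopy fiber of $F_M\to\tau_n M\circ^{\mathsf{h}}_{\Cal{O}}(-)$ is $K_n\circ^{\mathsf{h}}_{\Cal{O}}(-)$ for $K_n=\hofib(M\to\tau_n M)$, whose truncation filtration has layers $i_k M$ with $k>n$, each of which is $k$-homogeneous and hence annihilated by $P_n$.

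The main obstacle I anticipate is the cross-effect calculation of step (iv), where one must carefully commute the total homotopy fiber with the geometric realization of the bar construction and track the resulting $(\Cal{O}[1],\Cal{O}[1]^{\wr n})$-bimodule structure through the collapse of non-multilinear terms. A secondary subtlety lies in the convergence argument for (i): since $M$ need not be bounded, the kernel $K_n$ carries an infinite truncation filtration, and the $(-1)$-connectivity hypothesis on $\Cal{O}$ (hence on $M$) is what guarantees that the higher filtration layers contribute nothing to $P_n$ in the limit.
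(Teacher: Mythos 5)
Your parts (i) and (ii) are essentially the paper's argument (the paper proves $n$-excisiveness of $\tau_n M\circ^{\mathsf{h}}_{\Cal{O}}(-)$ by citing Pereira and handles the universal property by the connectivity estimate of Harper--Hess, exactly the "agrees to order $n$" argument you sketch; it identifies $D_nF_M$ with the fiber of $\tau_nM\to\tau_{n-1}M$ and passes to $i_nM\circ^{\mathsf{h}}_J(\TQ(-))$ using that the right $\Cal{O}$-action on $i_nM$ factors through $\tau_1\Cal{O}$). The problem is your ordering of (iii) and (iv), which as written is circular. The formula $\widetilde{\partial_n}F\simeq\cro_n\widetilde{\mathbb{D}_n}F(\Cal{O}[1],\dots,\Cal{O}[1])$ from Proposition \ref{prop:functor-calc-recap}(vi) takes the cross effect of $\widetilde{\mathbb{D}_n}F_M$, a functor on $\Mod{\Cal{O}[1]}$ --- so to use it you must already have a concrete handle on $\widetilde{\mathbb{D}_n}F_M$, which is precisely item (iii); but you propose to deduce (iii) from (iv) via (\ref{eq:D_nF(Y)}). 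The paper breaks this loop by going in the opposite direction: first (i) and (ii) by direct means, then (iii) is read off from the explicit formula for $D_nF_M$ (namely $i_nM\circ_{\tau_1\Cal{O}}(Y)\simeq M[n]\wedge_{\Cal{O}[1]^{\wr n}}Y^{\wedge n}$), and only then is (iv) computed as a cross effect of the now-explicit $\widetilde{\mathbb{D}_n}F_M$, where the computation is elementary: one replaces $\cro_n$ by the co-cross-effect $\cro^n$ (these agree for stable targets), which commutes with $M[n]\wedge_{\Cal{O}[1]^{\wr n}}(-)$, reducing everything to $\cro_n((-)^{\wedge n})\simeq(\Sigma_n)_+\wedge(-)^{\wedge n}$.

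Your proposed escape hatch --- a direct computation of $\cro_nF_M$ at the level of $\Alg{O}$ --- does not work as stated. The cube $\Cal{S}_n(X_1,\dots,X_n)$ is built from coproducts in $\Alg{O}$, and the underlying spectrum of a coproduct of $\Cal{O}$-algebras is not the wedge of underlying spectra; $F_M(X_1\vee\cdots\vee X_n)\simeq M\circ^{\mathsf{h}}_{\Cal{O}}(X_1\vee\cdots\vee X_n)$ does not split into pieces of the form $M[k]\wedge X_{i_1}\wedge\cdots\wedge X_{i_k}$, but rather into multivariable derived circle products still glued over $\Cal{O}$. Even after the total homotopy fiber kills the terms missing some $X_t$, the surviving piece is not yet multilinear, and an additional multilinearization step is required to extract the derivative. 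This is exactly the difficulty the paper's detour through $\widetilde{\mathbb{D}_n}F_M$ on $\Mod{\Cal{O}[1]}$ (where coproducts \emph{are} wedges and the functor is literally a smash power with coefficients) is designed to avoid. To repair your write-up, either reverse the order to (i)--(ii)--(iii)--(iv) as in the paper, or supply the missing coproduct decomposition and multilinearization in $\Alg{O}$.
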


\begin{proof}
We will write $\circ$ for $\circ^{\mathsf{h}}$ and $\wedge$ for $\wedge^{\mathsf{L}}$. The equivalence (i) holds as $\tau_n M\circ_{\Cal{O}}(-)$ is $n$-excisive (see, e.g.,  \cite[4.3]{Per-thesis}) and by a connectivity argument (see \cite[1.14]{Har-Hess}). For (ii) we note that morphisms $\tau_n M \to \tau_{n-1} M$ give rise to the comparison maps on excisive approximations $P_n F_M\xrightarrow{q_n} P_{n-1} F_M$ and moreover the fiber sequence \[ i_n M \to \tau_n M\to \tau_{n-1} M\] identifies $i_n M\circ_{\Cal{O}} (-)$ with the fiber of $q_n$. Moreover, as the right $\Cal{O}$-action on $i_n M$ factors through $\tau_1\Cal{O}$ there are then equivalences of underlying spectra
\begin{align*} 
    D_n F_M(X) &\simeq (i_n M\circ_{\tau_1 \Cal{O}} \tau_1\Cal{O}) \circ_{\Cal{O}}(X) \\
               &\simeq i_n M  \circ_{\tau_1\Cal{O}} (\tau_1 \circ_{\Cal{O}}(X)) 
               \simeq i_n M \circ_J (\TQ(X)).
    \end{align*} Note that (iii) follows from the observation that any $Y\in \mathsf{Mod}_{\Cal{O}[1]}$ \[ 
        i_n M \circ_{\tau_1{\Cal{O}}} (Y)\simeq M[n] \wedge_{\Cal{O}[1]^{\wr n}} Y^{\wedge n}. \] 
    
The proof of (iv) follows from the equivalence $\cro_n F\simeq\cro^nF$ between cross-effects and \textit{co-cross-effects} of functors landing in a stable category as in Ching \cite{Ching-chain-rule} (see also McCarthy \cite{McC-dual}), where latter is defined dually to $\cro_n$ as follows \[\cro^n G(X_1,\dots,X_n)=\tohocofib\left(\Cal{P}(n)\ni T\mapsto G\left( \prod_{t\in T} X_t\right) \right).\]

In particular, taking co-cross-effects will commute with $\wedge_{\Cal{O}[1]^{\wr n}}$ and so 
    \[ \cro_n \widetilde{\mathbb{D}_n} F_M \simeq \cro_n (M[n]\wedge_{\Cal{O}[1]^{\wr n}} (-)^{\wedge n})\simeq  M[n]  \wedge_{\Cal{O}[1]^{\wr n}} \cro_n ((-)^{\wedge n}).\] 
Via the computation $\cro_n ((-)^{\wedge n})\simeq (\Sigma_n)_+ \wedge (-)^{\wedge n}$ we then obtain \[\widetilde{\partial_n}F_M \simeq M[n]\wedge_{\Cal{O}[1]^{\wr n}} \Cal{O}[1]^{\wr n}\simeq M[n].\]
\end{proof}

\begin{definition}
    For functors of the form $F_M$ we take as our models for $P_n F_M$, $D_n F_M$ and $\widetilde{\partial_n} F_M$ those from Proposition \ref{prop:tay-tow-FM}. A map $M\to M'$ of cofibrant $(\Cal{O}, \Cal{O})$-bimodules induces natural transformations $P_n F_M \to P_n F_{M'}$ and $D_n F_M\to D_n F_{M'}$, and also that $\widetilde{\partial_n} F_M \to \widetilde{\partial_n} F_{M'}$ is equivalent to $M[n]\to M'[n]$.
\end{definition}

\subsection{The Taylor tower of the identity on $\Alg{O}$}
Note that for $M=\Cal{O}$, the functor $F_{\Cal{O}}$ is equivalent to the identity via $\Cal{O} \circ_{\Cal{O}} (-)\simeq \Id_{\Alg{O}}$. Moreover, there are natural transformations $\Id_{\Alg{O}} \to \tau_n \Cal{O}\circ_{\Cal{O}}(-)$ provided by the unit map of the change of operads adjunction (Section \ref{change-of-Op}) applied to the map of operads $\Cal{O}\to \tau_n \Cal{O}$. The Taylor tower of the identity in $\Alg{O}$ then is equivalent to 
\begin{equation}\label{eq:taylor-tower-Id-AlgOO}
    \xymatrix{
    && i_n \Cal{O} \circ_{\Cal{O}}(-) \ar[d] & & 
    \\
    \Id_{\Alg{O}}\ar[r] & 
    \cdots \ar[r] & 
    \tau_n\Cal{O}\circ_{\Cal{O}} (-) \ar[r] & 
    \tau_{n-1}\Cal{O}\circ_{\Cal{O}}(-) \ar[r] & \cdots \ar[r] & \tau_1\Cal{O}\circ_{\Cal{O}}(-)
    }
\end{equation}

This tower (\ref{eq:taylor-tower-Id-AlgOO}) has previously been studied by Harper-Hess \cite{Har-Hess} in relation to homotopy completion of $\Cal{O}$-algebras (see also Kuhn \cite{Kuh} and McCarthy-Minasian \cite{MM}). Moreover, Ching-Harper provide $\Alg{O}$ analogues of the higher Blakers-Massey theorems in \cite{Ching-Harper-BM}  which in particular show that $\Id_{\Alg{O}}$ is $0$-analytic. That is, for $0$-connected $X$ the following comparison map is an equivalence \[X \to \holim_n \tau_n \Cal{O} \circ_{\Cal{O}}(X).\] 

As a corollary to Proposition \ref{prop:tay-tow-FM}, we obtain equivalences of underlying spectra (see also \cite{Har-Hess}) \[
    D_n \Id_{\Alg{O}}(X)\simeq i_n \Cal{O} \circ_{\Cal{O}}(X)\simeq \Cal{O}[n] \wedge_{\Cal{O}[1]^{\wr n}} \widetilde{\TQ}(X)^{\wedge n}\] 
and also observe that $\widetilde{\partial_n} \Id_{\Alg{O}}\simeq \Cal{O}[n]$ as a $(\Cal{O}[1], \Cal{O}[1]^{\wr n})$-bimodule for all $n\geq 1$. Therefore, with a view toward the operad structure on $\partial_*\Id_{\mathsf{Top}_*}$ constructed by Ching in \cite{Ching-thesis} we are lead to the following question, found in Arone-Ching \cite{AC-1}.

\subsection{Main question} \label{question} Is it possible to endow $\partial_* \Id_{\Alg{O}}$ with a naturally occurring operad structure such that $\partial_* \Id_{\Alg{O}}\simeq \Cal{O}$ \textit{as operads}?\\

A key idea to our approach is taken from Arone-Kankaanrinta \cite{AK} where they show that $\partial_* \Id_{\mathsf{Top}_*}$ may be better understood by utilizing the cosimplicial resolution from the stabilization adjunction $(\Sp,\U)$ by means of the Snaith splitting. Within the realm of $0$-connected $\Cal{O}$-algebras, the $(Q,U)$ adjunction between $\Alg{O}$ and $\mathsf{Alg}_J$ (the latter, recall, is Quillen equivalent to $\mathsf{Mod}_{\Cal{O}[1]}$) is the exact analogue of stabilization. We provide an $\Alg{O}$ analogue of the Snaith splitting in Section \ref{sec:Snaith-O}.

\section{A model for derivatives of the identity in $\Alg{O}$}
The aim of this section is to describe specifically the model for the derivatives of the identity we employ, as $\Tot$ of a certain cosimplicial symmetric sequence $C(\Cal{O})$ which may be motivated as the totalization of the cosimplicial object arising from a calculation of the $n$-th derivative of $(QU)^k$ via the Snaith splitting in $\Alg{O}$. We are further motivated by work of Arone-Kankaanrinta \cite{AK} which utilizes the Snaith splitting in spaces (\ref{eq:Snaith-top}) to provide a model for the derivatives of the identity in spaces. 

\subsection{The Snaith splitting} 
We first recall the Snaith splitting in $\Top$, that is, the existence of an equivalence (see, e.g.,  Snaith \cite{Snaith} or Cohen-May-Taylor \cite{Coh-May-Tay}) \begin{equation} \label{eq:Snaith-top}
    \Sp \U \Sp (X)\simeq \bigvee_{n \geq 1} \Sp X^{\wedge n}_{\Sigma_n}\cong \bigvee_{n\geq 1} S\wedge_{\Sigma_n}\Sp X^{\wedge n}
\end{equation} 
where $\Sigma_n$ acts on $S$ trivially. We interpret the above to mean that the Taylor tower for the associated comonad to the suspension adjunction, $\Sp\U$, splits on the image of $\Sp(-)$ as the coproduct of its homogeneous layers and moreover that $\partial_n (\Sp \U)\simeq S$ with trivial $\Sigma_n$-action. Via this splitting in spaces one obtains \[\partial_n (\U\Sp)^{k+1}\simeq \underline{S}^{\circ k}[n]\] where $\underline{S}$ denotes the reduced symmetric sequence with $\underline{S}[n]=S$ with trivial $\Sigma_n$ action. Furthermore, $\underline{S}$ inherits a natural cooperad structure and $\partial_* \Id_{\mathsf{Top}_*}$ is equivalent to the cobar construction on $\underline{S}$ (see \cite{AK}, \cite{Ching-thesis}).

\begin{remark} \label{rem:C(S)} In the sequel to this work \cite{Clark-2}, we describe a $\boxcirc$-monoid (see Definition \ref{boxcirc-monoid}) $C(\underline{S})\in \SymSeq_{\Spt}^{\bdel}$  whose totalization is equivalent to the cobar construction on $\underline{S}$. This allows for a new description of an operad structure on $\partial_* \Id_{\Top}$ using the methods from this document and addresses item (i) of Section \ref{sec:remark}.
\end{remark}

\subsection{The Snaith splitting in $\Alg{O}$}
\label{sec:Snaith-O}There is an analogous result for $\Cal{O}$-algebras, wherein the adjunction $(\Sp, \U)$ is replaced by $(Q, U)$ from (\ref{stab-of-O-alg}). Let $B(\Cal{O})$ be the $(J,J)$-bimodule \[B(\Cal{O})= J\circ^{\mathsf{h}}_{\Cal{O}}J\simeq |\Barr(J,\Cal{O},J)|\] and note that given $Y\in \mathsf{Alg}_J$ cofibrant there is a zig-zag of equivalences. 
\[ QU(Y) \xleftarrow{\sim} |\Barr(J, \Cal{O}, Y)| \xrightarrow{\sim} |\Barr(J, \Cal{O}, J)| \circ_J (Y)= B(\Cal{O}) \circ_J (Y).\]

The \textit{$\Alg{O}$ Snaith splitting} is then the equivalence \begin{equation}\label{eq:TQ-Snaith-splitting}
    Q U(Y)\simeq B(\Cal{O}) \circ_J (Y).
\end{equation}

\begin{remark} 
    At first blush, (\ref{eq:TQ-Snaith-splitting}) may not seem like a proper ``splitting'' in the style of (\ref{eq:Snaith-top}). This is more an artifact of our use of $\mathsf{Alg}_J$ for the stabilization of $\Alg{O}$. Indeed, given instead $\widetilde{Y}\in \mathsf{Mod}_{\Cal{O}[1]}$, the associated comonad arising from the adjunction $(g_*Q, Ug^*)$ between $\Alg{O}$ and $\mathsf{Mod}_{\Cal{O}[1]}$ has a natural splitting \[
    g_* Q Ug^* (\widetilde{Y})\simeq \bigvee_{k\geq 1} \widetilde{B}(\Cal{O})[k] \wedge_{\Cal{O}[1]^{\wr k}} \widetilde{Y}^{\wedge k}\]
    such that $\widetilde{B}(\Cal{O})= \tau_1\Cal{O} \circ^{\mathsf{h}}_{\Cal{O}} \tau_1 \Cal{O} \simeq |\Barr(\tau_1\Cal{O}, \Cal{O}, \tau_1\Cal{O})| \simeq |\Barr(J,\Cal{O},J)|\simeq B(\Cal{O})$. 
\end{remark}
    
\subsection{Cooperad structure on $B(\Cal{O})$} \label{B(O)} It is known that $B(\Cal{O})$ (resp. $\widetilde{B}(\Cal{O})$) is a coaugmented cooperad, at least in the homotopy category of spectra (see, e.g.,  Ching \cite{Ching-thesis} for the topological case, Lurie \cite[\textsection 5]{Lur} for an $\infty$-categorical approach, or Ginzburg-Kapranov \cite{Ginz-Kap} for the chain complexes case) via the natural comultiplication  \[ J \circ^{\mathsf{h}}_{\Cal{O}} J \simeq J \circ^{\mathsf{h}}_{\Cal{O}} \Cal{O} \circ^{\mathsf{h}}_{\Cal{O}} J \to J \circ^{\mathsf{h}}_{\Cal{O}} J \circ^{\mathsf{h}}_{\Cal{O}} J \simeq (J \circ^{\mathsf{h}}_{\Cal{O}} J) \circ_J (J \circ^{\mathsf{h}}_{\Cal{O}} J) .\]
    
    We would like to say that the $\Alg{O}$ Snaith splitting allows one to immediately recognize $\partial_* \Id_{\Alg{O}}$ as the cobar construction on $B(\Cal{O})$, however the splittings provided seem to be too weak to justify this claim (a similar problem is enocuntered in Arone-Kankaanrinta \cite{AK} for the classic Snaith splitting). As such, one benefit of our work is that we do not require any more rigid cooperad structure on $B(\Cal{O})$ to produce our model for $\partial_* \Id_{\Alg{O}}$. 
    
    Also of note is that the $\Alg{O}$ Snaith splitting may be interpreted to say that any $Y\in \mathsf{Alg}_J$ (resp. $\widetilde{Y}\in \mathsf{Mod}_{\Cal{O}[1]}$) is naturally a divided power coalgebra over $B(\Cal{O})$ (resp. $\widetilde{B}(\Cal{O})$), at least in the homotopy category, and that the functor $X\mapsto \TQ(X)$ underlies the left-adjoint to the conjectured Quillen equivalence (i.e., Koszul duality equivalence) between nilpotent $\Cal{O}$-algebras and nilpotent divided power $B(\Cal{O})$-coalgebras from Francis-Gaitsgory \cite{FG} (which has since been partially resolved by Ching-Harper \cite{Ching-Harper-Koszul}). 

\subsection{Interaction of the stabilization resolution with Taylor towers} \label{sec:fibrant-model}
We now provide the explicit model we employ for $\partial_* \Id_{\Alg{O}}$. Our argument is essentially to show that one can ``move the $\partial_*$ inside the $\holim$'' on the right hand side of (\ref{TQ-res-1}) by higher stabilization and then use the $\Alg{O}$ Snaith splitting to recognize the resulting diagram. Let us write $\Id$ for $\Id_{\Alg{O}}$.

\begin{proposition} \label{eq:P_n-equiv}
Let $k\geq n\geq 1$, then $P_n \Id \xrightarrow{\sim} \holim_{\bdel^{\leq k-1}} P_n ((UQ)^{\bullet+1})$.
\end{proposition}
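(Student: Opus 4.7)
The plan is to apply $P_n$ to the higher stabilization comparison map provided by Proposition \ref{higher-stabilization} and exploit the formal interaction between $P_n$ and homotopy limits. The cosimplicial coaugmentation $\Id \to (UQ)^{\bullet+1}$ induces, after applying $P_n$ levelwise and passing to $\holim$, a natural transformation $\Phi \colon P_n \Id \to \holim_{\bdel^{\leq k-1}} P_n((UQ)^{\bullet+1})$; the codomain is $n$-excisive as a homotopy limit of $n$-excisive functors, so it receives a canonical comparison from $P_n \Id$ via the universal property of the $n$-excisive approximation.

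Next, I would argue that $P_n$ commutes with the (finite) homotopy limit over $\bdel^{\leq k-1}$. Goodwillie's $n$-excisive approximation is built as a filtered homotopy colimit of iterated homotopy limits over cubical cross-effect diagrams, and filtered homotopy colimits commute with finite homotopy limits in suitable model categories. This yields a natural equivalence $P_n(\holim_{\bdel^{\leq k-1}} C) \simeq \holim_{\bdel^{\leq k-1}} P_n C$, under which $\Phi$ is identified with $P_n$ applied to the natural map $\Psi_X \colon X \to \holim_{\bdel^{\leq k-1}} C(X)$ from higher stabilization.

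By Proposition \ref{higher-stabilization}, the map $\Psi_X$ is $(k_0+1)(k+1)$-connected whenever $X$ is $k_0$-connected, and since $k \geq n$ this is at least $(k_0+1)(n+1)$-connected. I would then invoke the standard functor calculus fact, adapted to $\Alg{O}$ as in Pereira \cite{Per-thesis}, that a natural transformation whose $k_0$-connected values have connectivity at least $(k_0+1)(n+1)$ induces an equivalence on $P_n$. This reflects the fact that the $n$-th cross-effect, and hence $P_n$ itself, is determined by infinitesimal behavior detectable by such connectivity bounds. Applied to $\Psi$, this yields that $\Phi$ is an equivalence, as claimed.

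The main obstacle is verifying the precise hypotheses of the two formal inputs used above: first, the commutation of $P_n$ with $\holim_{\bdel^{\leq k-1}}$, which requires suitable properness or fibrancy to guarantee the filtered hocolim / finite holim interchange in $\Alg{O}$; and second, the exact connectivity-to-$P_n$-equivalence implication in the operadic algebra setting. Both follow from the foundations of functor calculus for $\Cal{O}$-algebras, but the bookkeeping must be done carefully, ensuring that the relevant cosimplicial objects may be arranged to be levelwise fibrant. The hypothesis $k \geq n$ enters precisely so that higher stabilization delivers the connectivity required to traverse the $n$-th Taylor polynomial.
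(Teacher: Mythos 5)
Your proposal is correct and follows essentially the same route as the paper: both arguments combine the higher stabilization estimates of Proposition \ref{higher-stabilization} (to show the coaugmentation agrees to order $n$ on $0$-connected objects, hence becomes an equivalence after applying $P_n$, via Goodwillie's order-$n$ agreement criterion) with the fact that $P_n$ commutes with very finite homotopy limits such as $\holim_{\bdel^{\leq k-1}}$. The connectivity-to-$P_n$-equivalence implication you invoke is precisely the $O_n$ condition of \cite[1.2, 1.6]{Goo-Calc3} used in the paper.
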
 

\begin{proof} 
The estimates from Proposition \ref{higher-stabilization} suffice to show that  the map \[c_k\colon \Id\to \holim_{\bdel^{\leq k-1}} C(-)\] agrees to order $n$ on the subcategory of $0$-connected objects (see \cite[1.2]{Goo-Calc3}) in which case $P_n (c_k)$ is an equivalence via \cite[1.6]{Goo-Calc3}. Further, \[P_n (\holim_{\bdel^{\leq k-1}} C(-)) \simeq \holim_{\bdel^{\leq k-1}} P_n  ((UQ)^{\bullet+1})\] as $P_n(-)$ commutes with \textit{very finite}\footnote{
    Recall that a \textit{very finite} homotopy limit is one taken over a diagram whose nerve has only finitely many nondegenerate simplices, and that such homotopy limits will commute with filtered homotopy colimits. Homotopy limits over $n$-cubes and punctured $n$-cubes are very finite
} homotopy limits by construction (cf. Section \ref{delta-leq-k}).  
\end{proof} 

Since $D_n(-)$ and $\partial_n(-)$ are built from $P_n(-)$ by very finite homotopy limits, Proposition \ref{eq:P_n-equiv} extends to an equivalence on homogeneous layers and derivatives as well. Moreover, the restriction map \[\holim_{\bdel} P_n((UQ)^{\bullet+1})\to \holim_{\bdel^{\leq k-1}} P_n ((UQ)^{\bullet+1})\] 
is an equivalence for $k\geq n\geq1$ as the objects as a corollary to the higher stabilization estimates from Proposition \ref{higher-stabilization} (resp. for $D_n$ and $\partial_n$). 

Let $M$ be an $(\Cal{O}, \Cal{O})$-bimodule. For notational convenience, for $k\geq 1$, we set \begin{equation} \label{eq:Mcirck} 
    M^{(k)} = \underbrace{M\circ_{\Cal{O}} \cdots \circ_{\Cal{O}} M}_k.
    \end{equation} Note that $J^{(k)}$ is a cofibrant $(\Cal{O}, \Cal{O})$-bimodule with $(UQ)^{k+1}(X)= J^{(k+1)}\circ_{\Cal{O}} (X)$. By Proposition \ref{prop:tay-tow-FM}, there are then equivalences  \begin{align*}
    P_n \Id \xrightarrow{\sim} & \holim_{\bdel^{\leq k-1}} \left( \xymatrix{
    P_n (UQ) \ar@<-.5ex>[r] \ar@<.5ex>[r] &
    P_n ((UQ)^2) \ar@<-1ex>[r] \ar[r] \ar@<1ex>[r] & 
    P_n ((UQ)^3) \cdots
    }\right)\\
      \simeq & \holim_{\bdel^{\leq k-1}} \left(\xymatrix{ 
    \tau_n J^{(1)}\circ_{\Cal{O}}(-) \ar@<.5 ex>[r] \ar@<-.5ex>[r] &
    \tau_n J^{(2)} \circ_{\Cal{O}}(-) \ar@<1 ex>[r] \ar[r] \ar@<-1 ex>[r] &
    \tau_n J^{(3)} \circ_{\Cal{O}}(-) \cdots }\right) 
\end{align*}
and \begin{align*}
    D_n \Id \xrightarrow{\sim} & \holim_{\bdel^{\leq k-1}} \left( \xymatrix{
    D_n (UQ) \ar@<-.5ex>[r] \ar@<.5ex>[r] &
    D_n ((UQ)^2)  \ar@<-1ex>[r] \ar[r] \ar@<1ex>[r] & 
    D_n ((UQ)^3) \cdots
    }\right) \\
     \simeq & \holim_{\bdel^{\leq k-1}} \left(\xymatrix{ 
    i_n J^{(1)}\circ_{\Cal{O}}(-) \ar@<.5 ex>[r] \ar@<-.5ex>[r] &
    i_n J^{(2)} \circ_{\Cal{O}}(-) \ar@<1 ex>[r] \ar[r] \ar@<-1 ex>[r] &
    i_n J^{(3)} \circ_{\Cal{O}}(-) \cdots }\right)
\end{align*}
whenever $k\geq n\geq 1$. 

Note there is an equivalence of restricted diagrams \[(\tau_n J^{(\bullet+1)}\circ_{\Cal{O}}(-) )|_{\bdel^{\leq k-1}} \simeq P_n((UQ)^{\bullet+1})|_{\bdel^{\leq k-1}}\] (resp. $(i_n J^{(\bullet+1)}\circ_{\Cal{O}}(-))|_{\bdel^{\leq k-1}} \simeq D_n((UQ)^{\bullet+1})|_{\bdel^{\leq k-1}}$) by first replacing the coface $k$-cube associated to \[\Id \to (UQ)^{\bullet+1}\] by the $k$-cube $\Cal{Z}_k$ (see (\ref{eq:Z_k}) below) and then applying $\tau_n$ (resp. $i_n$) objectwise. \begin{equation} \label{eq:Z_k} \{\Cal{P}(k) \ni T \mapsto \Cal{Z}_k(T) = (Z_1 \circ_{\Cal{O}} \cdots \circ_{\Cal{O}} Z_k) \circ_{\Cal{O}} (-) \}  \text{ such that $Z_i=\begin{cases} J & i\in T \\ \Cal{O} & i \notin T \end{cases}$} \end{equation} 

 We then use the corresponding models for $\widetilde{\mathbb{D}_n}$ from Proposition \ref{prop:tay-tow-FM} and compute the $n$-th derivatives via cross effects to obtain equivalences \begin{align} \label{eq:model-for-nth-deriv}
    \widetilde{\partial_n} \Id \xrightarrow{\sim} & 
    \holim_{\bdel^{\leq k-1}} \left( \xymatrix{
    \widetilde{\partial_n} (UQ) \ar@<-.5ex>[r] \ar@<.5ex>[r] &
    \widetilde{\partial_n} ((UQ)^2) \ar@<-1ex>[r] \ar[r] \ar@<1ex>[r] & 
    \widetilde{\partial_n} ((UQ)^3) \cdots
    }\right) 
    \\
    \simeq & \holim_{\bdel^{\leq k-1}} \left(\xymatrix{ 
     (\tau_1\Cal{O})^{(1)}[n] \ar@<.5 ex>[r] \ar@<-.5ex>[r] &
    (\tau_1\Cal{O})^{(2)}[n] \ar@<1 ex>[r] \ar[r] \ar@<-1 ex>[r] &
    (\tau_1\Cal{O})^{(3)}[n] \cdots }\right). \nonumber
\end{align}
for $k\geq n\geq 1$.

\begin{example} \label{ex:C(O)} We sketch this process for $k=n=2$. Note, there is an isomorphism of square diagrams of the form \[ 
    \xymatrix{ \Id \ar[r]^-{d^0} \ar[d]^-{d^0} & UQ \ar[d]^-{d^0} \\ UQ \ar[r]^-{d^1} & UQUQ } \;\; \raisebox{-4ex}{$\cong$} \;\; 
    \xymatrix{ (\Cal{O} \circ_{\Cal{O}} \Cal{O}) \circ_{\Cal{O}} (-) \ar[r]^-{d^0} \ar[d]^-{d^0} &
        (\Cal{O} \circ_{\Cal{O}} J) \circ_{\Cal{O}} (-) \ar[d]^-{d^0} \\
        (J \circ_{\Cal{O}} \Cal{O}) \circ_{\Cal{O}} (-) \ar[r]^-{d^1} &
        (J \circ_{\Cal{O}} J)\circ_{\Cal{O}} (-).}\]
Taking $2$-homogeneous layers, we obtain an equivalence of homotopy pullback squares
\[ 
    \xymatrix{ D_2\Id \ar[r]^-{d^0} \ar[d]^-{d^0} & D_2(UQ) \ar[d]^-{d^0} \\ D_2(UQ) \ar[r]^-{d^1} & D_2(UQUQ) } \;\; \raisebox{-4ex}{$\simeq$} \;\; 
    \xymatrix{ i_2(\Cal{O} \circ_{\Cal{O}} \Cal{O}) \circ_{\Cal{O}} (-) \ar[r]^-{d^0} \ar[d]^-{d^0} &
        i_2(\Cal{O} \circ_{\Cal{O}} J) \circ_{\Cal{O}} (-) \ar[d]^-{d^0} \\
        i_2(J \circ_{\Cal{O}} \Cal{O}) \circ_{\Cal{O}} (-) \ar[r]^-{d^1} &
        i_2(J \circ_{\Cal{O}} J)\circ_{\Cal{O}} (-).}\]

The associated lifts $\widetilde{\mathbb{D}_2}(-)$ to functors on $\mathsf{Mod}_{\Cal{O}[1]}$ from Proposition \ref{prop:tay-tow-FM} then fit into a homotopy pullback square
\[ 
    \xymatrix{ 
        \widetilde{\mathbb{D}_2}\Id \ar[r]^-{d^0} \ar[d]^-{d^0} & 
        (\Cal{O}\circ_{\Cal{O}} \tau_1 \Cal{O})[2] \wedge_{\Cal{O}[1]^{\wr 2}} (-)^{\wedge 2} \ar[d]^-{d^0} \\
        (\tau_1 \Cal{O} \circ_{\Cal{O}} \Cal{O})[2] \wedge_{\Cal{O}[1]^{\wr 2}} (-)^{\wedge 2} \ar[r]^-{d^1} &
        (\tau_1 \Cal{O} \circ_{\Cal{O}} \tau_1 \Cal{O} )[2] \wedge_{\Cal{O}[1]^{\wr 2}} (-)^{\wedge 2}  }\]
which by taking cross effects $\cro_2$ then provides an equivalence of homotopy pullback squares 
\[ 
    \xymatrix{ \widetilde{\partial_2}\Id \ar[r]^-{d^0} \ar[d]^-{d^0} & \widetilde{\partial_2} (UQ) \ar[d]^-{d^0} \\ \widetilde{\partial_2}(UQ) \ar[r]^-{d^1} & \widetilde{\partial_2} (UQUQ) }  \raisebox{-4ex}{$\simeq$}  
    \xymatrix{ \widetilde{\partial_2} \Id \ar[d]^-{d^0} \ar[r]^-{d^0} &
        \tau_1 \Cal{O}[2] \ar[d]^-{d^0} \\
        \tau_1 \Cal{O} [2]\ar[r]^-{d^1} &
        (\tau_1\Cal{O} \circ_{\Cal{O}} \tau_1\Cal{O})[2]}
        \raisebox{-4ex}{$\simeq$} 
    \xymatrix{ \partial_2 \Id \ar[d]^-{d^0} \ar[r]^-{d^0} &
        J[2] \ar[d]^-{d^0} \\
        J[2]\ar[r]^-{d^1} &
        (J\circ_{\Cal{O}} J)[2].}\]
\end{example}

\begin{remark} \label{prop:der_n-model-equivs} It follows then that $\partial_* \Id$ is obtained as $\holim_{\bdel} C(\Cal{O})\simeq \Tot C(\Cal{O})$, where $C(\Cal{O})$ is the following cosimplicial diagram (showing only coface maps)\begin{align} \label{al:CO}
    C(\Cal{O}) &= \left( \xymatrix{
    J\circ_{\Cal{O}} \Cal{O} \ar@<-.5ex>[r] \ar@<.5ex>[r] &
    J\circ_{\Cal{O}} J\circ_{\Cal{O}}\Cal{O} \ar@<-1ex>[r] \ar[r] \ar@<1ex>[r] & 
    J\circ_{\Cal{O}} J\circ_{\Cal{O}} J\circ_{\Cal{O}}\Cal{O} \cdots
    }\right) 
    \\
    & \cong \left( \xymatrix{
    J \ar@<-.5ex>[r] \ar@<.5ex>[r] &
    J\circ_{\Cal{O}} J \ar@<-1ex>[r] \ar[r] \ar@<1ex>[r] & 
    J\circ_{\Cal{O}} J\circ_{\Cal{O}} J\cdots
    }\right), \nonumber
\end{align}
with coface maps as in (\ref{TQ-res-1}), i.e., $C(\Cal{O})= J^{(\bullet+1)}$. In other words $C(\Cal{O})$ provides a rigidification of the diagram $\partial_* (UQ)^{\bullet+1}$ whose terms are \textit{a priori} defined only up to homotopy.
\end{remark} 

\section{The box-product of cosimplicial objects}
The aim of this section is to introduce the box product $\square$ for cosimplicial objects in a monoidal category $(\mathsf{C}, \otimes, \mathbf{1})$ as first introduced by Batanin \cite{Bat-box}. For nice categories $\mathsf{C}$ (e.g.,  $\mathsf{C}$ closed, symmetric monoidal), the box product endows $\mathsf{C}^{\bdel}$ with a monoidal structure, and cosimplicial objects which admits a monoidal pairing with respect to $\square$ inherit an $A_{\infty}$-monoidal pairing on their totalizations (see, e.g.,  McClure-Smith \cite[3.1]{MS-box}).

Our use of the box product will be to produce a homotopy-coherent (i.e., $A_{\infty}$-) composition on the derivatives of the identity, modeled as $\Tot C(\Cal{O})$, by demonstrating a natural pairing $C(\Cal{O})\square C(\Cal{O})\to C(\Cal{O})$ (Example \ref{ex:C(O)-box}).

\begin{definition} \label{def:box-product} 
Let $(\mathsf{C}, \otimes, \mathbf{1})$ be a monoidal category and $X,Y\in \textsf{C}^{\bdel}$. Define their \textit{box product} $X\square Y\in \textsf{C}^{\bdel}$ at level $n$ by \[
(X\square Y)^n:=
\colim \left( \xymatrix{ 
    \displaystyle{\coprod_{p+q=n}} X^{p}\otimes Y^{q} & 
    \ar@<-1 ex>[l] \ar@<.5 ex>[l] 
    \displaystyle{\coprod_{r+s=n-1}} X^{r}\otimes Y^{s} 
}\right) \]
where the maps are induced by $\id\otimes d^0$ and $d^{r+1}\otimes \id$. The object $X\square Y$ inherits cosimplicial structure via coface maps $d^i\colon (X\square Y)^{n}\to (X\square Y)^{n+1}$ induced by
\[\begin{cases} 
X^p\otimes Y^q \xrightarrow{\;\; d^i\otimes \id\;\;} X^{p+1}\otimes Y^{q} & i \leq p
\\
X^p\otimes Y^q \xrightarrow{\;\; \id \otimes d^{i-p} \;\;} X^{p}\otimes Y^{q+1} & i > p
\end{cases}
\]
and codegeneracy maps $s^j\colon (X\square Y)^n \to (X\square Y)^{n-1}$ induced by
\[\begin{cases} 
X^p\otimes Y^q \xrightarrow{\;\; s^j\otimes \id\;\;} X^{p-1}\otimes Y^{q} & j < p
\\
X^p\otimes Y^q \xrightarrow{\;\; \id \otimes s^{j-p} \;\;} X^{p}\otimes Y^{q-1} & j \geq p
\end{cases}
\]
see also Ching-Harper \cite[\textsection 4]{Ching-Harper-Koszul}.
\end{definition}

\begin{remark}Note, $(X\square Y)^0\cong X^0\otimes Y^0$, $(X\square Y)^{1}$ and $(X\square Y)^{2}$ may be computed as the colimits of \[ \xymatrix{ X^0 \otimes Y^1 & \\ X^0 \otimes Y^0 \ar[r]_-{d^1\otimes \id} \ar[u]^-{\id \otimes d^0} & X^1 \otimes Y^0} \; \raisebox{-8ex}{\text{ and }}  \; \xymatrix{ 
    X^0 \otimes Y^2 & &
    \\ 
    X^0 \otimes Y^1 \ar[r]_-{d^1\otimes \id} \ar[u]^-{\id \otimes d^0} 
    & 
    X^1 \otimes Y^1 & \\
    & X^1 \otimes Y^0 \ar[u]^-{\id \otimes d^0} \ar[r]_-{d^2\otimes \id} 
    &
    X^2 \otimes Y^0}
    \] respectively, and in general $(X\square Y)^n$ may be computed as the colimit of the staircase diagram \begin{equation}\xymatrix{
    X^0 \otimes Y^n & &\\
    X^0 \otimes Y^{n-1} \ar[r]_{d^1\otimes \id} \ar[u]^{\id\otimes d^0} & X^1 \otimes Y^{n-1} &\\
    & \ddots \ar[u] \ar[r]& X^{n-1}\otimes Y^1 &\\
    && X^{n-1}\otimes Y^0 \ar[u]^{\id\otimes d^0} \ar[r]_{d^n\otimes \id} &X^n\otimes Y^0 
}\end{equation}
\end{remark}

In particular, if $(\mathsf{C},\otimes, \mathbf{1})$ is closed, symmetric monoidal then $\square$ defines a monoidal category $(\textsf{C}^{\bdel}, \square, \underline{\mathbf{1}})$, here $\underline{\mathbf{1}}$ is the constant cosimplicial object on the unit $\mathbf{1}\in\mathsf{C}$ (see, e.g.,  Batanin \cite{Bat-box}). 

\begin{example} \label{ex:C(O)-box}
Recall the cosimplicial symmetric sequence $C(\Cal{O})=J^{(\bullet+1)}$ from \eqref{al:CO}. We observe that $C(\Cal{O})$ admits a pairing $C(\Cal{O}) \boxcirc C(\Cal{O}) \xrightarrow{m} C(\Cal{O})$, where $\boxcirc$ denotes the box product in $\SymSeq^{\bdel}_{\Spt}$, induced as follows. Let $c$ denote the operad composition map $c\colon J\circ J\to J$. Then, \begin{align*} 
    &(C(\Cal{O}) \boxcirc C(\Cal{O}))^0\cong J \circ J \xrightarrow{c} J=C(\Cal{O})^0 \end{align*}
    
For level $1$ we observe that there are maps \[ m_{0,1} \colon J\circ J\circ_{\Cal{O}} J \to J\circ_{\Cal{O}} J \; \text{ and } \; m_{1,0} \colon J \circ_{\Cal{O}} J \circ J \to J\circ_{\Cal{O}} J \]
induced by $J\circ J\to J$ which induces $m$ via the following commuting square
\begin{equation}\xymatrix{
    J \circ J\circ_{\Cal{O}} J \ar[r]^-{m_{0,1}}
    & 
    J \circ_{\Cal{O}} J 
    \\
    J \circ J \ar[u]^-{\id\circ  d^0} \ar[r]^-{d^1\circ \id}
    &
    J\circ_{\Cal{O}} J\circ J \ar[u]^-{m_{1,0}}
} 
\end{equation}

More generally, there are maps of the form \[m_{p,q}\colon J^{(p)} \circ J^{(q)} \to J^{(p+q)} \;\; \text{for $p+q=n$, $p,q\geq 0$}\] induced by $c$, which induces the pairing $m$ at level $n$. 
\end{example}

\begin{remark}
    The above construction is entirely analogous to the following example found in McClure-Smith \cite{MS-box} that the based loop space $\Omega X$ of $X\in \Top$ admits an $A_{\infty}$ composition induced by an underlying $\square$-pairing. In this case, $\Omega X$ is modeled as the totalization of the cobar complex $c(X)$ built with respect to the natural comultiplication (with coaugmentation) given by the diagonal $\delta\colon X\to X\times X$. 
    
    It follows that $c(X)^{p}\cong X^{\times p}$ and the pairing $c(X)\square c(X)\to c(X)$ is induced by the natural isomorphisms $X^{\times p}\times X^{\times q}\cong X^{\times p+q}$. Further, McClure-Smith  show that $\Tot c(X)$ is an algebra over the (nonsymmetric) coendmorphism operad on $\Delta^{\bullet}$, i.e., \[ \mathbb{A}[n]=\Map_{\Dres}^{\mathsf{Top}}\left(\Delta^{\bullet}, (\Delta^{\bullet})^{\square n} \right)\] which satisfies $\mathbb{A}[0]=\pt$ and $\mathbb{A}[n]\xrightarrow{\sim} \pt$ for $n\geq 1$ (in fact $\Delta^n$ and $(\Delta^{\bullet} \square \Delta^{\bullet})^n$ are \textit{homeomorphic}), and that with respect to this structure $\Tot c(X)\simeq \Omega X$ as $A_{\infty}$-monoids.  \label{ex:loop-space-A-infty-monoid}
\end{remark}

\subsection{The box product in $\SymSeq^{\bdel}$}
Our aim now is to build a framework in which we can work with the structure captured by Example \ref{ex:C(O)-box}, e.g.,  by considering the box-product in the category of cosimplicial objects in $(\SymSeq_{\mathsf{C}}, \circ, I)$ of symmetric sequences for $(\mathsf{C},\otimes, \mathbf{1})$ some closed symmetric monoidal category. 

The main difficulty is that the composition product of symmetric sequences does not always commute with colimits taken in the right hand entry. That is, for $B\colon \mathcal{I}\to \SymSeq_{\mathsf{C}}$ a small diagram and $A\in \SymSeq_{\mathsf{C}}$, the universal map \begin{equation} \label{eq:colim-circ}\colim_{i\in \mathcal{I}} (A\circ B_i) \to A \circ (\colim_{i\in \mathcal{I}} B_i) \end{equation} is not an isomorphism in general. Thus the box-product fails to be strictly monoidal in this setting. 

Let us write $\SymSeq=\SymSeq_{\mathsf{C}}$ and $\boxcirc$ for the box-product in $\SymSeq^{\bdel}$ (in words we refer to $\boxcirc$ as the \textit{box-circle product}). Let $\Cal{X},\Cal{Y},\Cal{Z}, \dots$ be cosimplicial symmetric sequences. We will systematically interpret expressions of the form $\Cal{X} \boxcirc \Cal{Y} \boxcirc \Cal{Z}$ to be expanded \textit{from the left}, i.e., 
    \[\Cal{X} \boxcirc \Cal{Y} \boxcirc \Cal{Z} := (\Cal{X} \boxcirc \Cal{Y}) \boxcirc \Cal{Z}, \; \Cal{X} \boxcirc \Cal{Y} \boxcirc \Cal{Z} \boxcirc \Cal{W} := ((\Cal{X} \boxcirc \Cal{Y}) \boxcirc \Cal{Z} ) \boxcirc \Cal{W}, \;\dots \]
and note that via the universal map in (\ref{eq:colim-circ}) there is always a canonical comparison map $\theta$ of the form    
    \begin{equation} \label{eq:theta} \theta\colon  \Cal{X}\boxcirc \Cal{Y}\boxcirc \Cal{Z}=(\Cal{X} \boxcirc \Cal{Y}) \boxcirc \Cal{Z}  \to \Cal{X} \boxcirc (\Cal{Y} \boxcirc \Cal{Z})\end{equation}
which likely fails to be invertible. However, $\theta$ is sufficient to provide a suitable description of monoids with respect to $\boxcirc$, i.e., Definition \ref{boxcirc-monoid}, below. First, we note that the unit $I\in \SymSeq_{\mathsf{C}}$ induces a unit $\underline{I} \in \SymSeq_{\mathsf{C}}$ as the constant cosimplicial object on $I$ in that there are isomorphisms \[ \Cal{X} \boxcirc \underline{I} \cong \Cal{X} \cong \underline{I} \boxcirc \Cal{X}.\] 

For instance, the right isomorphism is obtained by noting that for any $p,q$ the map $d^{p+1}\circ\id$ in the following \[ \xymatrix{
        \underline{I}^p \circ \Cal{X}^{q+1} 
        & \\
        \underline{I}^p \circ \Cal{X}^q \ar[u]^-{\id\circ d^0} \ar[r]^-{d^{p+1}\circ \id}
        &
        \underline{I}^{p+1} \circ \Cal{X}^{q} 
        } \]
    is just the identity (and hence has an inverse). Therefore, the inclusion of the vertex $\underline{I}^0\circ \Cal{X}^n$ into the diagram defining $(\Cal{X}\boxcirc \Cal{X})^n$ is right cofinal (i.e., induces an isomorphism on colimits).

\begin{definition} \label{boxcirc-monoid} By $\boxcirc$-monoid in $\SymSeq^{\bdel}$, we mean a cosimplicial symmetric sequence $\Cal{X}$ together with maps $m\colon \Cal{X} \boxcirc \Cal{X} \to \Cal{X}$ and $u\colon \underline{I} \to \Cal{X}$ so that the following associativity (\ref{eq:assoc-boxcirc}) and unitality (\ref{eq:unit-boxcirc}) diagrams commute
    \begin{equation} \label{eq:assoc-boxcirc} \xymatrix{  
        \Cal{X} \boxcirc \Cal{X} \boxcirc \Cal{X} \ar[r]^{\theta} \ar[d]^-{=}&
        \Cal{X} \boxcirc (\Cal{X} \boxcirc \Cal{X}) \ar[r]^-{\id \boxcirc m} &
        \Cal{X} \boxcirc \Cal{X} \ar[d]^-{m} \\
        (\Cal{X} \boxcirc \Cal{X}) \boxcirc \Cal{X} \ar[r]^-{m \boxcirc \id} &
        \Cal{X} \boxcirc \Cal{X} \ar[r]^-{m} &
        \Cal{X}
        }\end{equation}
    and
   \begin{equation} \label{eq:unit-boxcirc} \xymatrix{ 
        \Cal{X} \boxcirc \underline{I} \ar[r]^-{\id\boxcirc u} \ar[dr]_-{\cong}
        &
        \Cal{X} \boxcirc \Cal{X} \ar[d]^-{m}
        & 
        \underline{I}\boxcirc \Cal{X} \ar[l]_-{u\boxcirc \id} \ar[dl]^-{\cong}\\
        & \Cal{X}  &
        } 
    \end{equation}
\end{definition} 

\begin{remark}
We remark that in the language of Ching \cite{Ching-oplax} (see also Day-Street \cite{Day-St}), $\boxcirc$ admits a \textit{normal oplax monoidal structure} by defining \[ \Cal{X}_1\boxcirc \cdots \boxcirc \Cal{X}_k:= (\cdots (( \Cal{X}_1 \boxcirc \Cal{X}_2) \boxcirc \Cal{X}_3) \cdots)  \boxcirc \Cal{X}_k \] and obtaining grouping maps from the universal map in (\ref{eq:colim-circ}). Our notion of $\boxcirc$-monoids are \textit{normal oplax monoids} with respect to such structure by appealing to Ching \cite[3.4]{Ching-oplax}, noting in particular that four-fold and higher associativity diagrams are known to commute given the commutativity of (\ref{eq:assoc-boxcirc}).
\end{remark}

\begin{proposition}
    \label{def:box-circ-prod-map}
    The cosimplicial symmetric sequence $C(\Cal{O})$ (see \eqref{al:CO}) admits a natural $\boxcirc$-monoid structure, i.e., there are maps $m\colon C(\Cal{O})\boxcirc C(\Cal{O})\to C(\Cal{O})$ and $u \colon \underline{I}\to C(\Cal{O})$ which satisfy associativity and unitality. 
\end{proposition}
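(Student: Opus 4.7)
The plan is to construct the pairing $m$ from level-wise maps $m_{p,q}\colon J^{(p+1)}\circ J^{(q+1)}\to J^{(p+q+1)}$ for $p+q=n$ as outlined in Example \ref{ex:C(O)-box}, and to take the unit $u\colon \underline{I}\to C(\Cal{O})$ to be induced by the operad unit $\eta\colon I\to J$. To pin down $m_{p,q}$ precisely, use that $-\circ X$ preserves colimits in its first argument to write
\[J^{(p+1)}\circ J^{(q+1)} = (J^{(p)}\circ_{\Cal{O}} J)\circ (J\circ_{\Cal{O}} J^{(q)}) \cong J^{(p)}\circ_{\Cal{O}}\bigl(J\circ (J\circ_{\Cal{O}} J^{(q)})\bigr);\]
under the standard assumption that $\circ$ preserves reflexive coequalizers, the inner factor further identifies with $(J\circ J)\circ_{\Cal{O}} J^{(q)}$, and applying the operad multiplication $c\colon J\circ J\to J$ produces $J^{(p+q+1)}$. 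Well-definedness rests on $c$ being $(\Cal{O},\Cal{O})$-bilinear, which holds because $\Cal{O}\to J$ is a map of operads.

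I would then verify that $\{m_{p,q}\}_{p+q=n}$ assembles into a map $m^n\colon (C(\Cal{O})\boxcirc C(\Cal{O}))^n\to C(\Cal{O})^n$ out of the staircase colimit, and that $m^\bullet$ is itself a map of cosimplicial symmetric sequences. The first reduces to the identity
\[m_{r,s+1}\circ(\id\circ d^0) = m_{r+1,s}\circ(d^{r+1}\circ \id)\]
for each $r+s=n-1$; both sides describe the ``insert $\Cal{O}\to J$ into the central $\circ$ and then apply $c$'' operation, and agree by the unit axiom for $\eta$ (precomposing $c$ with either $\eta\circ \id$ or $\id\circ \eta$ is the identity). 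Cosimplicial compatibility of $m^\bullet$ follows because non-central coface insertions commute with $c$ and codegeneracies are compatible with $c$ by associativity of the operad $J$.

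For the unit I would set $u^0=\eta$ and take $u^n$ to be any iterated coface applied to $u^0$; the element $u^n(1)=1_J\otimes_{\Cal{O}}\cdots\otimes_{\Cal{O}} 1_J\in J^{(n+1)}[1]$ is independent of the choice of cofaces, so $u$ is a well-defined map of cosimplicial objects, and unitality of $(m,u)$ (diagram \eqref{eq:unit-boxcirc}) follows from the unit axiom of $J$. Associativity (diagram \eqref{eq:assoc-boxcirc}) is verified on triple products $J^{(a+1)}\circ J^{(b+1)}\circ J^{(c+1)}$: both composites, after navigating $\theta$, realize the iterated application of $c$ at the two central $\circ$'s, and agree by associativity of $c$.

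The main obstacle is tracking the comparison map $\theta$ correctly in the associativity step, since $-\circ X$ fails to preserve colimits in its second slot and the natural maps relating expressions like $(J\circ J)\circ_{\Cal{O}} J^{(q)}$ and $J\circ(J\circ_{\Cal{O}} J^{(q)})$ are not generally isomorphisms. The two features that make the construction go through are that $-\circ X$ does preserve colimits in its first slot and that the operad multiplication $c$ is $(\Cal{O},\Cal{O})$-bilinear and associative; the bookkeeping apparatus designed for exactly these manipulations is what motivates the $\mathbf{N}$-colored operads with levels developed in Sections 6--7.
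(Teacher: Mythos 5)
Your proposal is correct and follows essentially the same route as the paper's proof: $m$ is assembled from the levelwise maps $m_{p,q}$ induced by the operad multiplication $c\colon J\circ J\to J$ (as in Example \ref{ex:C(O)-box}), $u$ comes from the unit $I\to J$, compatibility on the staircase colimit is checked via the relation between $\id\circ d^{0}$ and $d^{r+1}\circ\id$, and associativity/unitality reduce to the operad axioms for $J$; your version simply spells out details the paper dismisses as a ``routine calculation.'' One small imprecision: the coherence $m_{r,s+1}(\id\circ d^{0})=m_{r+1,s}(d^{r+1}\circ\id)$ is not literally the unit axiom for $\eta\colon I\to J$ but rather the fact that $J\circ\Cal{O}\to J\circ J\xrightarrow{c}J$ and $\Cal{O}\circ J\to J\circ J\xrightarrow{c}J$ are the two $\Cal{O}$-actions coequalized in $\circ_{\Cal{O}}$ (i.e., the $(\Cal{O},\Cal{O})$-bilinearity of $c$ that you correctly invoke in your first paragraph).
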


\begin{proof}
    The map $m$ is that constructed in Example \ref{ex:C(O)-box}. The unit $I\to J$ provides a coaugmentation $I\to C(\Cal{O})$ which in turn induces a map $u\colon \underline{I}\to C(\Cal{O})$. 
    
    Associativity (\ref{eq:assoc-boxcirc}) follows from a routine calculation, observing that \[d^0 \colon (\fco \boxcirc \fco)^{q} \to (\fco \boxcirc \fco)^{q+1}\] is induced by $d^0 \circ \id\colon \fco^r\circ \fco^s\to \fco^{r+1} \circ \fco^s$ for $r+s=q$. Similarly, the right-hand triangle from the unitality diagram (\ref{eq:unit-boxcirc}) is granted by the following commuting diagrams\[ \xymatrix{
        \underline{I}^p \circ C(\Cal{O})^q \ar[r]^-{u^p\circ \id} \ar[d]^-{\cong}
        & 
        C(\Cal{O})^p \circ \fco^q \ar[d]^-{m_{p,q}}
        \\ 
        \fco^q \ar[r]^-{d^0\cdots d^0} 
        & 
        \fco^{p+q}
        }\]
    for all $p,q$. A similar argument provides the commutativity of the other side of the unitality diagram. 
\end{proof}

Theorem \ref{main}(a) is then obtained as a corollary to the following proposition, the proof of which is deferred to Section \ref{sec:1a}. As such, the aim of the following sections is to set up a precise framework to describe what is meant by $A_{\infty}$-operad.

\begin{proposition}  \label{main-1}
    If $\Cal{X}$ is a $\boxcirc$-monoid in $\SymSeq_{\Spt}^{\bdel}$, then $\Tot \Cal{X}$ is an $A_{\infty}$-monoid with respect to the composition product (i.e., $A_{\infty}$-operad). 
\end{proposition}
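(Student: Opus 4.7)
The plan is to emulate the argument of McClure-Smith \cite{MS-box}, which shows that a $\square$-monoid in cosimplicial spaces or spectra totalizes to an $A_\infty$-monoid, while systematically accommodating the obstruction that the composition product $\circ$ on $\SymSeq_{\Spt}$ fails to commute with colimits in the right variable. This is precisely the pathology the $\Nl$-operad framework of Sections~6--7 is designed to absorb: instead of trying to strictify $\boxcirc$, one organises the natural comparison maps $\theta$ of \eqref{eq:theta} and the iterated $\boxcirc$-multiplications of $\Cal{X}$ into an action of a coendomorphism-type $\Nl$-operad on $\Tot \Cal{X}$ directly.

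The first step is to construct, for each $k \geq 1$ and cosimplicial symmetric sequences $\Cal{X}_1,\ldots,\Cal{X}_k$, a natural oplax pairing
$$\mu^{(k)}\colon \Tot \Cal{X}_1 \circ \cdots \circ \Tot \Cal{X}_k \longrightarrow \Tot \bigl(\Cal{X}_1 \boxcirc \cdots \boxcirc \Cal{X}_k\bigr),$$
obtained from the fact that $\Delta^{\bullet}$ is itself a $\square$-monoid in cosimplicial spaces (the diagonals $\Delta^{\bullet}\to (\Delta^{\bullet})^{\square k}$ providing the comultiplication) together with the adjunction defining $\Tot = \Map^{\Spt}_{\Dres}(\Delta^{\bullet}, -)$. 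Composing $\mu^{(k)}$ with $\Tot$ of the iterated $\boxcirc$-multiplication $\Cal{X}^{\boxcirc k} \to \Cal{X}$, whose existence and coherence up to the grouping maps $\theta$ follows from Definition \ref{boxcirc-monoid} together with Ching \cite{Ching-oplax} for four-fold and higher associativities, yields composition maps $(\Tot \Cal{X})^{\circ k}\to \Tot \Cal{X}$ for every $k$, together with a unit induced by $u$.

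The second step is to package these maps as an action of a suitable $\Nl$-operad $\mathbb{A}_{\circ}$ on $\Tot \Cal{X}$. This $\mathbb{A}_{\circ}$ should be defined as a coendomorphism-type $\Nl$-operad on $\Delta^{\bullet}$ with respect to $\boxcirc$, whose entries at each $\mathbf{N}$-level are built from restricted mapping spectra of the form $\Map^{\Spt}_{\Dres}(\Delta^{\bullet}, (\Delta^{\bullet})^{\square k})$. The stratification by levels is dictated precisely by how the $\theta$-groupings intervene, and the coherence imposed by the $\boxcirc$-monoid axioms on $\Cal{X}$ translates exactly into the axioms for an $\mathbb{A}_{\circ}$-algebra, so that $\Tot \Cal{X}$ naturally becomes such an algebra.

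The heart of the matter is then to identify $\mathbb{A}_{\circ}$ as a fattened-up replacement of the $\Nl$-operad $\Oper$ whose strict algebras are operads in $\Spt$ (Definition \ref{def-of-Oper} and Propositions \ref{prop:Oper-is-Nl}, \ref{oper-oper}). The key input is the McClure-Smith contractibility observation: $(\Delta^{\bullet})^{\square k}$ is homeomorphic to a cosimplicial simplex in a $\Dres$-compatible manner, so the relevant mapping objects are weakly contractible, yielding a levelwise equivalence $\mathbb{A}_{\circ} \xrightarrow{\sim} \Oper$. By the definitions of Section~6, an algebra over such a fattened-up $\Nl$-operad is by construction an $A_{\infty}$-operad, proving the claim. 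The principal obstacle I anticipate is diagrammatic bookkeeping: verifying that the oplax behaviour of $\boxcirc$ slots correctly into the $\mathbf{N}$-level grading, so that the equivalence $\mathbb{A}_{\circ}\simeq \Oper$ is genuinely $\Nl$-operadic rather than merely an equivalence of underlying symmetric sequences.
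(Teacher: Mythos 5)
Your proposal is correct and follows essentially the same route as the paper: the paper defines the action maps $\lambda_{\ell}$ exhibiting $\Tot\Cal{X}$ as an algebra over the coendomorphism $\Nl$-operad $\Cal{A}=\mathsf{coEnd}(\SD_+)$ (built exactly from the restricted mapping objects $\Map_{\Dres}(\SD_+,(\SD_+)^{\boxcirc\ell})^{\Sigma}$, with the $\theta$-groupings inverted via the maps $\mu_{\ell_1,\dots,\ell_k}$ available because $\SD_+$ is $\Sigma$-free), and then invokes the levelwise contractibility equivalence $\Cal{A}\xrightarrow{\sim}\Oper^{\Top}$ of Proposition \ref{A-equiv-to-E}. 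Your anticipated ``principal obstacle'' is precisely what Propositions \ref{prop:coend-X} and \ref{A-equiv-to-E} resolve, the $\Sigma$-freeness of $\SD_+$ being the ingredient that makes the equivalence genuinely $\Nl$-operadic.
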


\section{$\mathbf{N}$-colored operads with levels} \label{sec:N-lev}
In this section we develop our theory of $\mathbf{N}=\{0,1,2,\dots\}$-colored operads with levels, which we refer to as $\Nl$-operads. The motivating principle behind our constructions is to provide a framework to fatten-up the usual notion of operads and their algebras. For this section $(\mathsf{C},\otimes, \mathbf{1})$ will denote a given cocomplete closed, symmetric monoidal category with initial object $\ptC$. We first recall the classical theory of colored operads. 

\subsection{Colored operads}

Colored operads (sometimes also referred to as \textit{multicategories}) offer a generalization of operads to encode more nuanced algebraic operations on their algebras. We give an overview of their pertinent details below and refer the reader to Leinster \cite{Lei} or Elmendorf-Mandell \cite{EM-colop} for more information. As before, we will only need to consider colored operads in the category of spectra. 

\begin{definition} Let $C$ be a nonempty set, i.e., a set of \textit{colors}. A \textit{$C$-colored operad} $\Cal{M}$ in $\mathsf{C}$ consists of \begin{itemize}
    \item Objects $\mathcal{M}(c_1,\dots,c_n;d)\in \mathsf{C}$ for all $(c_1,\dots,c_n;d)\in C^{\times n}\times C$ and $n\geq 0$
    \item A unit map $\mathbf{1} \to \Cal{M}(c;c)$ for all $c\in C$
    \item Composition maps of the form \begin{align*} \label{eq:colored-op-comp}
        \Cal{M}(c_1,\dots,c_n;d)\otimes \Cal{M}(p_{1,1}, \dots, p_{1,k_1};c_1)&\otimes \cdots \otimes \Cal{M}(p_{n,1},\dots,p_{n,k_n};c_n)\\
    &\to  \Cal{M}(p_{1,1},\dots, p_{n,k_n};d)
\end{align*}
\end{itemize} 
subject equivariance, associativity and unitality conditions (see, e.g.,  \cite[2.1]{EM-colop}).
\end{definition} 

An \textit{algebra} over $\mathcal{M}$ is a $C$-colored object, i.e., $X=\{X_c\}_{c\in C}$ such that $X_c\in \mathsf{C}$ for all $c\in C$, together with maps for each tuple $(c_1,\dots,c_n;d)\in C^{\times n}\times C$ of the form \[\Cal{M}(c_1,\dots,c_n;d)\otimes X_{c_1}\otimes \cdots \otimes X_{c_n} \to X_d\] the collection of which is required to satisfy equivariance, associativity and unitality conditions. 

Berger-Moerdijk provide a list of examples in \cite[\textsection 1.5]{BM-trees}; of note is that for $C=\{\pt\}$, a one-colored operad is just an operad in the classical sense. The following constructions are also motivated by White-Yau \cite{White-Yau} wherein a composition product for $C$-colored operads is provided.

\subsection{$\Nl$-objects}
The purpose of this section is to introduce the notion of a nonsymmetric, $\mathbf{N}$-colored sequence \textit{with levels} in $\mathsf{C}$. We will refer to these as \textit{$\Nl$-objects}. 
In our framework, $\Nl$-objects will play a role analogous to symmetric sequences for classical (one-color) operads, though we note that we do not yet impose any symmetric group actions on our $\Nl$-objects. Let $\mathbf{s}$ denote the set $\{1,\dots,s\}$ (note that $\mathbf{0}=\emptyset$). 

\begin{definition} 
\label{N-circhat-k}
For $k\geq 0$, let $\mathbf{N}^{\hat{\circ}k}$ denote the set of tuples of orbits
\[ 
\mathbf{N}^{\hat{\circ}k}:=\left\{ \left(n^1,(n^2_1,\cdots,n^2_{n^1})_{\Sigma_{n^1}} , \cdots, (n^{k}_1,\cdots, n^k_{n^{k-1}})_{\Sigma_{n^{k-1}}}\right): n^j_i \geq 0\; \forall i,j\right\}
\]
where $n^{j}$ is inductively defined as $\sum_{i=1}^{n^{j-1}} n^j_i$ and we set $n^0:=1$. We then treat $\Ncirc{k}$ as a category with only identity morphisms. 
\end{definition}

Note that the superscripts in Definition \ref{N-circhat-k} are used for indexing and are not powers, we will adhere to this convention throughout the document. 
Elements $\underline{p}\in \Ncirc{k}$ will be referred to as \textit{profiles}, we will often suppress the orbit subscript and write $(n_1,\dots,n_s)$ for the orbit $(n_1,\dots,n_s)_{\Sigma_s}$.

\begin{definition} Given $\underline{p}=(n^1, \dots, (n^{k}_i)_{i\in \mathbf{n^{k-1}}})\in \Ncirc{k}$, we define the \textit{weight} of $\underline{p}$ to be the integer
$n^k=\sum_{i\in \mathbf{n^{k-1}}} n^k_i$. For $t\in \mathbf{N}$, we write $\Ncirc{k}[t]$ for the set of profiles $\underline{p}\in \Ncirc{k}$ of weight $t$.
\end{definition} 

\begin{example}
\label{Nl-ex-1}Computing small examples we see \begin{align*}
\Ncirc{0} & = \{\emptyset\},
&
\Ncirc{2} & \cong \{ (n, (k_1,\dots,k_n)): n,k_i\geq 0\},
\\
\Ncirc{1} & \cong \mathbf{N},
&
\Ncirc{3} & \cong \{(n, (k_1,\dots,k_n), (t_1,\dots, t_k): k=k_1+\dots+k_n, n,k_i,t_j\geq 0\}. \nonumber
\end{align*}
\end{example}

\begin{remark}
Note that profiles in $\Ncirc{\ell}$ are in bijective correspondence to indexing factors of $\ell$-fold iterates of $\circhat$ from (\ref{comp-prod-nonsym}), therefore objects indexed on $\Ncirc{\ell}$ naturally arise when evaluating $\ell$-fold iterates of the composition product of symmetric sequences (Definition \ref{comp-prod-def-orb}) from the left. 
\end{remark}

Given $\underline{p}=(n^1, (n^2_i)_{i\in\mathbf{n^1}}\dots, (n^{\ell}_i)_{i\in\mathbf{n}^{\ell-1}})\in\Ncirc{\ell}[t]$, the term \[(X_1\circ \cdots \circ X_{\ell})[\underline{p}]\] is the collection of factors in $(X_1\circ\cdots \circ X_{\ell})[t]$ corresponding to the indexing tuples \[(n^j_1,\dots,n^j_{n^1})_{\Sigma_{n^{j-1}}}\in\mathsf{Sum}_{n^{j-1}}^{n^j},\] for $j=1,\dots, \ell$.

\begin{definition} 
Given profiles $\underline{p},\underline{q}\in\Ncirc{k}$ we define their \textit{amalgamation} $\underline{p}\amalg \underline{q}$ to be the orbit of the levelwise disjoint union of the two profiles. In other words, given \begin{align*} 
\underline{p} & =(n^1,(n^2_{i})_{i\in \mathbf{n^1}}, (n^3_{i})_{i\in \mathbf{n^2}},\dots, (n^{k}_{i})_{i\in \mathbf{n^{k-1}}}), \\
\underline{q} & =(m^1,(m^2_{j})_{j\in \mathbf{m^1}}, (m^3_{j})_{j\in \mathbf{m^2}},\dots, (m^{k}_{j})_{j\in \mathbf{m^{k-1}}}),
\end{align*}
then $\underline{p}\amalg\underline{q}$ is given by\begin{equation*}
\underline{p}\amalg \underline{q}:=\Big((n^1,m^1),\big( (n^2_{i})_{i\in \mathbf{n^1}} \amalg (m^2_{j})_{j\in \mathbf{m^1}} \big) , \dots, \big( (n^{k}_{i})_{i\in \mathbf{n^{k-1}}} \amalg (m^{k}_{j})_{j\in \mathbf{m^{k-1}}}\big) \Big).\end{equation*}
\end{definition}

\begin{remark} Note that $\underline{p}\amalg \underline{q}$ is \textit{not} an element of any $\Ncirc{k}$ as its first entry is not a singleton. However, if $\underline{p}_i\in \Ncirc{k}[t_i]$ for $i=1,\dots,n$ then \[\big( n, \underline{p_1}\amalg\cdots \amalg \underline{p_n} \big)\in \Ncirc{k+1}[t_1+\cdots+t_n].\]

For instance, if $\underline{p}=(2,(2,3))$ and $\underline{q}=(3,(2,3,4))$ then \[\big(2,\underline{p}\amalg\underline{q}\big)=\big(2,(2,3)_{\Sigma_2},(2,3,2,3,4)_{\Sigma_5}\big)\in \Ncirc{3}[14].\] 
\end{remark}

\begin{definition} 
An \textit{$\Nl$-object} $\mathcal{P}$ in a symmetric monoidal category $\mathsf{C}$ is a functor \[ \Cal{P}\colon \coprod_{\ell\geq 0} \Ncirc{\ell}\times \mathbf{N} \to \mathsf{C}.\] 

Equivalently, $\mathcal{P}=(\mathcal{P}_{k})_{k\geq 0}$ such that $\mathcal{P}_{k}$ is a functor $\Ncirc{k}\times \mathbf{N} \to \mathsf{C}$. We also refer to $\Nl$-objects as \textit{$\mathbf{N}$-colored objects with levels}. 
We further say an $\Nl$-object $\Cal{P}$ is \textit{reduced} if \begin{itemize}
    \item For $\ell\geq 1$, $\Cal{P}_{\ell}(\underline{p};t)= \ptC$ if $\underline{p}\notin \Ncirc{\ell}[t]$
    \item $\Cal{P}_0(\emptyset;1)=\mathbf{1}$ 
    \item $\Cal{P}_0(\emptyset; n)=\ptC$ for $n\neq 1$. 
\end{itemize} 
Recall that $\ptC$ denotes the initial object of $\mathsf{C}$.
\end{definition}
Note if $\Cal{P}$ is reduced then $\Cal{P}$ is determined by a functor $\coprod_{\ell\geq 0} \Ncirc{\ell} \to \mathsf{C}$. We will mostly be concerned with reduced $\Nl$-objects, but benefit from this more general definition when we discuss algebras in Section \ref{sec:algebras}.

\subsection{A composition product for $\Nl$-objects}
The aim of this section is develop a monoidal composition product for $\Nl$-objects so that we may encode $\Nl$-operads as monoids. 

\begin{definition}
\label{circ-dot-composite}
Let $\underline{p}=(n^1,(n^2_i)_{i\in\mathbf{n^1}},\dots,(n^k_i)_{i\in\mathbf{n^{k-1}}}) \in \Ncirc{k}$ and let $\ell_1,\dots,\ell_k\geq 0$ be given. Let $\underline{Q}$ denote a collection of unordered sequences of profiles $(\underline{q}_1^j,\cdots, \underline{q}^j_{n^{j-1}})$ for $j=1,\dots,k$ such that $\underline{q}^j_i\in\Ncirc{\ell_j}[n^j_i]$. 

We define the \textit{composite} of $\underline{p}$ and $\underline{Q}$ to be the profile $\underline{p}\circ \underline{Q}\in\Ncirc{(\ell_1+\dots+\ell_k)}$ given as follows \[\underline{p}\circ \underline{Q}:=\big(\underline{q^1}, \underline{q}^2_1\amalg \cdots \amalg \underline{q}^2_{n^1}, \cdots, \underline{q}_1^k\amalg \cdots \amalg \underline{q}_{n^{k-1}}^k\big).\]
\end{definition}

Let \[\Ncirc{k} \ltimes \left(\coprod_{\ell_1,\cdots,\ell_k\geq 0}\Ncirc{\ell_1} \times \cdots \times \Ncirc{\ell_k}\right)\] be the collection of all pairs $(\underline{p}, \underline{Q})$ such that 
\begin{align*} 
\underline{p}=(n^1,(n^2_i)_{i\in\mathbf{n^1}},\dots,(n^k_i)_{i\in\mathbf{n^{k-1}}}), \quad \quad
\underline{Q}=(\underline{q}^1,\cdots ,(\underline{q}^k_j)_{j\in \mathbf{n^{k-1}}})
\end{align*}
so that the composite $\underline{p} \circ \underline{Q}$ is defined (i.e., $\underline{q}^j_i\in \Ncirc{\ell_j}[n^j_i]$).  

\begin{remark} \label{rem:trees-n-stuff}
    It is convenient to think of an element $\underline{p}=(n^1,\dots,(n^k_i))\in \Ncirc{k}[t]$ as describing a family of planar rooted trees (see, e.g. \cite{Ching-thesis}) with $t$ leaves and $k$ levels. More precisely, the numbers $n^j_i$ describe the valence (number of input edges) to the $i$-th node at the $j$-th level, and a tree in this family is determined by a family of morphisms $\varphi_j \colon \mathbf{n^j}\to\mathbf{n^{j-1}}$ for $1\leq j < k$ such that $|\varphi_j^{-1}(i)|=n_i^j$ for all $i,j$. 
    
    Let $\underline{Q}$ be so that $\underline{p}\circ \underline{Q}$ is defined. From this perspective, a tree in the family corresponding to $\underline{p}\circ\underline{Q}$ is build by ``blowing up'' each node $n^j_i$ from $\underline{p}$ by a tree from the family corresponding to the profile $\underline{q^j_i}$ from $\underline{Q}$.
\end{remark}

\begin{definition} \label{def:tensor} We define the \textit{tensor $\tensorhat$} of reduced $\Nl$-objects $\Cal{Q}^1,\cdots,\Cal{Q}^k$ to be the left Kan extension of the following \begin{equation} \xymatrix{ 
    \coprod_{k\geq 0} \left( \Ncirc{k} \ltimes (\coprod_{\ell_1,\cdots,\ell_k\geq 0}\Ncirc{\ell_1} \times \cdots \times \Ncirc{\ell_k}) \right) \ar[rr]^-{\Cal{Q}^1 \circhat \cdots \circhat \Cal{Q}^k} \ar[d]^-{(\underline{p}, \underline{Q}) \mapsto \underline{p} \circ \underline{Q}} 
    && \mathsf{C} 
    \\ \coprod_{\ell\geq 0} \mathbf{N}^{\circhat \ell} \ar[rr]^-{\Cal{Q}^1 \tensorhat \cdots \tensorhat \Cal{Q}^k}_-{\text{left Kan ext.}} 
    && \mathsf{C} }\end{equation}
such that if $\underline{p}\circ \underline{Q}\in \Ncirc{\ell_1+\cdots+\ell_k}[t]$, then \[  (\Cal{Q}^1 \circhat \cdots \circhat \Cal{Q}^k)(\underline{p}\circ \underline{Q}; t) :=  \Cal{Q}_{\ell_1}^1 (\underline{q}^1;n^1) \otimes \bigotimes_{i\in \mathbf{n^1}} \Cal{Q}_{\ell_2}^{2}(\underline{q}^2_i;n^2_i) \cdots \otimes \bigotimes_{i\in\mathbf{n^{k-1}}} \Cal{Q}^k_{\ell_k}(\underline{q}^k_i;n^k_i) . \]
\end{definition}

Note then that $(\Cal{Q}^{\tensorhat k})_{\ell} \cong \coprod_{\ell_1+\dots+\ell_k=\ell} \Cal{Q}_{\ell_1} \hat{\circ} \cdots \hat{\circ} \Cal{Q}_{\ell_k}$, more specifically: \begin{equation}
   (\Cal{Q}^{\tensorhat k})_{\ell}(\underline{p};t) \cong \coprod_{\ell_1+\dots+\ell_k=\ell} \coprod_{\underline{p}=\underline{p}' \circ \underline{Q}} \; (\Cal{Q} \circhat \cdots \circhat \Cal{Q})(\underline{p}\circ \underline{Q}; t)
    \end{equation}
where we note that the summands $\ell_j$ are ordered.

\begin{definition} \label{ccp} 
Let $\Cal{P}$ and $\Cal{Q}$ be reduced $\Nl$-objects in $\mathsf{C}$. Their \textit{nonsymmetric composition product} $\circdot$ is defined as the coend $\Cal{P}_{-} \otimes_{\mathsf{N}} \Cal{Q}^{\tensorhat -}$ where $\mathsf{N}$ denotes the category of finite sets $\mathbf{n}$ for $n\geq 0$ with only identity morphisms. That is, \[(\Cal{P}{\odot} \Cal{Q})_{\ell}\cong \coprod_{k\geq 0} \Cal{P}_k \tensordot (\Cal{Q}^{\hat{\otimes} k})_{\ell}.
\]
\end{definition}

We use the notation $\tensordot$ to designate the product $\Cal{P}_k \tensordot (\Cal{Q}_{\ell_1} \circhat \cdots \circhat \Cal{Q}_{\ell_k})$ is evaluated at a profile $(\underline{p};t)$ as follows \[(\Cal{P}_k \tensordot (\Cal{Q}_{\ell_1}\circhat \cdots \circhat \Cal{Q}_{\ell_k})) (\underline{p};t) \cong \coprod_{\underline{p}=\underline{p}' \circ \underline{Q}} \Cal{P}_k(\underline{p}';s') \otimes (\Cal{Q}^1 \circhat \cdots \circhat \Cal{Q}^k)(\underline{p}\circ \underline{Q}; t) 
\] 
where $\underline{p}'\in\Ncirc{k}[s']$ and $\underline{Q}$ is a family $(\underline{q}_i^j)$ as in (\ref{circ-dot-composite}) with $\underline{q}_i^j\in\Ncirc{\ell_{j}}[s_i^j]$.

We necessarily then have \[\label{s-def}
\underline{p}'=\big(s^1, (s^2_1,\dots,s^2_{s^1}), \dots , (s^k_1,\dots, s^k_{s^{k-1}})\big)\] and can further describe $\Cal{P} \circdot \Cal{Q}$ as \begin{align}\label{p=p'Q} 
    (\Cal{P} \circdot \Cal{Q})_{\ell}(\underline{p};t) \cong \coprod_{\ell_1+\dots+\ell_k=\ell} \coprod_{\underline{p}=\underline{p}' \circ \underline{Q}} \Cal{P}_k(\underline{p}';s') \otimes \bigotimes_{j=1}^{k} \left( \bigotimes_{i\in\mathbf{n^j}} \Cal{Q}_{\ell_j}(\underline{q}^j_i;n^j_i) \right)
    \end{align}

\begin{example} 
We will evaluate $(\Cal{P}\circdot\Cal{Q})_3$ at \[\underline{p}=(n, (k_i)_{i\in \mathbf{n}}, (t_j)_{j\in \mathbf{k}})\in\Ncirc{3}[t]\] for $\Cal{P}, \Cal{Q}$ reduced $\Nl$-objects. Set $k:=k_1+\dots+k_n$, we observe \[ \left(\Cal{P}_2\tensordot(\Cal{Q}_1\hat{\circ}\Cal{Q}_2) \right)(\underline{p};t)=
\coprod_{\underline{p}=(n,(\underline{q}_1\amalg \cdots \amalg \underline{q}_n))} \Cal{P}_2(n, (s_1,\dots,s_n); t)\otimes \left( \Cal{Q}_1(n;n)\otimes \bigotimes_{i\in\mathbf{n}} \Cal{Q}_2 (\underline{q}_i;s_i)\right)\]
where  $\underline{q}_i\in \Ncirc{2}[s_i]$. 

Using the language of Remark \ref{rem:trees-n-stuff}, we think of the above as partitioning the set of nodes $(t_j)_{j\in\mathbf{k}}$ from $\underline{p}$ into $n$ sets of size $k_1,\dots,k_n$, e.g., by defining a map $\varphi\colon \mathbf{k}\to\mathbf{n}$ such that $|\varphi^{-1}(i)|=k_i$ for $i=1,\dots,n$. Such a partition determines $n$ profiles $\underline{q}_i=(k_i,(t_j)_{j\in\varphi^{-1}(i)}) \in \Ncirc{2}[s_i]$ for $i=1,\dots,n$ where necessarily $s_i$ is the sum $\sum_{j\in \varphi^{-1}(i)} t_j$. This precisely determines all possible ways of expressing the family of trees associated to $\underline{p}$ by a ``vertex blowup'' of the form $\underline{p}=\underline{p}'\circ \Cal{Q}$, where $\underline{p}'\in \Ncirc{2}[t]$, $\underline{q}^1\in \Ncirc{1}[n]$ and each $\underline{q}^2_i\in \Ncirc{2}$. The term $\left(\Cal{P}_2\tensordot(\Cal{Q}_1\hat{\circ}\Cal{Q}_2) \right)(\underline{p};t)$ is then obtained by using $\Cal{P}$ to evaluate $\underline{p}'$ and $\Cal{Q}$ to evaluate the profiles from $\underline{Q}$. 

Similarly, \begin{align*}  
(\Cal{P}_2\tensordot (\Cal{Q}_2 \circhat \Cal{Q}_1))(\underline{p};t) &
=\Cal{P}_2 \left(k, (t_j)_{j\in\mathbf{k}};t\right)\otimes \left( \mathcal{Q}_2(n, (k_i)_{i\in \mathbf{n}}; k) \otimes \bigotimes_{j\in\mathbf{k}} \Cal{Q}_1(t_j;t_j) \right), 
 \\
(\Cal{P}_1\tensordot \Cal{Q}_3)(\underline{p};t) & = \Cal{P}_1(t;t)\otimes \Cal{Q}_3(\underline{p};t),
 \\
(\Cal{P}_3\tensordot(\Cal{Q}_1\circhat\Cal{Q}_1\circhat\Cal{Q}_1))(\underline{p};t) & 
=\Cal{P}_3(\underline{p};t)\otimes \left( \Cal{Q}_1(n;n) \otimes \bigotimes_{i\in\mathbf{n}} \Cal{Q}_1(k_i;k_i) \otimes \bigotimes_{j\in\mathbf{k}} \Cal{Q}_1(t_j;t_j)\right). 
\end{align*}
\end{example}

\begin{proposition} \label{N_l-monoidal}
The category of $\Nl$-objects equipped with the composition product $\circdot$ is monoidal.
\end{proposition}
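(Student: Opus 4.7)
My plan is to verify the three pieces of a monoidal structure: a unit, an associator, and the pentagon and triangle coherence axioms, working throughout with reduced $\Nl$-objects and reading the definition \ref{p=p'Q} for $\circdot$ as a colimit indexed by profile decompositions.

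First I would identify the unit $\Cal{I}$ as the reduced $\Nl$-object with $\Cal{I}_1((t);t) = \mathbf{1}$ for each $t\geq 1$ and $\Cal{I}_\ell = \ptC$ otherwise (apart from the forced $\Cal{I}_0(\emptyset;1) = \mathbf{1}$). This is the natural $\Nl$-lift of the operadic unit $I$. For $\Cal{I}\circdot \Cal{P}$ the only summand in \ref{p=p'Q} that is not $\ptC$ is the one with $k=1$, $\underline{p}'=(t)$, and $\underline{q}^1 = \underline{p}$, which is canonically $\Cal{P}_\ell(\underline{p};t)$. For $\Cal{P}\circdot \Cal{I}$ the reducedness of $\Cal{I}$ forces $\ell_j = 1$ and $\underline{q}^j_i = (n^j_i)$ for every pair $(i,j)$; a direct unwinding of $\underline{p}'\circ \underline{Q}$ under these constraints shows $\underline{p}'\circ \underline{Q} = \underline{p}'$, hence $\underline{p}' = \underline{p}$, and the surviving summand again recovers $\Cal{P}_\ell(\underline{p};t)$.

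The main content is associativity, which I would reduce to a profile-level bijection. For fixed $\underline{p}\in \Ncirc{\ell}$ I claim that triples $(\underline{p}', \underline{Q}, \underline{R})$ exhibiting $\underline{p} = (\underline{p}'\circ \underline{Q})\circ \underline{R}$ are in natural bijection with triples $(\underline{p}', \underline{Q}, \underline{R}')$ exhibiting $\underline{p} = \underline{p}' \circ (\underline{Q}\circ \underline{R}')$, where $\underline{Q}\circ \underline{R}'$ denotes the levelwise composition of the family $\underline{Q}$ with a compatible family of families $\underline{R}'$. In the tree picture of Remark \ref{rem:trees-n-stuff} both sides realize the same three-stage blowup of $\underline{p}'$: first by $\underline{Q}$, then by $\underline{R}$; the only difference is whether the $\underline{R}$-data is applied globally after forming $\underline{p}'\circ \underline{Q}$, or first grouped by which $\underline{q}^j_i$ its blocks blow up and then composed within each $\underline{q}^j_i$. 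Once this bijection is in place, the tensor factors on both sides match after a symmetry reshuffle in $(\mathsf{C},\otimes,\mathbf{1})$, and the universal property of the left Kan extension in Definition \ref{def:tensor} promotes the profile-level bijection to a natural isomorphism $(\Cal{P}\circdot \Cal{Q})\circdot \Cal{R} \xrightarrow{\cong} \Cal{P}\circdot (\Cal{Q}\circdot \Cal{R})$.

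Mac Lane's pentagon and triangle then fall out by checking that both bracketings of a four-fold composite induce the same bijection on quadruples $(\underline{p}', \underline{Q}, \underline{R}, \underline{S})$ with $\underline{p} = \underline{p}'\circ \underline{Q}\circ \underline{R}\circ \underline{S}$: both correspond to the same four-stage blowup of $\underline{p}'$, and the unit isomorphisms are compatible with the associator because blowing up by the $\Cal{I}$-profile $(n)\in \Ncirc{1}[n]$ is a no-op in the tree picture. The real obstacle is the combinatorial bookkeeping underpinning the profile-level associativity: carefully tracking how the weights $n^j_i$, the level indices $\ell_j$, and the amalgamation operation interact when one regroups the $\underline{R}$-blowups by the $\underline{Q}$-nodes they act on. Once this bijection is handled with care, every other step is a formal consequence of Definition \ref{def:tensor} and the symmetric monoidal structure on $\mathsf{C}$.
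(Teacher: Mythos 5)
Your associativity argument is sound and is in fact the paper's: the regrouping bijection you describe (collecting the $\underline{R}$-blowups according to which $\underline{q}^j_i$ they refine) is exactly the isomorphism
$\bigl( \Cal{P}_n \tensordot (\Cal{Q}_{k_1}\circhat \cdots \circhat \Cal{Q}_{k_n})\bigr)\tensordot \bigl(\Cal{R}_{\ell_{1,1}}\circhat \cdots \circhat \Cal{R}_{\ell_{n,k_n}}\bigr) \cong \Cal{P}_n \tensordot \bigl( (\Cal{Q}_{k_1} \tensordot (\Cal{R}_{\ell_{1,1}}\circhat \cdots ) ) \circhat \cdots \bigr)$
that the paper's proof displays, and the coherence discussion is fine.

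The genuine gap is in your unit. You set $\Cal{I}_0(\emptyset;1)=\mathbf{1}$ so that $\Cal{I}$ is reduced, but then your claim that ``the reducedness of $\Cal{I}$ forces $\ell_j=1$'' in the unwinding of $\Cal{P}\circdot\Cal{I}$ is false: the indexing in Definition \ref{def:tensor} allows $\ell_j=0$, and a decomposition with $\underline{q}^j_i=\emptyset\in\Ncirc{0}[1]$ is admissible whenever the corresponding entry $n^j_i$ equals $1$. With your $\Cal{I}$ these terms do not vanish and contribute extra summands. Concretely, if $\Cal{P}_2((1,(1));1)=A\neq\ptC$ then $(\Cal{P}\circdot\Cal{I})_1((1);1)$ contains both $\Cal{P}_1((1);1)$ (from $k=1$, $\ell_1=1$) and a spurious copy of $A$ (from $k=2$, $(\ell_1,\ell_2)=(1,0)$, $\underline{p}'=(1,(1))$); likewise $(\Cal{I}\circdot\Cal{P})_0(\emptyset;1)$ acquires two copies of $\mathbf{1}$, one from the $k=0$ summand and one from $k=1$. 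The paper takes $\Cal{I}_1(n;n)=\mathbf{1}$ and $\Cal{I}=\ptC$ \emph{everywhere else}, in particular $\Cal{I}_0=\ptC$, so its unit is not reduced; with that choice the $\ell_j=0$ and $k=0$ terms genuinely die and your unwinding goes through verbatim. (There is an internal tension in the paper here, since $\circdot$ is stated only for reduced objects while the unit is not reduced; the coproduct formula for $\circdot$ extends without change, but the unit must be taken trivial at level $0$, and your ``forced'' value is exactly the wrong resolution of that tension.)
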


\begin{proof}
It is straightforward to verify that $\circdot$ has a two-sided unit, $\mathcal{I}$, given by $\mathcal{I}_1(n;n)=\mathbf{1}$ and $\Cal{I}=\ptC$ otherwise. For $\Nl$-objects $\Cal{P},\Cal{Q},\Cal{R}$, there is a natural isomorphism $(\Cal{P}\circdot \Cal{Q})\circdot \Cal{R}\cong \Cal{P}\circdot (\Cal{Q}\circdot \Cal{R})$ induced by the natural isomorphisms \begin{align} \label{asoc-circdot}
\Big( \Cal{P}_n &\tensordot \big(\Cal{Q}_{k_1}\circhat \cdots \circhat \Cal{Q}_{k_n}\big)\Big)\tensordot \big(\Cal{R}_{\ell_{1,1}}\circhat \cdots \circhat \Cal{R}_{\ell_{n,k_n}}\big) 
\\
&\cong \Cal{P}_n \tensordot \Big( \big(\Cal{Q}_{k_1} \tensordot (\Cal{R}_{\ell_{1,1}}\circhat \cdots \circhat \Cal{R}_{\ell_{1,k_1}}) \big) \circhat \cdots \circhat \big(\Cal{Q}_{k_n} \tensordot (\Cal{R}_{\ell_{n,1}}\circhat \cdots \circhat \Cal{R}_{\ell_{n,k_n}}) \big)\Big) \nonumber
\end{align}
obtained by a tedious but ultimately straightforward calculation. The remainder of the monoidal category axioms follow from similar observations. 
\end{proof}

\begin{definition} 
A \textit{nonsymmetric $\Nl$-operad} is a reduced $\Nl$-object $\Cal{P}$ which is a monoid with respect to $\circdot$. That is, there are unital and associative maps of $\Nl$-objects $\xi\colon \Cal{P}\circdot \Cal{P}\to\Cal{P}$ and $\varepsilon\colon \Cal{I}\to \Cal{P}$, i.e., such that the following diagrams commute \[ \xymatrix{ \Cal{P}\circdot \Cal{P} \circdot \Cal{P} \ar[r]^-{\xi\circdot\id} \ar[d]^-{\id\circdot \xi}& \Cal{P}\circdot \Cal{P} \ar[d]^-{\xi} \\ \Cal{P}\circdot \Cal{P} \ar[r]^-{\xi} & \Cal{P}} \hspace{1cm} \xymatrix{ \Cal{P} \circdot \Cal{I} \ar[r]^-{\id\circdot \varepsilon} & \Cal{P}\circdot \Cal{P} \ar[d]^-{\xi} & \Cal{I} \circdot \Cal{P} \ar[l]_-{\varepsilon \circdot \id} \\ & \Cal{P} \ar[ul]^-{\cong} \ar[ur]_{\cong} }\]
\end{definition}

\subsection{Algebras over a nonsymmetric $\Nl$-operad}
Let $(\widehat{-})$ denote the inclusion of $\mathbf{N}$-colored objects to $\Nl$-objects given by \[\text{$\widehat{X}_0(\emptyset;n)=X[n]$ and $\widehat{X}_k=\ptC$ for $ k\geq 1$.}\] 

Note that $\widehat{X}$ is not reduced, but a straightforward modification of Definition \ref{def:tensor} provides that $\big(\widehat{X}^{\tensorhat n}\big)_0\cong X^{\circhat n}$ and $\big(\widehat{X}^{\tensorhat n}\big)_k\cong \ptC$ for $k\geq 1$. Similarly, $(\widehat{-})$ is left adjoint to $\mathrm{Ev}_0$ which takes values in nonsymmetric sequences and is defined at an $\Nl$-object $\Cal{P}$ as \[(\mathrm{Ev}_0\Cal{P})[n]:=\Cal{P}_0(\emptyset;n).\] 

If $\Cal{P}$ is a nonsymmetric $\Nl$-operad then $\Cal{P}\circdot \widehat{X}$ remains concentrated at level $0$ and hence defines a monad on $\mathbf{N}$-colored objects \[\Cal{P}\circdot(-)\colon X\mapsto \mathrm{Ev}_0(\Cal{P}\circdot \widehat{X}).\] 

\begin{definition}  \label{def-nonsym}
We say that an $\mathbf{N}$-colored object $X$ is an \textit{algebra} over an nonsymmetric $\Nl$-operad $\mathcal{P}$ if there is an action map \[\mathcal{P}\circdot (X)\xrightarrow{\;\; \mu\;\;} X\] which is associative and unital in that the following diagrams commute. \[ \xymatrix{ \Cal{P}\circdot \Cal{P} \circdot (X) \ar[r]^<(.3){\xi\circdot\id} \ar[d]^{\id\circdot \mu}& \Cal{P}\circdot (X) \ar[d]^{\mu} \\ \Cal{P}\circdot (X)\ar[r]^{\mu} & X } \hspace{1cm} \xymatrix{ \Cal{P}\circdot (X) \ar[r]^{\mu} & X \\ \Cal{I}\circdot (X)  \ar[u]^{\varepsilon} \ar[ur]_{\cong} }\]
\end{definition} 

We denote by $\AlgN{P}$ the category of algebras over a nonsymmetric $\Nl$-operad $\Cal{P}$ along with $\Cal{P}$-action preserving maps. Note that an action map $\mu$ consists of pieces \[\mu_k\colon \mathcal{P}_{k} \tensordot (X^{\circhat k})\to X\] for $k\geq 0$ and that $\AlgN{P}$ is complete and cocomplete and moreover that limits are built in the underlying category of $\mathbf{N}$-colored objects. 

\subsection{Change of $\Nl$-operads adjunction}
Given a map of nonsymmetric $\Nl$-operads $\sigma\colon \Cal{P}\to \Cal{Q}$ and a $\Cal{P}$-algebra $X$ we define $\Cal{Q} \circdot_{\Cal{P}}(X)$ by the reflexive coequalizer \begin{equation*}
\label{circ-dot-over}
    \Cal{Q}\circdot_{\Cal{P}}(X):= \colim \left( \xymatrix{
    \Cal{Q}\circdot  (X) & \ar@<-1ex>[l] \ar@<.5ex>[l]  \Cal{Q}\circdot\Cal{P}\circdot (X) }\right).
\end{equation*}
The top map above is given by $\Cal{P}\circdot (X)\xrightarrow{\;\mu_{\Cal{P}}\;} X$ and the bottom is induced by the composite \[\Cal{Q}\circdot \Cal{P}\xrightarrow{\,\id \circdot \sigma\,} \Cal{Q}\circdot \Cal{Q}\xrightarrow{\;\xi_{\Cal{Q}}\;} \Cal{Q}.\] 

The resulting object $\Cal{Q}\circdot_{\Cal{P}}(X)$ inherits a natural $\Cal{Q}$ algebra structure and the construction fits into an adjunction as in the following proposition.

\begin{proposition} 
Given a map of nonsymmetric $\Nl$-operads $\mathcal{P}\xrightarrow{\; \sigma \;}\mathcal{Q}$ there is a change of nonsymmetric $\Nl$-operads adjunction \begin{equation*} \xymatrix{
    \AlgN{P} \ar@<.75 ex>[r] ^{\Cal{Q}\circdot_{\Cal{P}}(-)} &
    \AlgN{Q} \ar@<.75ex>[l]^{\sigma^*} }
\end{equation*}
with right adjoint $\sigma^*$ given by restriction along $\sigma$. 
\end{proposition}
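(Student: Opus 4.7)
The plan is to mimic the classical change-of-operads adjunction argument (as in Rezk's thesis or \cite{Har-1} for symmetric sequences) adapted to the $\Nl$-setting, exploiting the monoidal structure $(\SymSeq_{\Nl}, \circdot, \Cal{I})$ established in Proposition \ref{N_l-monoidal}.

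First I would verify that the functors are well-defined. For $\sigma^*$: given $Y\in \AlgN{Q}$ with action $\mu_{\Cal{Q}}\colon \Cal{Q}\circdot(Y)\to Y$, the composite
\[
\Cal{P}\circdot(Y)\xrightarrow{\;\sigma\circdot \id\;}\Cal{Q}\circdot(Y)\xrightarrow{\;\mu_{\Cal{Q}}\;} Y
\]
is associative and unital because $\sigma$ is a map of $\Nl$-operads (compatibility of $\sigma$ with $\xi$ and $\varepsilon$ gives exactly the associativity and unitality diagrams for the restricted action). For $\Cal{Q}\circdot_{\Cal{P}}(-)$: the reflexive coequalizer defining it inherits a $\Cal{Q}$-action via the left-multiplication map $\xi_{\Cal{Q}}\circdot \id\colon \Cal{Q}\circdot \Cal{Q}\circdot(X)\to \Cal{Q}\circdot(X)$, once one checks this map descends to the coequalizer. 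The key input here is that $\Cal{Q}\circdot(-)$ preserves reflexive coequalizers of $\Cal{P}$-algebras; this is the standard fact that monads arising from suitable composition-product monoidal structures preserve such coequalizers, and follows from the fact that $\circdot$ is built from coends and coproducts (which commute with reflexive coequalizers on each side; see the expansion \eqref{p=p'Q}).

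Second, I would construct the adjunction directly by producing a unit and counit. The unit $\eta_X\colon X\to \sigma^*(\Cal{Q}\circdot_{\Cal{P}}(X))$ is obtained from the composite
\[
X\xleftarrow{\;\mu_{\Cal{P}}\;}\Cal{P}\circdot(X)\xrightarrow{\;\sigma\circdot \id\;}\Cal{Q}\circdot(X)\to \Cal{Q}\circdot_{\Cal{P}}(X),
\]
or more precisely from the universal property together with the unit $\varepsilon\colon \Cal{I}\to\Cal{Q}$ giving $X\cong \Cal{I}\circdot(X)\to \Cal{Q}\circdot(X)\to \Cal{Q}\circdot_{\Cal{P}}(X)$. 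The counit $\epsilon_Y\colon \Cal{Q}\circdot_{\Cal{P}}(\sigma^*Y)\to Y$ is induced by $\mu_{\Cal{Q}}\colon \Cal{Q}\circdot(Y)\to Y$, which coequalizes the two parallel arrows precisely because the $\Cal{P}$-action on $\sigma^*Y$ is defined through $\sigma$ and $\mu_{\Cal{Q}}$ is associative. The triangle identities then reduce to the unitality and associativity axioms for $\mu_{\Cal{Q}}$ together with the reflexive-coequalizer universal property.

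The main obstacle, as already flagged, is ensuring that $\Cal{Q}\circdot(-)$ commutes with the reflexive coequalizer in question so that $\Cal{Q}\circdot_{\Cal{P}}(X)$ genuinely inherits a $\Cal{Q}$-action (and not just a pre-action that needs coherence checks). In our setting $\circdot$ is computed as in \eqref{p=p'Q} by coproducts of tensor products indexed on decompositions $\underline{p}=\underline{p}'\circ\underline{Q}$; since $\otimes$ and coproducts preserve reflexive coequalizers in any cocomplete closed symmetric monoidal $\mathsf{C}$, the required preservation holds levelwise and profile-wise. Once this is established, associativity of $\xi_{\Cal{Q}}$ and the construction of the coequalizer mirror the proof in \cite{Rezk-thesis,Har-1} verbatim, and the adjunction isomorphism $\mathrm{Hom}_{\AlgN{Q}}(\Cal{Q}\circdot_{\Cal{P}}(X),Y)\cong \mathrm{Hom}_{\AlgN{P}}(X,\sigma^*Y)$ follows by a direct diagram chase.
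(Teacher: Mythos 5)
Your proposal is correct and is exactly the standard argument that the paper leaves implicit: the paper simply defines the reflexive coequalizer $\Cal{Q}\circdot_{\Cal{P}}(X)$ and asserts the adjunction without proof, and you correctly identify the one genuine technical input, namely that $\Cal{Q}\circdot(-)$ preserves reflexive coequalizers (which holds because $\circdot$ is assembled from coproducts and finite tensor powers as in the displayed formula for $(\Cal{P}\circdot\Cal{Q})_{\ell}(\underline{p};t)$, and finite tensor powers preserve reflexive coequalizers in a closed symmetric monoidal category). The only blemish is your first description of the unit, which involves an arrow pointing the wrong way; your corrected version via $X\cong \Cal{I}\circdot(X)\to \Cal{Q}\circdot(X)\to \Cal{Q}\circdot_{\Cal{P}}(X)$ is the right one.
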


\subsection{A forgetful functor to $\mathbf{N}$-colored operads} \label{N_l-is-N}  We describe forgetful functor $\mathbb{U}$ from $\Nl$-operads to $\mathbf{N}$-colored operads (specifically, nonsymmetric $\mathbf{N}$-colored operads). Given $\underline{p}=((n^1,\cdots, (n^{\ell}_i)_{i\in \mathbf{n^{\ell-1}}} )\in\Ncirc{\ell}$, we set $s(\underline{p})$ to be the unordered list of the elements of the levels of $\underline{p}$, i.e., \[s(\underline{p}):=\left\{n^j_i: j\in\{1,\cdots,n\}, i\in \mathbf{n^j} \right\}. \] 

Given an $\Nl$-object $\mathcal{Q}$ we define $\mathbb{U}\Cal{Q}$ by \begin{equation} 
    (\mathbb{U}\mathcal{Q})(c_1,\dots,c_k;t):=\coprod_{s(\underline{p})=(c_1,\dots,c_k)} \mathcal{Q}_{\ell}(\underline{p};t)
\end{equation}
where the coproduct ranges over $\underline{p}\in \coprod_{\ell \geq 0} \Ncirc{\ell}$. We leave the proof of the following proposition to the reader.

\begin{proposition}
If $\Cal{P}$ is an $\Nl$-operad then $\mathbb{U}\Cal{P}$ is a (nonsymmetric) $\mathbf{N}$-colored operad. Furthermore, the categories $\AlgN{P}$ and $\mathsf{Alg}_{\mathbb{U}\Cal{P}}$ are equivalent. 
\end{proposition}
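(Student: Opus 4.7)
The plan is to proceed in two steps, matching the two assertions in the statement. First, I construct the nonsymmetric $\mathbf{N}$-colored operad structure on $\mathbb{U}\Cal{P}$ directly from the $\Nl$-operad structure maps $\xi \colon \Cal{P}\circdot \Cal{P} \to \Cal{P}$ and $\varepsilon \colon \Cal{I}\to \Cal{P}$; second, I set up a natural bijection between $\Cal{P}$-actions and $\mathbb{U}\Cal{P}$-actions on a fixed $\mathbf{N}$-colored object. Both steps are essentially bookkeeping given the framework already developed.

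For the operad structure on $\mathbb{U}\Cal{P}$: the unit at a color $c\in \mathbf{N}$ is the composite $\mathbf{1}=\Cal{I}_1(c;c)\xrightarrow{\varepsilon} \Cal{P}_1(c;c)\hookrightarrow (\mathbb{U}\Cal{P})(c;c)$, where the final map is the summand inclusion associated with the profile $c\in \Ncirc{1}[c]$. For composition, a summand of $(\mathbb{U}\Cal{P})(c_1,\ldots,c_n;d)\otimes \bigotimes_i (\mathbb{U}\Cal{P})(p_{i,1},\ldots,p_{i,k_i};c_i)$ indexed by profiles $\underline{p}\in \Ncirc{\ell_0}[d]$ and $\underline{q}^i\in \Ncirc{\ell_i}[c_i]$ (with the prescribed $s$-values) may, after possibly padding certain $\underline{q}^i$'s with identity layers $\varepsilon$ so that the profiles grafted at each level $j$ of $\underline{p}$ share a common depth $\ell_j$, be identified with a term of $\Cal{P}\circdot \Cal{P}$; applying $\xi$ then lands in the summand of $(\mathbb{U}\Cal{P})(p_{1,1},\ldots,p_{n,k_n};d)$ indexed by the composite profile $\underline{p}\circ \underline{Q}$, whose $s$-value is exactly the required concatenation of colors. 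Associativity and unitality for this colored composition inherit directly from the corresponding axioms for $\xi,\varepsilon$ together with the associator in \eqref{asoc-circdot}.

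For the equivalence $\AlgN{P}\cong \mathsf{Alg}_{\mathbb{U}\Cal{P}}$, both categories share the same underlying category of $\mathbf{N}$-colored objects, so it suffices to match action structures termwise. A $\Cal{P}$-action $\mu \colon \Cal{P}\circdot (X)\to X$ is assembled from components $\mu_k\colon \Cal{P}_k\tensordot X^{\circhat k}\to X$; evaluating $\mu_k$ at a profile $\underline{p}\in \Ncirc{k}$ with $s(\underline{p})=(c_1,\ldots,c_n)$ produces precisely a map $\Cal{P}_k(\underline{p};t)\otimes X[c_1]\otimes \cdots \otimes X[c_n]\to X[t]$, which is exactly the datum sought by a $\mathbb{U}\Cal{P}$-action on its summand indexed by $\underline{p}$. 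Re-assembling such summands over all $\underline{p}$ and all $k\geq 0$ yields a $\mathbb{U}\Cal{P}$-action, and the reverse correspondence is equally transparent. The compatibility of the two sets of associativity/unitality axioms under this bijection is built into the construction of the colored composition on $\mathbb{U}\Cal{P}$ above. The main obstacle throughout is the indexing verification: confirming that $s(\underline{p}\circ \underline{Q})$ equals $(p_{1,1},\ldots,p_{n,k_n})$ in the correct order, and that padding by identities is neutral with respect to the composition on $\mathbb{U}\Cal{P}$. Because padding only inserts unit factors via $\varepsilon$, both reduce to the associativity of $\circdot$ (Proposition \ref{N_l-monoidal}) and the unitality of $\Cal{P}$, so the proposition in the end follows from a straightforward but tedious diagram chase.
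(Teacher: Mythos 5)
The paper offers no proof of this proposition (it is explicitly left to the reader), so I can only assess your argument on its own terms, and it has a genuine gap at the padding step. The colored-operad composition you need must land in $(\mathbb{U}\Cal{P})(p_{1,1},\dots,p_{n,k_n};d)$, whose summands are indexed by profiles $\underline{r}$ with $s(\underline{r})=(p_{1,1},\dots,p_{n,k_n})$. Since $s(\underline{p}\circ\underline{Q})=\coprod_{j,i}s(\underline{q}^j_i)$, this works only if the composite is formed from the \emph{original} grafted profiles. But padding $\underline{q}^j_i$ to a profile of larger depth strictly enlarges its $s$-value (each inserted identity level contributes new entries, e.g.\ a string of $1$'s), so $\xi$ applied to the padded term lands in a summand of $(\mathbb{U}\Cal{P})(p_{1,1},\dots,p_{n,k_n},1,\dots,1;d)$ --- a different hom-object. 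Your claim that ``padding by identities is neutral'' conflates composing with the unit $\id_c\in(\mathbb{U}\Cal{P})(c;c)$, which is neutral but has color profile $(c)$, with re-indexing a summand by a padded profile, which changes the ambient hom-object. So your composition is simply undefined on summands where the grafted profiles have unequal depths at a common level of $\underline{p}$, which are exactly the summands where $\xi$ alone gives nothing.

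The gap is not cosmetic. Take $\Cal{P}=\Oper$ in sets, $f$ in the summand $\underline{p}=(2,(1,1))$ of $(\mathbb{U}\Oper)(2,1,1;2)$, and graft $\id_2$ at the color $2$, an element $g$ of the summand $(1,(1))$ of $(\mathbb{U}\Oper)(1,1;1)$ at the first color $1$, and $\id_1$ at the second. Compatibility with algebra actions (which your second paragraph requires) forces $\gamma(f;\id_2,g,\id_1)$ to act on an operad $X$ by $(x,y_1,y_2,z)\mapsto x(y_1\circ y_2,\,z)$; but $(\mathbb{U}\Oper)(2,1,1,1;2)$ has exactly two summands, indexed by $(1,(2),(1,1))$ and $(1,(1),(1),(2))$, and each acts by applying a unary operation \emph{outside} the binary one --- a genuinely different operation in, say, an endomorphism operad. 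The underlying combinatorial issue is that profiles are leveled trees, and leveled trees are not closed under grafting trees of unequal depths at vertices on a common level. A correct proof must confront this head on, either by producing the missing compositions by some mechanism other than $\xi$ plus padding or by adjusting the definition of $\mathbb{U}$; as written, your argument does not close this gap, and the matching of algebra structures in your second step inherits it, since the $\mathbb{U}\Cal{P}$-algebra axioms quantify over precisely these problematic summand combinations.
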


\subsection{Symmetric $\Nl$-objects} We now impart symmetric group actions on our $\Nl$-objects in a way that captures operadic composition. Denote by $\Cal{I}^{\Sigma}$ the $\Nl$-object in $(\mathsf{C},\otimes, \mathbf{1})$ with \[\Cal{I}^{\Sigma}_{\ell}(\underline{p};t)=\begin{cases} 
    \mathbf{\Sigma}[n] & \ell=1, \underline{p}=n=t \\
    \ptC & \text{otherwise} 
    \end{cases}\] 
    
Recall here that $\mathbf{\Sigma}[n]=\coprod_{\sigma\in\Sigma_n} \mathbf{1}$. Note that $\Cal{I}^{\Sigma}$ is a nonsymmetric $\Nl$-operad whose composition maps are induced by the block matrix inclusions \[ \Sigma_n \times (\Sigma_{k_1}\times\cdots\times \Sigma_{k_n}) \to \Sigma_{k_1+\cdots+k_n}. \]  Moreover the data of an algebra over $\Isym$ is precisely that of a symmetric sequence; i.e.,  $\mathsf{Alg}^{\omega}_{\Cal{I}^{\Sigma}}\cong \SymSeq$. 

\begin{definition}
An $\Nl$-object $\Cal{P}$ \textit{symmetric} if $\Cal{P}$ has compatible right and left actions of $\Cal{I}^{\Sigma}$ in that the following diagram must commute \begin{equation*} 
    \xymatrix{
    \Isym\circdot \Cal{P} \circdot \Isym \ar[rr]^{\mu_{\ell}\circdot\id} \ar[d]^{\id\circdot \mu_{r}} && \Cal{P}\circdot \Isym \ar[d]^{\mu_r}
    \\
    \Isym\circdot \Cal{P} \ar[rr]^{\mu_{\ell}} && \Cal{P} }
\end{equation*}
where $\mu_{\ell}$ (resp. $\mu_r$) denotes the left (resp. right) action map of $\Isym$ on $\Cal{P}$. 
\end{definition}

In other words, a symmetric $\Nl$-object is an $(\Isym,\Isym)$-bimodule. Note that $\Isym \circdot(X)\cong \Sigma{\cdot} X$ is the free symmetric sequence on $X$ (see also Remark \ref{def:Sigma-copowered}).

\subsection{Symmetric $\Nl$-operads}

\begin{definition} Let $\Cal{P},\Cal{Q}$ be $(\Isym,\Isym)$-bimodules. We define their \textit{symmetric composition product}, denoted $\Cal{P}\circdot_{\Sigma} \Cal{Q}$, as the (reflexive) coequalizer (calculated in symmetric $\Nl$-objects)\[ 
    \Cal{P}\circdot_{\Sigma} \Cal{Q} :=\Cal{P}\circdot_{\Cal{I}^{\Sigma}} \Cal{Q}\cong \colim \left( \xymatrix{
        \Cal{P}\circdot \Cal{Q} &
        \Cal{P}\circdot \Cal{I}^{\Sigma} \circdot \Cal{Q} \ar@<.5ex>[l] \ar@<-.75ex>[l] }
    \right) \]
where the two maps are induced by the left and right actions actions of $\Cal{I}^{\Sigma}$ on $\Cal{Q}$ and $\Cal{P}$.
\end{definition}

Note that $\Cal{P}\circdot_{\Sigma} \Cal{Q}$ inherits left and right $\Isym$ actions by those on $\Cal{P}$ and $\Cal{Q}$ respectively, and so remains an $(\Isym,\Isym)$-bimodule. Moreover, $\Cal{I}^{\Sigma}$ is a two-sided unit for $\circdot_{\Sigma}$ and symmetric $\Nl$-objects equipped with the product $(\circdot_{\Sigma}, \Isym)$ is a monoidal category. 
\begin{remark}
Since $\Isym$ is concentrated at level $1$, is it possible to further describe the object $\Cal{P}\circdot_{\Sigma} \Cal{Q}$ in terms of its constituent parts. In particular, \begin{equation*}
    (\Cal{P}\circdot_{\Sigma} \Cal{Q})_{\ell} \cong \coprod_{k\geq 0} \coprod_{\ell_1+\dots+\ell_k=\ell} \Cal{P}_{k} \tensordot_{\Sigma} ( \Cal{Q}_{\ell_1}\circhat \cdots \circhat \Cal{Q}_{\ell_k})
\end{equation*}
where $\Cal{P}_{k} \tensordot_{\Sigma} ( \Cal{Q}_{\ell_1}\circhat \cdots \circhat \Cal{Q}_{\ell_k})$ is obtained as the coequalizer \begin{equation*}
    \colim \left( \xymatrix{
        \Cal{P}_k \tensordot (\Cal{Q}_{\ell_1}\circhat \cdots \circhat \Cal{Q}_{\ell_k}) &
        \Big( \Cal{P}_k \tensordot (\underbrace{\Isym_1 \circhat \cdots \circhat \Isym_1}_k) \Big) \tensordot  (\Cal{Q}_{\ell_1}\circhat \cdots \circhat \Cal{Q}_{\ell_k}) \ar@<-1ex>[l] \ar@<.5ex>[l]
    } \right) 
\end{equation*}
such that the top is induced by the right action of $\Isym$ on $\Cal{P}$ and the bottom map is induced by the isomorphism (\ref{asoc-circdot}) and the left action of $\Isym$ on $\Cal{Q}$.
\end{remark}

\begin{definition} \label{def:sym-Nl}
    A \textit{symmetric $\Nl$-operad} is a reduced symmetric $\Nl$-object $\Cal{P}$, which is a monoid with respect to $\circdot_{\Sigma}$. That is, there is a multiplication map $\xi\colon \Cal{P}\circdot_{\Sigma}\Cal{P}\to \Cal{P}$ and unit map $\varepsilon \colon \Cal{I}^{\Sigma} \to \Cal{P}$ that satisfy the usual associativity and unitality conditions. 
\end{definition}

\subsection{Algebras over symmetric $\Nl$-operads} \label{sec:algebras}
We now define an algebra over a symmetric $\Nl$-operad $\Cal{P}$. Note than algebra over a symmetric $\Nl$-operad is a symmetric $\Nl$-object concentrated at level $0$, that is, an $\Isym$-algebra or symmetric sequence. As before, given a symmetric $\Nl$-operad $\Cal{P}$, let  \[\Cal{P}\circdot_{\Sigma}(-)\colon X\mapsto \mathrm{Ev}_0(\Cal{P} \circdot_{\Sigma}  \widehat{X})\] be the associated monad on $\SymSeq$.

\begin{definition} 
    A symmetric sequence $X$ is an \textit{algebra} over a symmetric $\Nl$-operad $\Cal{P}$ if there is an action map $\mu\colon \Cal{P}\circdot_{\Sigma} (X) \to X$ which is associative and unital (as in Definition \ref{def-nonsym} with $\circdot$ replaced by $\circdot_{\Sigma}$ q.v.)
\end{definition}

We denote by $\AlgSym{P}$ the category of symmetric $\Cal{P}$-algebras with $\Cal{P}$-algebra preserving maps; for simplicity we will frequently use $\Alg{P}$ instead when there is no room for confusion. We note that $\mu$ consists of maps \[\mu_k \colon \Cal{P}_k \tensordot_{\Sigma} (X^{\circhat k})\to X\] where the action of $\Isym$ on $X^{\circhat k}$ agrees with that for symmetric sequences discussed in Section \ref{sec:comp-prod-rmks}. Furthermore, \[\mu_0\colon I\cong \Cal{P}_0\tensordot_{\Sigma}  (X^{\circhat 0}) \to X\] gives a unit map for $X\in \Alg{P}$ and we note that an algebra $X$ over $\Cal{P}$ will always be reduced, i.e., $X[0]=\ptC$.

\begin{example}[Free symmetric $\Cal{P}$-algebra on a symmetric sequence] 
    Given a symmetric sequence $X$, the object $\Cal{P}\circdot_{\Sigma}(X)$ is the free $\Cal{P}$-algebra on $X$ and fits into an adjunction \[ \xymatrix{ 
        \SymSeq_{\mathsf{C}} \ar@<.75ex>[r]^-{\Cal{P}\circdot_{\Sigma}(-)} 
        & 
        \Alg{P} \ar@<.5ex>[l]^-{\Cal{U}} 
    }\] where $\Cal{U}$ is the forgetful functor. In particular, $\Oper \circdot_{\Sigma}(X)$ (see Definition \ref{def-of-Oper}) is the \textit{free operad} on $X$ (see, e.g.,  \cite[9.4]{AC-1}).
\end{example}

We leave the proof of the following to the reader as it follows from standard arguments as in \cite[3.29]{Har-1} or \cite[4.3]{Bor}.

\begin{proposition} If $(\mathsf{C},\otimes,\mathbf{1})$ is closed symmetric monoidal which contains all small limits and colimits, then all small limits and colimits exist in $\Alg{P}$. Limits and filtered colimits are built in the underlying category of symmetric sequences and are further reflected by the forgetful functor $\Cal{U}$.

General colimits shaped on a small diagram $\Cal{D}$ are constructed by the following (reflexive) coequalizer (whose colimits are constructed in $\SymSeq$): \[ \colim_{d\in\Cal{D}} X_d\cong \colim\left( \xymatrix{
     \Cal{P}\circdot_{\Sigma} \left( \colim_{d\in\Cal{D}} X_d \right) &
     \Cal{P}\circdot_{\Sigma} \left( \colim_{d\in\Cal{D}} \Cal{P}\circdot_{\Sigma}(X_d)\right) \ar@<.5ex>[l] \ar@<-.5ex>[l] 
    } \right).\]
\end{proposition}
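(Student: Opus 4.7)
The plan is to follow the standard Linton/Beck argument for categories of algebras over a monad, adapted to our $\circdot_{\Sigma}$ setting. For limits, given a diagram $X \colon \Cal{D} \to \Alg{P}$, form $L := \lim_d \Cal{U}(X_d)$ in $\SymSeq$. The composites $\Cal{P}\circdot_{\Sigma}(L) \to \Cal{P}\circdot_{\Sigma}(X_d) \to X_d$ assemble via the universal property of $L$ into a structure map $\mu_L \colon \Cal{P}\circdot_{\Sigma}(L) \to L$; associativity and unitality are then verified by postcomposing with each projection $L \to X_d$ and appealing to uniqueness of maps into a limit, which reduces the diagrams to those already known to commute for the $X_d$. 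For filtered colimits, the essential observation is that $\Cal{P}\circdot_{\Sigma}(-)$ preserves filtered colimits in $\SymSeq$: it is built out of coproducts, iterates of $\circhat$, and $\Sigma$-coinvariants, and the monoidal product $\otimes$ commutes with filtered colimits in each variable because $(\mathsf{C}, \otimes, \mathbf{1})$ is closed. Consequently $\Cal{P}\circdot_{\Sigma}(\colim_d \Cal{U}(X_d)) \cong \colim_d \Cal{P}\circdot_{\Sigma}(X_d)$, and the structure maps $\mu_{X_d}$ colimit to a $\Cal{P}$-algebra structure on $\colim_d \Cal{U}(X_d)$ satisfying the required axioms.

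For general colimits I would establish the displayed reflexive coequalizer formula. The two parallel arrows $\Cal{P}\circdot_{\Sigma}\!\bigl(\colim_d \Cal{P}\circdot_{\Sigma}(X_d)\bigr) \rightrightarrows \Cal{P}\circdot_{\Sigma}\!\bigl(\colim_d X_d\bigr)$ are:
\begin{enumerate}[label=(\alph*)]
    \item $\Cal{P}\circdot_{\Sigma}(-)$ applied to the natural map $\colim_d \Cal{P}\circdot_{\Sigma}(X_d) \to \colim_d X_d$ induced termwise by the action maps $\mu_{X_d}$;
    \item $\Cal{P}\circdot_{\Sigma}(-)$ applied to the canonical comparison $\colim_d \Cal{P}\circdot_{\Sigma}(X_d) \to \Cal{P}\circdot_{\Sigma}(\colim_d X_d)$, postcomposed with the monad multiplication $\xi \circdot_{\Sigma} \id \colon \Cal{P}\circdot_{\Sigma}\Cal{P}\circdot_{\Sigma}(-) \to \Cal{P}\circdot_{\Sigma}(-)$.
\end{enumerate}
This pair is reflexive via a common section built from the unit $\varepsilon \colon \Cal{I}^{\Sigma} \to \Cal{P}$. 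The cocone property of $Y := \mathrm{coeq}$ then comes from the canonical reflexive coequalizer presentation of any $\Cal{P}$-algebra, $\Cal{P}\circdot_{\Sigma}\Cal{P}\circdot_{\Sigma}(X_d) \rightrightarrows \Cal{P}\circdot_{\Sigma}(X_d) \to X_d$, which is compatible with our two maps and therefore yields compatible $\Cal{P}$-algebra maps $X_d \to Y$. Conversely, given a cocone $\{X_d \to Z\}$ in $\Alg{P}$, the underlying cocone in $\SymSeq$ produces a symmetric sequence map $\colim_d X_d \to Z$, whence $\mu_Z \circ \Cal{P}\circdot_{\Sigma}(\colim_d X_d \to Z) \colon \Cal{P}\circdot_{\Sigma}(\colim_d X_d) \to Z$ is readily checked to coequalize (a) and (b) using the algebra axioms of $Z$, hence factors uniquely through $Y$.

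The main obstacle is justifying that the coequalizer defining $Y$ actually exists in $\Alg{P}$ and is computed in $\SymSeq$ at the level of underlying objects. For this one invokes the standard fact that reflexive coequalizers of $\Cal{P}$-algebras are created by $\Cal{U}$ whenever the monad $\Cal{P}\circdot_{\Sigma}(-)$ preserves them; in our situation this holds because reflexive coequalizers commute with coproducts, with each $\otimes$-factor (by closedness of $\mathsf{C}$), and with finite group coinvariants, all of which are the ingredients of $\circdot_{\Sigma}$. With this preservation property in hand, the proof is a direct adaptation of the argument in \cite[3.29]{Har-1} and \cite[4.3]{Bor}, which we would import essentially verbatim.
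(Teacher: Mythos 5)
Your proposal is correct and is exactly the route the paper intends: the paper leaves the proof to the reader, citing the standard monadic arguments of \cite[3.29]{Har-1} and \cite[4.3]{Bor}, and your outline (limits created by $\Cal{U}$, filtered colimits and reflexive coequalizers preserved by $\Cal{P}\circdot_{\Sigma}(-)$ since $\otimes$ commutes with colimits in each variable and these colimits are sifted, then the standard reflexive-coequalizer presentation for general colimits) is precisely that argument fleshed out. No gaps worth flagging.
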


\subsection{Modules over $\Cal{P}$-algebras}

\begin{definition} \label{A-infty-alg} 
     Let $\Cal{P}$ be a symmetric $\Nl$-operad and $\Cal{W}$ be a $\Cal{P}$-algebra. Let $M$ be a symmetric sequence. We say that $M$ is an \textit{$\Cal{W}$-module} if there are maps of the form \[\eta_{\ell}\colon \Cal{P}_{\ell} \tensordot_{\Sigma}  \left( \Cal{W}^{\circhat (\ell-1)} \circhat M\right)\to M\] for $\ell\geq 1$ that satisfy associativity (\ref{assoc-A-infty-alg}) and unitality (\ref{unit-A-infty-alg}). If $M$ is concentrated at level $0$ we say that the object $M[0]$ is a \textit{$\Cal{W}$-algebra}. 
\end{definition}

    Set $\xi\colon \Cal{P}\circdot_{\Sigma} \Cal{P}\to \Cal{P}$ to be the multiplication on $\Cal{P}$ and $\mu\colon \Cal{P}\circdot_{\Sigma} (\Cal{W})\to \Cal{W}$ the action map on $\Cal{W}$. Let $\ell:=\ell_1+\dots+\ell_k$. Associativity and unitality amounts to the commutitivity of the following diagrams \begin{equation} \label{assoc-A-infty-alg}
        \xymatrix{
            \left(\Cal{P}_{k}\tensordot_{\Sigma}  (\Cal{P}_{\ell_1}\circhat \cdots \circhat \Cal{P}_{\ell_k})\right) \tensordot_{\Sigma}  \left( \Cal{W}^{\circhat (\ell-1)} \circhat M\right) \ar[r]^<(.2){\xi_{\ell}\otimes_{\Sigma}\id} \ar[d]^{\cong}
            &
            \Cal{P}_{\ell} \tensordot_{\Sigma}  \left( \Cal{W}^{\circhat (\ell-1)} \circhat M\right) \ar[dd]^{\eta_{\ell}}
            \\
            \Cal{P}_{k}\tensordot_{\Sigma}  \left( (\Cal{P}_{\ell_1}\tensordot_{\Sigma}  \Cal{W}^{\circhat \ell_1} ) \circhat \cdots\circhat (\Cal{P}_{\ell_k} \tensordot_{\Sigma}  ( \Cal{W}^{\circhat \ell_k-1}\circhat M ) ) \right) 
            \ar[d]^<(.3){\id\tensordot_{\Sigma} (\mu_{\ell_1} \circhat \cdots \circhat \mu_{\ell_{k-1}}\circhat \eta_{\ell_k})}
            &
            \\
            \Cal{P}_k \tensordot_{\Sigma}  \left( \underbrace{\Cal{W}\circhat \cdots \circhat \Cal{W}}_{k-1} \circhat M \right) \ar[r]^{\eta_k}
            &
        M,
        }
    \end{equation} and
    \begin{equation} \label{unit-A-infty-alg}
        \xymatrix{
            \Cal{P}_2 \tensordot_{\Sigma}  \left( (\Cal{P}_0\tensordot_{\Sigma}  \Cal{W}^{\circhat 0}) \circhat (\Cal{P}_1\tensordot_{\Sigma}  M) \right) \ar[rr]^<(.3){\id\tensordot_{\Sigma} (\mu_0 \circhat \eta_1)} \ar[d]^{\cong}
            &&
            \Cal{P}_2\tensordot_{\Sigma}  (\Cal{W} \circhat M) \ar[dd]^{\eta_2}
            \\
            \left( \Cal{P}_2 \tensordot_{\Sigma}  (\Cal{P}_0\circhat \Cal{P}_1) \right) \tensordot_{\Sigma}  M \ar[d]^{\xi_1\tensordot_{\Sigma} \id}
            &&
            \\
            \Cal{P}_1 \tensordot_{\Sigma}  M \ar[rr]^{\eta_1}
            &&
            M.}
    \end{equation} Recall that $\mu_0\colon I\cong \Cal{P}_0\tensordot_{\Sigma}  \Cal{W}^{\circhat 0}\to \Cal{W}$ is the unit map for $\Cal{W}$.

\begin{remark}
    We encourage the reader to compare the above definition with that of modules over algebras over an operad, e.g.,  as in May \cite[Definition 3]{May}. In \cite[1.5.1]{BM-trees} an example of a $2$-colored operad whose algebras are pairs $(A,M)$ of an $\Cal{O}$-algebra $A$ along with an $A$-module $M$ is provided. The pair $(\Cal{W}, M)$ can be described analogously as an algebra over an $\mathbf{N}_+:=\{\pt, 0,1,2,\dots\}$-colored operad with levels, though we will not require such description.  
\end{remark}

\begin{definition} \label{def:equiv-of-Nl-ops} We say a map $\Cal{P}\to\Cal{Q}$ of (symmetric) $\Nl$-operads in some symmetric monoidal model category $\mathsf{C}$ is an \textit{equivalence} if for any $\underline{p}\in\Ncirc{k}[t]$ the induced map $\Cal{P}_k(\underline{p};t)\to\Cal{Q}_k(\underline{p};t)$ is a weak equivalence in $\mathsf{C}$. We write $\Cal{P}\simeq \Cal{Q}$ if there is a zig-zag of equivalences of (symmetric) $\Nl$-operads connecting $\Cal{P}$ and $\Cal{Q}$. 

In the special case that $\Cal{P}\simeq \Oper$ then we say that a $\Cal{P}$-algebra $\Cal{W}$ is an \textit{$A_{\infty}$-operad} and that modules over $\Cal{W}$ are \textit{$A_{\infty}$-algebras}.
\end{definition}

\section{Examples of symmetric $\Nl$-operads}
In this section we describe some examples of symmetric $\Nl$-operads of interest, specifically the coendomorphism $\Nl$-operads on a given cosimplicial symmetric sequence. We begin by describing $\Oper$ -- the symmetric $\Nl$-operad whose algebras are (one-color) operads as some of its properties will be essential in what is to come. Our eventual goal is to prove that the coendomorphism $\Nl$-operad on a $\Sigma$-free symmetric sequence $\Cal{X}$ (see Remark \ref{def:Sigma-copowered}) is indeed a symmetric $\Nl$-operad; with the particular example of $\Cal{A}= \mathsf{coEnd}(\SD_+)$ in mind (see Section \ref{sec:A}). 

Though we write most of this section for a general closed cocomplete symmetric monoidal category $\mathsf{C}$, we invite the reader to think particularly of the cases when $\mathsf{C}=\Spt$ or $\Top$.

\subsection{The symmetric $\Nl$-operad $\Oper$} \label{sec:oper-def}
We begin by describing $\Oper$ for the category $\mathsf{Set}$ of sets.

\begin{definition} \label{def-of-Oper} 
    Let $\Sigma$ denote the symmetric sequence in $(\mathsf{Set}, \times, \pt)$ with $\Sigma[n]=\Sigma_n$ and define a reduced $\Nl$-object as follows. For $\underline{p}\in \Ncirc{k}[t]$ we set \[ \Oper_{\ell}( \underline{p};t) := \hom\left( \Sigma[t] , \Sigma^{\boxcirc \ell}[\underline{p}] \right)^{\Sigma_t}\]
\end{definition}

\begin{remark}
    Note there are isomorphisms \begin{equation} \label{eq:oper-iso} \Oper_{\ell} (\underline{p};t)=\hom \left(\Sigma[t], \Sigma^{\circ \ell}[\underline{p}] \right)^{\Sigma_t} \cong \hom \left(\pt, \Sigma^{\circ \ell} [\underline{p}] \right) \cong \Sigma^{\circ \ell} [\underline{p}].\end{equation}
    Computing some small examples of $\Oper$, we note that 
    \begin{align*} 
    &\Oper_0(\emptyset;1) \cong * && \Oper_1(\emptyset;n) \cong \emptyset \quad (n \neq 1)
    \\
    &\Oper_1(n;n) \cong \Sigma_n \quad (n \geq 0) &&  \Oper_1(n;m)\cong \emptyset \quad (n\neq m \geq 0)
    \\
    &\Oper_2\left(n, (k_1,\dots,k_n);k\right)  \cong \Sigma_n \times_{\Sigma_{p_1}\times \cdots \times \Sigma_{p_n}} \Sigma_{k}  
\end{align*}
where $p_1,\dots,p_m$ denotes the multiplicities of distinct integers among $k_1,\dots,k_n$, $k=\sum_{i=1}^n k_i$, and $\Sigma_{p_1}\times \cdots \times \Sigma_{p_m}$ acts on $\Sigma_k$, e.g.,  by permutation of block matrices \[\Sigma_{k_1} \times \cdots \times \Sigma_{k_n} \leq \Sigma_k.\] Similarly, let $q_1,\dots,q_r$ denotes the multiplicities of the distinct integers among $t_1,\dots,t_k$ and set \[\underline{p} = (n, (k_1,\dots,k_n), (t_1,\dots,t_k)) \in \Ncirc{3}[t].\]  Then \begin{align*}
    \Oper_3(\underline{p};t) & \cong  \Sigma_n \times_{\Sigma_{p_1}\times \cdots \times \Sigma_{p_m}} \Sigma_k \times_{\Sigma_{q_1}\times \cdots \times \Sigma_{q_r}} \Sigma_t 
    \end{align*} 
\end{remark}

\begin{proposition} \label{prop:Oper-is-Nl}
    $\Oper$ is a symmetric $\Nl$-operad.
\end{proposition}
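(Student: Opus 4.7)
The plan is to identify $\Oper_\ell(\underline{p};t)$ with the $\underline{p}$-indexed summand $\Sigma^{\circ \ell}[\underline{p}]$ of the $\ell$-fold composition product of $\Sigma \in \SymSeq_{\mathsf{Set}}$ (where $\Sigma[n] = \Sigma_n$) via (\ref{eq:oper-iso}), and then transport all the required $\Nl$-operad structure from the natural operad structure that $\Sigma$ itself carries. First I note reducedness is immediate: $\Oper_0(\emptyset;1) \cong I[1] \cong *$ and $\Oper_0(\emptyset;n) = \emptyset$ for $n \neq 1$ since $\Sigma^{\circ 0} = I$, and for $\ell \geq 1$ the summand $\Sigma^{\circ \ell}[\underline{p}]$ appearing in (\ref{comp-prod-def-orb}) is empty unless the weight of $\underline{p}$ equals $t$.

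For the $(\Isym, \Isym)$-bimodule structure, the right $\Isym$-action on $\Oper_\ell(\underline{p};t)$ arises from the right $\Sigma_t$-action on $\Sigma^{\circ \ell}[\underline{p}]$ at the innermost (weight) level, and the left $\Isym$-action arises from the left $\Sigma_{n^1}$-action on the outermost $\Sigma$-factor of $\Sigma^{\circ \ell}$. Compatibility of the left and right actions is immediate from the associativity of $\circ$ in $\SymSeq_{\mathsf{Set}}$, since the two actions live on disjoint (outermost versus innermost) factors of the iterated composition product.

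For the multiplication $\xi\colon \Oper \circdot_{\Sigma} \Oper \to \Oper$, the key input is that $\Sigma$ itself is a monoid in $(\SymSeq_{\mathsf{Set}}, \circ, I)$ via the block-permutation map $\Sigma \circ \Sigma \to \Sigma$ sending $\sigma \in \Sigma_n$ together with $\tau_i \in \Sigma_{k_i}$ to the corresponding element of $\Sigma_{k_1+\cdots+k_n}$, with unit $I \to \Sigma$ picking out $\mathrm{id} \in \Sigma_1$. Iterating this multiplication and combining with the associativity isomorphisms for $\circ$, I will construct, for each profile decomposition $\underline{p} = \underline{p}' \circ \underline{Q}$ as in Definition~\ref{circ-dot-composite}, a natural map
\[
    \Sigma^{\circ k}[\underline{p}'] \otimes \bigotimes_{j,i} \Sigma^{\circ \ell_j}[\underline{q}^j_i] \longrightarrow \Sigma^{\circ(\ell_1+\cdots+\ell_k)}[\underline{p}' \circ \underline{Q}].
\]
Assembling these over all decompositions via the formula (\ref{p=p'Q}) yields $\xi$, while the unit $\varepsilon\colon \Isym \to \Oper$ is the identity $\Isym_1(n;n) = \Sigma_n = \Oper_1(n;n)$. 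Associativity and unitality of $\xi$ then reduce to the operad axioms for $\Sigma$ combined with the monoidal coherence of $(\SymSeq_{\mathsf{Set}}, \circ, I)$, and $\Isym$-equivariance, required to descend $\xi$ from $\circdot$ to $\circdot_{\Sigma}$, holds by construction.

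The main obstacle is the combinatorial bookkeeping: matching profile decompositions with the associativity isomorphisms for $\circ$, and verifying $\Isym$-equivariance at the level of representatives so that the constructed maps descend to $\circdot_{\Sigma}$. Once this data is organized carefully, each axiom verification is a routine unwinding of the explicit formulas (\ref{comp-prod-def-orb}) and (\ref{p=p'Q}).
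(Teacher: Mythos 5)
There are two genuine problems with your plan. First, the $(\Isym,\Isym)$-bimodule structure is misidentified. Since $\Isym$ is concentrated in level $1$ with $\Isym_1(n;n)=\Sigma_n$, unwinding Definition \ref{ccp} shows that $(\Isym\circdot\Oper)_\ell(\underline{p};t)\cong\Sigma_t\times\Oper_\ell(\underline{p};t)$ while $(\Oper\circdot\Isym)_\ell(\underline{p};t)\cong\Oper_\ell(\underline{p};t)\times\prod_{j,i}\Sigma_{n^j_i}$: the \emph{left} action must be an action of $\Sigma_t$ (the weight), and the \emph{right} action must be an action of the product of symmetric groups over \emph{all} entries of the profile, not just the outermost $\Sigma_{n^1}$. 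You have the two actions reversed and the right action truncated to a single factor, so neither the ``immediate'' compatibility of the two actions nor the claim that $\xi$ descends to $\circdot_{\Sigma}$ ``by construction'' is actually established; both depend on correctly identifying the right action with the $H(k_1,\dots,k_n)$-type equivariance built into (\ref{comp-prod-def-orb}).

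Second, the block-permutation multiplication $\Sigma\circ\Sigma\to\Sigma$ is not the mechanism that produces $\xi$. That map collapses two composition-product levels into one, whereas $\xi_{k,(\ell_1,\dots,\ell_k)}$ must land in $\Oper_{\ell}$ with $\ell=\ell_1+\cdots+\ell_k$ equal to the total number of levels already present among the $\underline{q}^j_i$; using the block sum anywhere would change this count. What actually produces the map $\Sigma^{\circ k}[\underline{p}']\otimes\bigotimes_{j,i}\Sigma^{\circ\ell_j}[\underline{q}^j_i]\to\Sigma^{\circ\ell}[\underline{p}'\circ\underline{Q}]$ is the canonical identification $(\Sigma^{\circ\ell_1}\circ\cdots\circ\Sigma^{\circ\ell_k})[\underline{p}]\cong\Sigma^{\circ\ell}[\underline{p}]$ furnished by strict associativity of $\circ$ in $\mathsf{Set}$, together with the observation that the apparently extra factor $\Sigma^{\circ k}[\underline{p}']$ is exactly absorbed by the coequalizer defining $\tensordot_{\Sigma}$ (one has $\Sigma^{\circ k}[\underline{p}']\times_{S(\underline{p}')}(\cdots)\cong\Sigma^{\circ\ell}[(n,(\underline{p}_1,\dots,\underline{p}_n))]$, which then includes into $\Sigma^{\circ\ell}[\underline{p}]$). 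This is the crux of the paper's argument --- it is why each $\xi_{k,(\ell_1,\dots,\ell_k)}$ evaluated at a profile is an isomorphism --- and your plan replaces it with an ingredient that cannot do the job. The reducedness check and the identification $\Oper_\ell(\underline{p};t)\cong\Sigma^{\circ\ell}[\underline{p}]$ are fine and agree with the paper.
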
 

\begin{proof} As we will see, $\Oper$ is particularly special as the structure maps \begin{equation} \label{eq:xi} 
    \xi_{k,(\ell_1,\dots,\ell_k)}\colon \Oper_{k} \tensordot_{\Sigma} (\Oper_{\ell_1} \circhat \cdots \circhat \Oper_{\ell_k}) \to \Oper_{\ell}
\end{equation} which comprise $\xi\colon \Oper \circdot_{\Sigma} \Oper \to \Oper$ consist of isomorphisms once evaluated at a profile $\underline{p}\in \Ncirc{\ell}[t]$. 

That $\Oper$ is symmetric follows from the first part of the proof of Proposition \ref{prop:coend-X}.  The unit map $\epsilon \colon \Isym \to \Oper$ is obtained via the identity morphisms \[ \Isym_1(n;n)\cong \Sigma_n \to \Sigma_n \cong \Oper_1(n;n) \] and the initial morphism elsewhere. Let us now produce the desired map (\ref{eq:xi}) at a profile $\underline{p}\in \Ncirc{\ell}[t]$. 

For the reader who finds the following constructions a bit opaque, we first provide the following intuition: for $\ell\geq 0$ let $\circ_{\ell} \colon \SymSeq^{\times \ell}\to \SymSeq$ be the functor $\circ_{\ell} (X_1,\dots,X_{\ell}) =X_1 \circ \cdots \circ X_{\ell}$. Since $\circ$ is strictly monoidal, there are \textit{isomorphisms} \begin{equation} \label{eq:Xi-fo} \Xi_{k,(\ell_1,\cdots,\ell_k)}\colon \circ_k (\circ_{\ell_1},\cdots, \circ_{\ell_k}) \xrightarrow{\cong} \circ_{\ell_1+\cdots+\ell_k}\end{equation} such that $\circ_{\bullet}$ is a \textit{nonsymmetric functor-operad} (see, e.g.,  McClure-Smith \cite[\textsection 4]{MS-box}, omitting the requirement of symmetric group actions). 
Moreover, the composition maps $\xi_{k,(\ell_1,\dots,\ell_k)}$ are precisely the morphisms which prescribe the equivariance of the isomorphism $\Xi_{k,(\ell_1,\dots,\ell_k)}$ once evaluated at a particular string of inputs, given that evaluation at a profile in $\Ncirc{\ell}$ is the same as evaluating a symmetric sequence \textit{from the left}. For instance, $\xi_{3,(2,1,3)}$ provides the isomorphisms (natural in $X_1,\dots,X_6$) \[ (X_1\circ X_2) \circ X_3 \circ (X_4\circ X_5 \circ X_6) \cong  X_1 \circ \cdots \circ X_6\] 
and moreover, given $\underline{p}\in\Ncirc{\ell}[t]$, the desired map $\xi_{k,(\ell_1,\dots,\ell_k)}[\underline{p}]$ may be thought of a precisely arising from the isomorphism \[ \left(\Sigma^{\circ \ell_1} \circ \cdots \circ \Sigma^{\circ \ell_k}\right)[\underline{p}]\xrightarrow{\cong}\Sigma^{\circ \ell} [\underline{p}].\]

We describe $\xi_{2,(1,2)}$ first and note the general case follows a similar argument. Let $\underline{p}\in \Ncirc{3}[t]$ and note that \[ 
    \Oper_2 \tensordot_{\Sigma}  (\Oper_1 \circhat \Oper_2) )[\underline{p}] \cong \coprod_{\underline{p}=(n,(\underline{p}_1 \amalg \cdots \amalg \underline{p}_n))} \Oper_2 \tensordot_{\Sigma}  (\Oper_1 \circhat \Oper_2) )[(n, (\underline{p}_1,\dots,\underline{p}_n))]. \] 
Fix $\underline{p}_i\in \Ncirc{2}[s_i]$ for $i=1,\dots,n$ such that $\underline{p}=(n, (\underline{p_1} \amalg \cdots \amalg \underline{p_n}))$ and set $\underline{p}'=(n,(s_1,\dots,s_n))\in \Ncirc{2}[t]$. We then observe
\begin{align} \label{al:comp}
    (\Oper_2 \tensordot_{\Sigma}  (\Oper_1  &\circhat \Oper_2  ) )[(n, (\underline{p}_1,\dots,\underline{p}_n))]
    \\
    &\cong \Sigma^{\circ 2}[\underline{p}'] \times_{S(\underline{p}')} \left( \Sigma[n] \times ( \Sigma^{\circ 2} [\underline{p}_1] \times \cdots \times \Sigma^{\circ 2}[\underline{p}_n] \right) \nonumber
    \\
    &\cong  \Sigma^{\circ 3} [(n, (\underline{p}_1,\cdots,\underline{p}_n))] \xrightarrow{\iota} \Sigma^{\circ 3}[\underline{p}] \cong \Oper_3(\underline{p};t)\nonumber
\end{align}
such that $S(\underline{p'})=\Sigma_n \times \prod_{i=1} \Sigma_{s_i}$ and $\iota$ is the natural inclusion obtained from the assumption $\underline{p}=(n, (\underline{p}_1 \amalg \cdots \amalg \underline{p}_n))$.

The desired map $\xi_{2,(1,2)}[\underline{p}]$ is induced by the coproduct of composites (\ref{al:comp}) for all $\underline{p}=(n,(\underline{p}_1,\dots,\underline{p}_n))$. Note further that \textit{as sets} there is an isomorphism \[ 
    \coprod_{\underline{p}=(n,(\underline{p}_1 \amalg \cdots \amalg \underline{p}_n))} \Sigma^{\circ 3} [(n,(\underline{p}_1,\cdots, \underline{p}_n))] \cong \Sigma^{\circ 3}[\underline{p}]
\] 
since $\circ$ is strictly monoidal in the category of symmetric sequences of sets. Thus, $\xi_{2,(1,2)}[\underline{p}]$ is invertible and more generally $\xi_{k,(\ell_1,\dots,\ell_k)}$ evaluated at any profile in $\Ncirc{\ell}$ is also invertible.

Associativity of $\xi$ then follows from the associativity $\Xi$ as in (\ref{eq:Xi-fo}). That is, for $(n,(k_1,\dots,k_n))\in \Ncirc{2}[k]$ and for $i=1,\dots,n$, $\underline{q}_i=(k_i,(\ell_{i,1},\cdots,\ell_{i,k_i})) \in \Ncirc{2}[t_i]$ 
the associativity relation \begin{align*}
     \xi_{k,(\ell_{1,1},\cdots, \ell_{n,k_n})}\,& ( \xi_{n,(k_1,\cdots,k_n)} \tensordot_{\Sigma} \id) \\
     &=\xi_{n,(t_1,\cdots,t_n)} \, \big(\id \tensordot_{\Sigma}  (\xi_{k_1,(\ell_{1,1},\cdots,\ell_{1,k_1})} \circhat \cdots \circhat \xi_{k_n,(\ell_{n,1},\cdots,\ell_{n,k_n})})\big)\end{align*}
evaluated at some $\underline{p}\in \Ncirc{\ell}$ follows from the commutative square of isomorphisms
\[ \xymatrix{
    \left(\left(\Sigma^{\circ \ell_{1,1}} {\circ} \cdots {\circ} \Sigma^{\circ \ell_{1,k_1}}\right){\circ} \cdots {\circ} \left(\Sigma^{\circ \ell_{n,1}} {\circ} \cdots {\circ} \Sigma^{\circ \ell_{n,k_n}}\right)\right) [\underline{p}] \ar[r] \ar[d] 
    & 
    \left(\Sigma^{\circ t_1} {\circ} \cdots {\circ} \Sigma^{\circ t_n}\right) [\underline{p}] \ar[d]
    \\
    \left( \Sigma^{\circ \ell_{1,1}} {\circ}  \cdots {\circ} \Sigma^{\circ \ell_{n,k_n}}\right)[\underline{p}] \ar[r]
    &
    \Sigma^{\circ \ell}[\underline{p}]
}\]

Similarly, the unitality condition is satisfied by the more obvious isomorphisms \[ \left( ( \underbrace{\Sigma \circ \cdots \circ \Sigma}_{n}) \right)[\underline{p}] \cong \Sigma^{\circ n}[\underline{p}] \cong \left( \underbrace{(\Sigma) \circ \cdots \circ (\Sigma)}_{n} \right)[\underline{p}]\] for all $n\geq 0$ and $\underline{p}\in \Ncirc{1}$ (i.e., $\underline{p}=p\geq 0$). 
\end{proof} 

\begin{remark}
    Let $(\mathsf{C},\otimes, \mathbf{1})$ be a closed symmetric monoidal category with finite coproducts. We write $\Oper^{\mathsf{C}}$ for the image of $\Oper$ in $\mathsf{C}$ under $\Sigma_n\mapsto \mathbf{\Sigma}[n]\cong \coprod_{\sigma\in \Sigma_n} \mathbf{1}$. That is, given a profile $\underline{p}\in \Ncirc{k}[t]$ we set \[ \Oper^{\mathsf{C}}(\underline{p};t)= \Map^{\mathsf{C}}\left( \mathbf{\Sigma}[t], \mathbf{\Sigma}^{\circ k} [\underline{p}] \right)^{\Sigma_t}.\]
\end{remark}

Before showing that $\Oper$ encodes (one-color) operads as its algebras we first demonstrate another class of symmetric $\Nl$-operads. 

\subsection{Coendomorphism symmetric $\Nl$-operads}
Recall as in Section 6 that $(\mathsf{C}, \otimes, \mathbf{1})$ denotes a closed cocomplete symmetric monoidal category and $\mathbf{\Sigma}$ is the symmetric sequence in $\mathsf{C}$ with $\mathbf{\Sigma}[k]=\coprod_{\sigma\in \Sigma_k} \mathbf{1}.$
\begin{definition} \label{def:coend}
    Let $\Cal{X}\in \SymSeq_{\mathsf{C}}^{\bdel}$ and set $\mathsf{coEnd}(\Cal{X})$ to be the reduced  $\Nl$-object given at $(\underline{p};t)\in \Ncirc{\ell}[t]$ by \[ \mathsf{coEnd}(\Cal{X})_{\ell}(\underline{p};t):= \Map_{\Dres} \left( \Cal{X}[t] , \Cal{X}^{\boxcirc \ell} [\underline{p}] \right)^{\Sigma_t}.\]
\end{definition}

\begin{example} Unravelling the above definition, $\mathsf{coEnd}(\Cal{X})_1(k;k)$ consists of all $\Sigma_k$-equivariant cosimplicial maps $\Cal{X}[k] \to \Cal{X}[k]$. Let $(\underline{q};k)=(n,(k_1,\dots,k_n);k)\in \Ncirc{2}[k]$ and recall the description of $H(k_1,\dots,k_n)\leq \Sigma_k$ from Definition \ref{def:H}. Then, $\mathsf{coEnd}(\Cal{X})_2(\underline{q};k)$ consists of all $\Sigma_k$-equivariant cosimplicial maps of the form \[ 
    \Cal{X}[k] \to (\Cal{X}\boxcirc \Cal{X}) [\underline{q}]\cong \mathbf{\Sigma}[k] \otimes_{H(k_1,\dots,k_n)} \Cal{X}[n] \square (\Cal{X}[k_1] \otimes\cdots \otimes \Cal{X}[k_n]).
\] 

Further, $\mathsf{coEnd}(\Cal{X})$ is \textit{quadratic} in that it is generated by its first two levels as follows: let $\underline{p}=(n, (k_i)_{i\in \mathbf{n}}, (t_j)_{j\in \mathbf{k}}$) and set $k:=\sum_{i=1}^n k_i$ and $t:=\sum_{j=1}^k t_j$. Then, $\mathsf{coEnd}(\Cal{X})_3(\underline{p};t)$ consists of cosimplicial maps $\psi$ that fit into the following $\Sigma_t$-equivariant diagram \begin{equation*}
\xymatrix{
    \Cal{X} [t] \ar[r]^{\psi_1} \ar[dr]_{\psi} & 
    (\Cal{X}\boxcirc\Cal{X})[k, (t_j)_{j\in\mathbf{k}}] \ar[d]^{\psi_2\boxcirc \textrm{id}} 
    \\ 
    & (\Cal{X}\boxcirc\Cal{X}\boxcirc \Cal{X}) [ n, (k_i)_{i\in\mathbf{n}}, (t_j)_{j\in\mathbf{k}}].
    }
\end{equation*}
such that $\psi_2\in \mathsf{coEnd}(\Cal{X})_2(n,(k_1,\dots,k_n);k)$, i.e., $\psi_2\colon \Cal{X}[k]\to (\Cal{X}\boxcirc \Cal{X})[n,(k_1,\dots,k_n)]$ is $\Sigma_k$-equivariant. Said differently, there is an isomorphism \[
    \mathsf{coEnd}(\mathcal{X})_3(\underline{p};t)\cong  \mathsf{coEnd}(\mathcal{X})_2(n, (k_i)_{i\in\mathbf{n}};k) \otimes_{\Sigma_k} \mathsf{coEnd}(\mathcal{X})_2(k, (t_j)_{j\in\mathbf{k}}; t)
\] where $\Sigma_k$ acts by shuffling the factors $t_1,\dots,t_k$ of $\Cal{X}_2(k, (t_j)_{j\in\mathbf{k}};t)$ in accordance to the $\Sigma_k$ equivariance of maps in $\Cal{X}_2(n, (k_i)_{i\in\mathbf{n}};k)$. In general, given a profile \[\underline{p}=(n^1,(n^2_i)_{i\in\mathbf{n^1}}, \dots, (n^{\ell}_i)_{i\in \mathbf{n^{\ell-1}}}) \in\Ncirc{\ell}\] the object $\mathsf{coEnd}(\mathcal{X})_{\ell} (\underline{p};n^{\ell})$ is isomorphic to  \begin{align} \label{eq:coEnd-quad}
     \mathsf{coEnd}(\mathcal{X})_2\big(n^1, (n^2_i)_{i\in \mathbf{n^1}};n^2\big) \otimes_{\Sigma_{n^2}}  \cdots   \otimes_{\Sigma_{n^{\ell-1}}} \mathsf{coEnd}(\mathcal{X})_2\big(n^{\ell-1}, (n^{\ell}_i)_{i\in\mathbf{n^{\ell-1}}};n^{\ell}\big).\end{align} \end{example}

\begin{remark} \label{def:Sigma-copowered}
    We would like to be able to say that $\coend{X}$ is a symmetric $\Nl$-operad for \textit{any} cosimplicial symmetric sequence $\Cal{X}$, however this seems to not be the case. The issue seems to be based on the potential non-invertibility of $\theta$ (as in \eqref{eq:theta}) and similarly how $\boxcirc$ fails to be a \textit{strictly} monoidal product for cosimplicial symmetric sequences. However, there is a class of cosimplicial symmetric sequences on which we get the desired symmetric $\Nl$-structure on $\coend{\Cal{X}}$. 
    
    Let us say that $\Cal{X}$ is \textit{$\Sigma$-free} if there is a sequence $\{\Cal{Y}[n]\}_{n\geq 0}$ of cosimplicial objects in $\mathsf{C}$ with \[ \Cal{X}[n] = \Sigma_n {\cdot} \Cal{Y}[n] \cong \mathbf{\Sigma}[n]\otimes \Cal{Y}[n]\]
    and such that the $\Sigma_n$ action on $\Cal{X}[n]$ is trivial on $\Cal{Y}[n]$ for all $n$. In such case we write $\Cal{X}= \Sigma {\cdot} \Cal{Y}$. The benefit for us is that if $\Cal{X}$ is $\Sigma$-free, then $\theta$ has an inverse (which is constructed in the following proposition), and so $\boxcirc$ \textit{is} a monoidal product when restricted to $\Sigma$-free cosimplicial symmetric sequences.
\end{remark}

\begin{proposition} \label{prop:coend-X}
    If $\Cal{X}\in \SymSeq_{\mathsf{C}}^{\bdel}$ is $\Sigma$-free, then $\mathsf{coEnd}(\Cal{X})$ is a symmetric $\Nl$-operad.
\end{proposition}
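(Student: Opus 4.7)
The strategy is to parallel the proof of Proposition \ref{prop:Oper-is-Nl}, with the key technical input being that $\boxcirc$ restricts to a strict monoidal product on the subcategory of $\Sigma$-free cosimplicial symmetric sequences. Concretely, I would first check that if $\Cal{X}=\Sigma{\cdot}\Cal{Y}$ is $\Sigma$-free, then so are all iterated $\boxcirc$-powers $\Cal{X}^{\boxcirc n}$, and the comparison morphism $\theta$ of \eqref{eq:theta} becomes invertible whenever all three arguments are iterated $\boxcirc$-powers of $\Cal{X}$. The obstruction to $\theta$ being an isomorphism comes from the failure of $\circ$ to commute with colimits on the right (see \eqref{eq:colim-circ}), and $\Sigma$-freeness forces the coend formula \eqref{comp-prod-def-orb} to collapse to a plain coproduct indexed by symmetric groups, making the relevant colimit commutations hold on the nose. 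Consequently iterated $\boxcirc$-products of $\Sigma$-free objects are strictly associative and unital, which is exactly what is needed to mimic the $\Oper$ argument.

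Second, I would construct the structure maps. The left and right $\Isym$-actions on $\coend{X}$ come from post- and pre-composition with the symmetries $\sigma{\cdot}\id_{\Cal{X}[n]}$; the unit $\varepsilon\colon \Isym\to\coend{X}$ sends $\sigma\in\mathbf{\Sigma}[n]$ to this same self-map at level one, and is the initial map on all other components. For the multiplication $\xi\colon \coend{X}\circdot_{\Sigma}\coend{X}\to\coend{X}$, I would use the summand decomposition \eqref{p=p'Q}: given a factorization $\underline{p}=\underline{p}'\circ\underline{Q}$ with $\underline{p}'\in\Ncirc{k}[t]$ and $\underline{q}^j_i\in\Ncirc{\ell_j}[n^j_i]$, together with elements $\varphi\colon \Cal{X}[t]\to\Cal{X}^{\boxcirc k}[\underline{p}']$ and $\psi^j_i\colon \Cal{X}[n^j_i]\to\Cal{X}^{\boxcirc \ell_j}[\underline{q}^j_i]$, I define $\xi$ on this summand to be the composite obtained by first applying $\varphi$ and then ``plugging in'' the $\psi^j_i$'s via the functoriality of $\boxcirc$, followed by the strict identification $(\Cal{X}^{\boxcirc \ell_1}\boxcirc\cdots\boxcirc\Cal{X}^{\boxcirc \ell_k})[\underline{p}]\cong\Cal{X}^{\boxcirc \ell}[\underline{p}]$ afforded by the first paragraph. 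Coequalizer invariance with respect to the $\Isym$-relation defining $\circdot_{\Sigma}$ then follows from naturality of $\boxcirc$ together with the equivariance of $\varphi$ and the $\psi^j_i$.

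The main obstacle is verifying associativity of $\xi$ and its compatibility with $\varepsilon$. My plan is to reduce these diagrams to the strict associativity and unitality of $\boxcirc$ on $\Sigma$-free objects, in direct analogy with how $\Oper$ inherited its relations from the nonsymmetric functor-operad structure on $\{\circ_k\}$ used in the proof of Proposition \ref{prop:Oper-is-Nl}. The nontrivial point is to check that the various nested $\boxcirc$-reassociations required to compare $(\xi\circdot_{\Sigma}\id)\,\xi$ with $(\id\circdot_{\Sigma}\xi)\,\xi$ invoke only instances of $\theta$ whose inputs are iterated $\boxcirc$-powers of $\Cal{X}$, so that all such $\theta$'s are invertible by the first paragraph; this can be carried out by an induction on the total level $\ell$ using the quadratic description \eqref{eq:coEnd-quad}. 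With this coherence in place, the associativity and unitality squares commute profile-by-profile essentially tautologically, completing the verification that $\coend{X}$ is a symmetric $\Nl$-operad.
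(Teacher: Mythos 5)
Your outline follows the paper's proof: same decomposition into (i) the $\Isym$-bimodule structure, (ii) invertibility of $\theta$ on iterated $\boxcirc$-powers of $\Cal{X}$, (iii) a multiplication built from a map of the form $\Gamma$ plus composition in the mapping objects, and (iv) a reduction of associativity/unitality to the coherence of these identifications. However, the justification you give for the pivotal step (ii) is not the actual mechanism and would fail if carried out literally. You claim that $\Sigma$-freeness makes "the relevant colimit commutations hold on the nose" because the coend in \eqref{comp-prod-def-orb} collapses to a coproduct; but the failure of $A\circ(-)$ to preserve the staircase colimits defining $\boxcirc$ comes from the non-additivity of the tensor powers $Y^{\check{\otimes}n}$ (cross terms), not from the group actions, so freeness in the $\Sigma$-direction does not make the universal map \eqref{eq:colim-circ} invertible by any general colimit-commutation argument. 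What the paper does instead is construct an explicit one-sided inverse $\mu_{\ell_1,\dots,\ell_k}\colon \Cal{X}^{\boxcirc \ell_1}\boxcirc\cdots\boxcirc\Cal{X}^{\boxcirc \ell_k}\to\Cal{X}^{\boxcirc \ell}$ by writing $\Cal{X}=\Sigma{\cdot}\Cal{Y}$ and splitting $\Cal{X}^{\boxcirc k}[\underline{p}]\cong \mathbf{\Sigma}^{\circ k}[\underline{p}]\otimes \Cal{Y}^{\square k}[\underline{p}]$: the $\mathbf{\Sigma}$-factor is handled by the strict monoidality of $\circ$ for symmetric sequences of \emph{sets} (the same inclusion $\iota$ used in the proof of Proposition \ref{prop:Oper-is-Nl}), and the $\Cal{Y}$-factor by the ordinary box product $\square$ in $\mathsf{C}^{\bdel}$, which \emph{is} strictly monoidal. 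So the $\Sigma$-freeness hypothesis is used to split off a set-level piece where the reassociation can be done by hand, not to make the colimits commute.

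With that correction, the remainder of your plan is the paper's argument: the symmetric structure (which, as the paper notes, does not even need $\Sigma$-freeness), the unit $\varepsilon$ adjoint to the $\Sigma_n$-action map, the multiplication $\xi$ as the composite of $\id\tensordot_{\Sigma}\Gamma$, composition, and $(\mu_{\ell_1,\dots,\ell_k})_*$, and the coherence checks reducing to the $\Oper$-style identifications.
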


\begin{proof}
    This argument is rather long and somewhat tedious, so we break it up into several steps. The first step is to show that $\mathsf{coEnd}(\Cal{X})$ is symmetric, in fact $\Sigma$-freeness is not required for this part. 
    
    Let $\ell\geq 0$. The left action of $\Isym$ on $\coend{X}_{\ell}$ is obtained by $\Sigma_t$ action on the maps $\Cal{X}[t]\to \Cal{X}^{\boxcirc \ell}[\underline{p}]$ which comprise $\coend{X}_{\ell}$. The right action of $\Isym\circhat \cdots \circhat \Isym$ on $\coend{X}_{\ell}$ is obtained, e.g., , at $\ell=2$ as follows. For a profile $\underline{q}=(n,(k_1,\cdots,k_n))$, we observe \[(\Isym\circhat \Isym)(\underline{q};k)\cong \Sigma_n\ltimes (\Sigma_{k_1} \times \cdots \times \Sigma_{k_n}) \leq \Sigma_k\] acts via the $\Sigma_k$-equivariance of \[ 
        \Cal{X}[k] \to \mathbf{\Sigma}[k]\otimes_{H(k_1,\dots,k_n)} \Cal{X}[n] \square (\Cal{X}[k_1] \otimes\cdots \otimes \Cal{X}[k_n]).
    \] 
    The general case follows a similar argument. 
    
    Second, we produce a multiplication map \[\xi\colon \coend{X}\circdot_{\Sigma} \coend{X}\to \coend{X}.\] Two ingredients are crucial to this step. First, is the existence of maps \begin{equation} \mu_{\ell_1,\dots,\ell_k} \colon \Cal{X}^{\boxcirc \ell_1} \boxcirc \cdots \boxcirc \Cal{X}^{\boxcirc \ell_k} \to \Cal{X}^{\boxcirc \ell}\end{equation} 
    for each tuple $\ell_1,\dots,\ell_k$ such that $\ell_1+\cdots + \ell_k=\ell$ which are inverse to the induced map by $\theta$ (see (\ref{eq:theta})). It is this step for which $\Sigma$-freeness of $\Cal{X}$ seems essential and such maps $\mu$ are granted by utilizing the structure of $\Oper$. Write $\Cal{X}=\Sigma {\cdot} \Cal{Y}$ and for $\underline{p}=(n^1, \cdots, (n^k_i)_{i\in \mathbf{n^{k-1}}})$ set \[ \Cal{Y}^{\square k}[\underline{p}]=\Cal{Y}[n^1] \square \left(\bigotimes_{i\in \mathbf{n^1}} \Cal{Y}[n^2_i] \right) \square \cdots \square \left( \bigotimes_{i\in \mathbf{n^{k-1}}} \Cal{Y}[n^k_i] \right).\]
    Note in the above, we are utilizing the box product for $\mathsf{C}^{\bdel}$ which is strictly monoidal. 
    
    For simplicity we describe the map $\mu_{1,2}\colon \Cal{X} \boxcirc (\Cal{X} \boxcirc \Cal{X})\to \Cal{X}^{\boxcirc 3}$ and note the general case follows from a similar argument . Note that $\Cal{X} \boxcirc (\Cal{X} \boxcirc \Cal{X})$ takes as inputs profiles of the form $(n,(\underline{p_1},\cdots, \underline{p_n}))$ for some unordered list of profiles $\underline{p_i}\in \Ncirc{2}$. 
    
    Fix a specific profile $(n,(\underline{p_1}\amalg \cdots \amalg \underline{p_n}))=\underline{p}$ and for $i=1,\dots,n$, write $\underline{p_i}=(k_i, (t_{i,1},\cdots, t_{i,k_i})) \in \Ncirc{2}[t_i]$. There is an inclusion induced as follows \begin{align} 
         \Cal{X} &\boxcirc (\Cal{X} \boxcirc \Cal{X}) ) [n, (\underline{p_1},\cdots,\underline{p_n})] 
         \\
        & \cong (\mathbf{\Sigma}[n] \otimes  \Cal{Y}[n]) \square \left( \left( \mathbf{\Sigma}^{\circ 2}[\underline{p_1}] \otimes \Cal{Y}^{\square 2}[\underline{p_1}]\right)  \otimes \cdots \otimes \left( \mathbf{\Sigma}^{\circ 2}[\underline{p_n}] \otimes \Cal{Y}^{\square 2}[\underline{p_n}]\right) \right) \nonumber
        \\
        & \cong \left(\mathbf{\Sigma}[n] \otimes_{\Sigma_n} \Big(\coprod_{\dagger}\mathbf{\Sigma}[t] \otimes_{\Sigma_{t_1} \times \cdots \times \Sigma_{t_n}} \mathbf{\Sigma}^{\circ 2}[\underline{p_1}] \times \cdots \times \mathbf{\Sigma}^{\circ 2}[\underline{p_n}] \Big) \right) \otimes \Cal{Y}^{\square 3}[\underline{p}] \nonumber 
        \\ 
        & \cong \mathbf{\Sigma}^{\circ 3} [n, (\underline{p_1},\cdots, \underline{p_n})] \otimes \Cal{Y}^{\square 3}[\underline{p}] \xrightarrow{(*)} \mathbf{\Sigma}^{\circ 3}[\underline{p}] \otimes \Cal{Y}^{\square 3}[\underline{p}]  \cong \Cal{X}^{\boxcirc 3} [\underline{p}] \nonumber
    \end{align}
    where $\dagger$ runs over all $\Sigma_n$ permutations of $t_1,\cdots,t_n$ and $(*)$ is induced by the natural inclusion $\iota\colon \Sigma^{\circ 3} [n, (\underline{p_1},\cdots, \underline{p_n})] \to \Sigma^{\circ 3} [\underline{p}]$. Moreover, the map $\mu_{1,2}$ at profile $\underline{p}$ is then induced from the inclusion described above via the isomorphism\[
        \Big(\Cal{X}\boxcirc (\Cal{X} \boxcirc \Cal{X}) \Big)[\underline{p}] \cong \coprod_{(n,(\underline{p_1}\amalg \cdots \amalg \underline{p_n}))=\underline{p}} \Big(\Cal{X}\boxcirc (\Cal{X} \boxcirc \Cal{X}) \Big)\big [n,(\underline{p_1},\dots,\underline{p_n})\big].
    \]
    A straightforward computation then shows that $\mu_{1,2}$ is inverse to $\theta$.
    
    The second ingredient to producing $\xi$ is a map \begin{equation} \label{eq:def-Gamma}
        \coend{X}_{\ell_1} \circhat \cdots \circhat \, \coend{X}_{\ell_k} \xrightarrow{\;\;\Gamma\;\;} \Map_{\Dres} \left(\Cal{X}^{\boxcirc k}, \Cal{X}^{\boxcirc \ell_1} \boxcirc \cdots \boxcirc \Cal{X}^{\boxcirc \ell_k} \right)^{\Sigma}
    \end{equation}
    which we construct as follows. Let $\alpha_i\colon \Cal{X}\to \Cal{X}^{\boxcirc \ell_i}$ for $i=1,\dots, k$. The map $\Gamma$ is induced by the assignment $(\alpha_1,\dots,\alpha_k)\mapsto \alpha_1 \boxcirc \cdots \boxcirc \alpha_k$,  where, e.g.,  if $k=2$ and $\underline{p}=(n,(t_1,\cdots,t_n))\in \Ncirc{2}[t]$ then \[(\alpha_1\boxcirc \alpha_2)[\underline{p}] \colon \Cal{X}^{\boxcirc 2}[\underline{p}] \to (\Cal{X}^{\ell_1} \boxcirc \Cal{X}^{\ell_2})[\underline{p}]\] is obtained levelwise by the maps $\alpha_1[n] \colon \Cal{X}[n]\to \Cal{X}^{\boxcirc \ell_1}[n]$ and $\alpha_2[t_i]\colon \Cal{X}^{\boxcirc \ell_2}[t_i]$ for $i=1,\dots,n$.

   With these two ingredients in place, the composition $\xi$ is obtained via the composition \begin{align*} 
            \Map_{\Dres}&\left(\Cal{X},\Cal{X}^{\boxcirc k}\right)^{\Sigma} \tensordot_{\Sigma}\left( \Map_{\Dres}\left(\Cal{X},\Cal{X}^{\boxcirc \ell_1}\right)^{\Sigma}  \circhat \cdots \circhat \Map_{\Dres}\left(\Cal{X},\Cal{X}^{\boxcirc \ell_k}\right)^{\Sigma} \right)
            \\
            &\xrightarrow{\id \tensordot_{\Sigma}  \Gamma}  
            \Map_{\Dres} \left(\Cal{X},\Cal{X}^{\boxcirc k}\right)^{\Sigma} \tensordot_{\Sigma}  \Map_{\Dres} \left(\Cal{X}^{\boxcirc k},  \Cal{X}^{\boxcirc \ell_1} \boxcirc \cdots \boxcirc \Cal{X}^{\boxcirc \ell_k} \right)^{\Sigma} 
            \\
            &\xrightarrow{\text{comp.}} 
            \Map_{\Dres} \left(\Cal{X}, \Cal{X}^{\boxcirc \ell_1} \boxcirc \cdots \boxcirc \Cal{X}^{\boxcirc \ell_k}\right)^{\Sigma} \\
            &\xrightarrow{(\mu_{\ell_1,\dots,\ell_k})_*}
            \Map_{\Dres}\left(\Cal{X},\Cal{X}^{\boxcirc \ell}\right)^{\Sigma}.
        \end{align*}
    
    Fortunately, the unit map is simpler to describe. We obtain $\varepsilon\colon \Isym\to \coend{X}$ as the morphism \[\mathbf{\Sigma}[n]\to \Map_{\Dres}(\Cal{X}[n], \Cal{X}[n])^{\Sigma_n}\]  adjoint to the action map $\mathbf{\Sigma}[n] \otimes \Cal{X}[n] \to \Cal{X}[n]$ which expresses the $\Sigma_n$ equivariance of $\Cal{X}[n]$. 
    
    Showing that $\xi$ and $\varepsilon$ satisfy the appropriate associativity and unitality conditions is a tedious though ultimately straightforward and may be adapted from the (somewhat simpler) proof of Proposition \ref{main-1} found in Section \ref{sec:1a}. 
\end{proof}

\subsection{$\Oper$-algebras are operads}
Our aim is now to show that $\Oper$-algebras indeed model (one-color) operads. 

\begin{proposition} 
\label{Oper-is-operads}
There is an equivalence of categories between algebras over $\Oper^{\mathsf{C}}$ and operads in $\mathsf{C}$.
\end{proposition}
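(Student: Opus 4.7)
The plan is to exhibit a natural isomorphism $\Oper^{\mathsf{C}}_\ell \tensordot_\Sigma X^{\circhat \ell} \cong X^{\circ \ell}$ for each symmetric sequence $X$ and each $\ell \geq 0$, and then use it to transport algebra structures back and forth. This identification is the heart of the matter: unpacking $\Oper^{\mathsf{C}}_2(\underline{q};k) \cong \mathbf{\Sigma}^{\circ 2}[\underline{q}]$ from the proof of Proposition \ref{prop:Oper-is-Nl} and comparing with the formula (\ref{comp-prod-def-orb}) for the composition product shows that both sides, evaluated at level $k$, are
\[\coprod_{n \geq 0}\coprod_{(k_1,\dots,k_n)_{\Sigma_n}} \mathbf{\Sigma}[k] \otimes_{H(k_1,\dots,k_n)} X[n] \otimes X[k_1] \otimes \cdots \otimes X[k_n],\]
so that the stabilizers $H(k_1,\dots,k_n)$ appearing in the composition product coincide exactly with the equivariance imposed by the right $\Isym$-action used in $\tensordot_\Sigma$. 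The higher cases $\ell \geq 3$ then follow by induction using the isomorphisms $\Xi_{k,(\ell_1,\dots,\ell_k)}$ from \eqref{eq:Xi-fo} together with associativity of $\circdot_\Sigma$ (Proposition \ref{N_l-monoidal}).

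Given these identifications I would construct a functor $\Phi\colon \Alg{\Oper^{\mathsf{C}}} \to \mathsf{Oper}(\mathsf{C})$ as follows. Let $X$ be an $\Oper^{\mathsf{C}}$-algebra with structure maps $\mu_\ell$. Define the operad multiplication $m\colon X\circ X \to X$ to be $\mu_2$ under the identification above, and the unit $I\to X$ to be $\mu_0$. Associativity of $m$ translates into the outer rectangle of the associativity diagram for $\mu_3$ applied to the two bracketings $\Oper^{\mathsf{C}}_2\tensordot_\Sigma(\Oper^{\mathsf{C}}_2\circhat \Oper^{\mathsf{C}}_1)$ and $\Oper^{\mathsf{C}}_2\tensordot_\Sigma(\Oper^{\mathsf{C}}_1\circhat \Oper^{\mathsf{C}}_2)$, both of which land in $\Oper^{\mathsf{C}}_3\tensordot_\Sigma X^{\circhat 3}$ and yield $\mu_3$; unitality of $m$ is similarly encoded by the $\Oper$-algebra unit axiom at level two.

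Conversely, a functor $\Psi$ in the opposite direction is defined on an operad $(X,m,u)$ by taking $\mu_\ell\colon \Oper^{\mathsf{C}}_\ell \tensordot_\Sigma X^{\circhat \ell}\cong X^{\circ \ell}\to X$ to be the $\ell$-fold iterated composition. The fact that this is independent of the chosen bracketing, and satisfies the associativity/unitality axioms for an $\Oper^{\mathsf{C}}$-algebra, is exactly the operad associativity/unitality for $(X,m,u)$, transported through the associativity isomorphisms of $\circ$. The composites $\Phi\circ\Psi$ and $\Psi\circ\Phi$ are then visibly the identity: $\Phi\circ\Psi=\id$ by construction, while $\Psi\circ\Phi=\id$ because any $\mu_\ell$ is reconstructed from $\mu_2$ and $\mu_0$ via the associativity relations, since $\Oper^{\mathsf{C}}$ is generated (as an $\Nl$-operad) by its arity $0$, $1$ and $2$ pieces under the composition $\xi$ (an observation already implicit in the computation \eqref{eq:coEnd-quad} and in the proof of Proposition \ref{prop:Oper-is-Nl}, where $\xi_{k,(\ell_1,\dots,\ell_k)}$ was shown to be an isomorphism).

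The main obstacle is entirely in the bookkeeping of Step~1 — matching the $H(k_1,\dots,k_n)$-coinvariants coming from the symmetric group action on the composition product with the coend defining $\tensordot_\Sigma$. Once that natural isomorphism is in place, the rest of the argument is a direct translation between two equivalent presentations of the same monoidal data; no further analysis of equivariance or coherence is required.
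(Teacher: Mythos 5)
Your proposal is correct and follows essentially the same route as the paper's proof: the key step in both is the identification (profile by profile) of $\Oper^{\mathsf{C}}_{\ell}\tensordot_{\Sigma}X^{\circhat \ell}$ with $X^{\circ \ell}$ via the $H(k_1,\dots,k_n)$-coinvariants of \eqref{comp-prod-def-orb}, after which the operad multiplication is read off from $\mu_2$, associativity follows from the algebra axiom together with the invertibility of $\xi_{k,(\ell_1,\dots,\ell_k)}$, and the converse is obtained by reversing the construction. You are merely more explicit than the paper about the inverse functor and the fact that $\mu_{\ell}$ for $\ell\geq 3$ is forced by $\mu_0,\mu_2$ and associativity, which is a welcome elaboration rather than a different argument.
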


\begin{proof} 
We show that a symmetric $\Oper$-algebra is necessarily an operad and note that the argument is readily reversed to show the converse statement. Suppose $\Cal{W}$ is a symmetric $\Oper$-algebra. Note, $\Oper_2\tensordot_{\Sigma}  \Cal{W}^{\circhat 2}\to \Cal{W}$ consists of maps \begin{equation} \label{Oper-alg-1}
    \Oper_2 (n, (k_1,\dots,k_n);k)\tensordot_{\Sigma} \left( \Cal{W}[n] \otimes \Cal{W}[k_1] \otimes \cdots \otimes \Cal{W}[k_n]\right) \to \Cal{W}[k]
\end{equation}
for each $\underline{p}=(n,(k_1,\dots,k_n))\in\Ncirc{2}$. Fix such a profile $\underline{p}$ and let $p_1,\dots,p_m$ be the multiplicities of the distinct factors $d_1,\dots,d_m$ among $k_1,\dots,k_n$. Coequalizing the actions of $\Isym$ identifies the symmetric group actions (resp. with $k_i$ replacing $n$) \[ \mathbf{\Sigma}[n] \otimes \Cal{W}[n]\to \Cal{W}[n]\] with the right action of $\Isym$ given in the proof of Proposition \ref{prop:coend-X}. Thus, (\ref{Oper-alg-1}) yields $\Sigma_k$-equivariant map of the form\begin{equation}
\label{Oper-alg-2}
    \mathbf{\Sigma}[k] \otimes_{H(k_1,\dots,k_n)} \Cal{W}[n]\otimes \Cal{W}[k_1]\otimes \cdots \otimes \Cal{W}[k_n]\to \Cal{W}[k]
\end{equation} which moreover obeys the correct equivariance, e.g.,  as  described in May \cite{May}. Said differently, (\ref{Oper-alg-2}) is the factor $(\Cal{W}\circ \Cal{W})[n,(k_1,\dots,k_n)]$ (as in Definition \ref{comp-prod-def-orb}) and the collection of all such maps then pieces together to form \[m\colon \Cal{W}\circ \Cal{W}\to \Cal{W}.\] 

Since $\Cal{W}\in \Alg{\Oper}$ there is a commutative diagram of the form
\[ \xymatrix{
    \left( \Oper_2 \tensordot_{\Sigma}  (\Oper_1 \circhat \Oper_2) \right) \tensordot_{\Sigma}  (\Cal{W}^{\circhat 3})
    \ar[r]^<(.1){\cong} \ar[dd]^{\xi_{2,(1,2)}\tensordot_{\Sigma}  \id}
    &
    \Oper_2 \tensordot_{\Sigma}  \left( ( \Oper_1\tensordot_{\Sigma}  (\Cal{W})) \circhat (\Oper_2 \tensordot_{\Sigma}  (\Cal{W}^{\circhat 2})) \right) \ar[d]^{\id \tensordot_{\Sigma} (\mu_1\circhat \mu_2)}
    \\
    & \Oper_2 \tensordot_{\Sigma}  (\Cal{W}\circhat \Cal{W}) \ar[d]^{\mu_2}
    \\
    \Oper_3 \tensordot_{\Sigma}  (\Cal{W}^{\circhat 3}) \ar[r]^{\mu_3} & \Cal{W}. }
\] 

The composite of the right side maps describes \[\Cal{W}\circ (\Cal{W}\circ \Cal{W})\xrightarrow{\, \id\circ m\,} \Cal{W}\circ \Cal{W} \xrightarrow{\; m\;} \Cal{W}\] and by construction the bottom map describes \[(\Cal{W}\circ \Cal{W})\circ \Cal{W} \xrightarrow{\, m\circ \id\,} \Cal{W}\circ \Cal{W} \xrightarrow{\; m\;} \Cal{W}.\] Associativity of $m$ follows as $\xi_{2,(1,2)}$ is an isomorphism. 

To produce the unit $u\colon I\to \Cal{W}$ we first recall that \[\mu_0\colon I\cong \Cal{P}_0\tensordot_{\Sigma}  (\Cal{W}^{\circhat 0}) \to \Cal{W}\] provides the unit map $u$ on $\Cal{W}$. There is then a commuting diagram \begin{equation*}
        \xymatrix{
            \Oper_2 \tensordot_{\Sigma}  \left( (\Oper_0\tensordot_{\Sigma}  (\Cal{W}^{\circhat 0})) \circhat (\Oper_1\tensordot_{\Sigma}  (\Cal{W})) \right) \ar[rr]^<(.2){\id\tensordot_{\Sigma} (\mu_0 \circhat \mu_1)} \ar[d]^{\cong}
            &&
            \Oper_2\tensordot_{\Sigma}  (\Cal{W} \circhat \Cal{W}) \ar[dd]^{\mu_2}
            \\
            \left( \Oper_2 \tensordot_{\Sigma}  (\Oper_0\circhat \Oper_1) \right) \tensordot_{\Sigma}  (\Cal{W}) \ar[d]^{\xi_1\tensordot_{\Sigma} \id}
            &&
            \\
            \Oper_1 \tensordot_{\Sigma} ( \Cal{W}) \ar[rr]^{\mu_1}
            &&
            \Cal{W}}
    \end{equation*} the composite of top and right arrows of which results in \[I\circ \Cal{W}\xrightarrow{\;u \circ \id\;} \Cal{W}\circ \Cal{W} \xrightarrow{\;m\;} \Cal{W}\] and the left and bottom arrows are all isomorphisms. Commutativity of the other unitality diagram follows a similar analysis.
\end{proof} 

\begin{corollary}
\label{oper-oper}
    Let $\Cal{W}$ be an operad, i.e., $\Oper$-algebra. Let $M\in \mathsf{C}$ and denote by $\bar{M}$ the symmetric sequence concentrated at level $0$ with $\bar{M}[0]=M$. Then, $M$ is an $\Cal{W}$-algebra (in the sense of Definition \ref{A-infty-alg}) if and only if $M$ is an $\Cal{W}$-algebra in the classic sense. 
\end{corollary}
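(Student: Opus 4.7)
The plan is to mirror the proof of Proposition \ref{Oper-is-operads}, using that $\bar{M}$ is concentrated at level $0$ to collapse the structure maps $\eta_{\ell}$ from Definition \ref{A-infty-alg} to their classical counterparts. First I would analyze which profiles $\underline{p}\in \Ncirc{\ell}$ contribute nontrivially to the action map $\eta_{\ell}\colon \Oper_{\ell}\tensordot_{\Sigma}(\Cal{W}^{\circhat (\ell-1)}\circhat \bar{M})\to \bar{M}$. Since $\bar{M}[k]=\ptC$ for $k\geq 1$, the factor $(\Cal{W}^{\circhat (\ell-1)}\circhat \bar{M})[\underline{p}]$ vanishes unless every bottom-level entry $n^{\ell}_i$ of $\underline{p}$ equals zero, which forces $\underline{p}$ to have weight zero.

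Second, from $\eta_2$ at profiles of the form $(n,(0,\ldots,0))\in\Ncirc{2}[0]$ I would extract the classical algebra action. A direct calculation (of the type done in Section \ref{sec:oper-def}) gives $\Oper_2(n,(0,\ldots,0);0)\cong \mathbf{1}$, and after coequalizing the left $\Isym$-action (which at this profile amounts to $\Sigma_n$) the map $\eta_2$ assembles into $\Sigma_n$-equivariant maps $\Cal{W}[n]\otimes_{\Sigma_n} M^{\otimes n}\to M$ where $M:=\bar{M}[0]$. Their coproduct over $n\geq 0$ is a morphism $\Cal{W}\circ M\to M$, which is the desired classical action. Associativity is obtained by specializing the module associativity diagram (\ref{assoc-A-infty-alg}) to $k=2$, $\ell_1=1$, $\ell_2=2$: the two legs translate, after the identification above, into $\Cal{W}\circ(\Cal{W}\circ M)\to \Cal{W}\circ M\to M$ and $(\Cal{W}\circ \Cal{W})\circ M\to \Cal{W}\circ M\to M$ respectively, and these agree because $\xi_{2,(1,2)}$ is an isomorphism (as observed in the proof of Proposition \ref{prop:Oper-is-Nl}). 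The unit axiom follows similarly from the unitality diagram (\ref{unit-A-infty-alg}), again invoking the invertibility of the relevant $\xi$.

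For the converse, starting from a classical $\Cal{W}$-algebra $M$ with action $\mu\colon \Cal{W}\circ M\to M$, the map $\eta_1$ is forced to be the identity by the unit axiom, and $\eta_2$ is defined by reversing the identifications above. For $\ell\geq 3$, I would define $\eta_{\ell}$ inductively by factoring through iterated applications of $\eta_2$ and the operad multiplication on $\Cal{W}$, exploiting the isomorphisms $\xi_{k,(\ell_1,\ldots,\ell_k)}\colon \Oper_k\tensordot_{\Sigma}(\Oper_{\ell_1}\circhat \cdots \circhat \Oper_{\ell_k})\xrightarrow{\cong}\Oper_{\ell}$ to give a well-defined, coherent extension. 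The associativity and unitality diagrams then reduce to the classical algebra axioms for $M$ together with operad associativity on $\Cal{W}$.

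The main obstacle I anticipate is bookkeeping: carefully tracking the interaction between the $\tensordot_{\Sigma}$-coequalizers, the residual symmetric group actions on weight-zero profiles, and the isomorphisms $\xi_{k,(\ell_1,\ldots,\ell_k)}$. However, this is conceptually no harder than the analogous analysis already carried out in Propositions \ref{prop:Oper-is-Nl} and \ref{Oper-is-operads}, and the rigidity afforded by the fact that $\xi$ is an isomorphism on $\Oper$ means that no genuinely new difficulties should arise.
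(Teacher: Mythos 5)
Your proposal follows the paper's proof essentially verbatim: you identify that only weight-zero profiles (those ending in a string of $0$'s) contribute, compute $\Oper_2(n,(0,\dots,0);0)\cong\mathbf{1}$ so that coequalizing yields the classical action $\Cal{W}[n]\otimes_{\Sigma_n}M^{\otimes n}\to M$, and deduce associativity and unitality from the diagrams of Definition \ref{A-infty-alg} together with the invertibility of $\xi$, exactly as the paper does by referring back to the proof of Proposition \ref{Oper-is-operads}. The extra detail you supply for the converse direction (inductively defining $\eta_{\ell}$ for $\ell\geq 3$ via the isomorphisms $\xi_{k,(\ell_1,\dots,\ell_k)}$) is consistent with the paper's remark that the argument is readily reversed.
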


\begin{proof}
    As in Definition \ref{A-infty-alg}, a $\Cal{W}$-algebra consists of maps \[\Oper_{\ell} \tensordot_{\Sigma}  (\Cal{W}^{\circhat (\ell-1)} \circhat \bar{M})\to \bar{M}.\] 
    
    Note, since $\bar{M}$ is concentrated at $0$, the only nontrivial contributors to such maps will have profiles which end in a string of $0$. In particular, for $\ell=2$ there are maps of the form \[\Oper_2(n,(0,\dots,0);0)\tensordot_{\Sigma}  \left( \Cal{W}[n] \otimes M^{\otimes n}\right)\to M.\] 
    
    Since $\Oper_2(n,(0,\dots,0);0)\cong \mathbf{\Sigma}[0]\cong \mathbf{1}$, the above maps descends to \[\Cal{W}[n]\otimes_{\Sigma_n} M^{\otimes n} \to M\] after coequalizing. Associativity and unitality follow a similar argument as the proof of Proposition \ref{Oper-is-operads}.
\end{proof}

\begin{remark}
 \label{trees-rmk}
    Though our description of $\Oper$ is new, descriptions of an $\mathbf{N}$-colored operad whose algebras are operads is not new. Berger-Moerdijk describe an $\mathbf{N}$-colored operad $\Cal{M}_{\textsf{Op}}$ in terms of trees whose algebras are operads in \cite[1.5.6]{BM-trees} (see also Dehling-Vallette \cite{Deh-Val}). Applying the forgetful functor $\mathbb{U}$ from Section \ref{N_l-is-N} to $\Oper$ yields an isomorphic $\mathbf{N}$-colored operad to that of Berger-Moerdijk, i.e., $\mathbb{U}\Oper\cong \Cal{M}_{\mathsf{Op}}$.
\end{remark}

\subsection{A model for $A_{\infty}$-operads} \label{sec:A}
We will now focus on a particular coendomorphism $\Nl$-operad in $\mathsf{Top}$, namely that on the cosimplicial symmetric sequence $\SD$ with $\SD[n]=\Sigma_n {\cdot} \Delta^{\bullet}$. 

\begin{proposition}\label{A-equiv-to-E} 
There is an equivalence of $\Nl$-operads $\mathsf{coEnd}(\SD) \to \Oper^{\mathsf{Top}}$. 
\end{proposition}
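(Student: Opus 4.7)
The plan is to construct the comparison map by restriction to cosimplicial level zero, then verify it is both a map of symmetric $\Nl$-operads and a levelwise weak equivalence.

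For the construction, note that for any cosimplicial symmetric sequence $\Cal{X}$ the level-zero functor satisfies $(\Cal{X}^{\boxcirc \ell})^0 \cong (\Cal{X}^0)^{\circ \ell}$, since the box product colimit at $n=0$ collapses to its initial vertex. Applied to $\SD$, whose cosimplicial level zero is exactly $\mathbf{\Sigma}$, this gives $(\SD^{\boxcirc \ell}[\underline{p}])^0 \cong \mathbf{\Sigma}^{\circ \ell}[\underline{p}]$. Restricting a $\Sigma_t$-equivariant cosimplicial map $\SD[t] \to \SD^{\boxcirc \ell}[\underline{p}]$ to level zero then yields a $\Sigma_t$-equivariant map $\mathbf{\Sigma}[t] \to \mathbf{\Sigma}^{\circ \ell}[\underline{p}]$, which assembles into the desired natural transformation $\mathsf{coEnd}(\SD)_\ell(\underline{p}; t) \to \Oper^{\mathsf{Top}}_\ell(\underline{p}; t)$.

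To show this is a levelwise weak equivalence at each profile $\underline{p} \in \Ncirc{\ell}[t]$, we exploit that $\SD = \Sigma \cdot \Delta^\bullet$ is $\Sigma$-free. The calculation from the proof of Proposition \ref{prop:coend-X} produces a natural isomorphism
$$\SD^{\boxcirc \ell}[\underline{p}] \cong \mathbf{\Sigma}^{\circ \ell}[\underline{p}] \otimes (\Delta^\bullet)^{\square_{\underline{p}}},$$
where $(\Delta^\bullet)^{\square_{\underline{p}}} = \Delta^\bullet \square (\Delta^\bullet)^{\times n^1} \square \cdots \square (\Delta^\bullet)^{\times n^{\ell-1}}$ is the nested box-and-cartesian product of $\Delta^\bullet$ prescribed by $\underline{p}$. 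Combining this with $\SD[t] = \mathbf{\Sigma}[t] \otimes \Delta^\bullet$, absorbing the $\Sigma_t$-equivariance, and noting that $\mathbf{\Sigma}^{\circ \ell}[\underline{p}]$ is discrete while $(\Delta^\bullet)^{\square_{\underline{p}}}$ is levelwise path-connected (being built from colimits of products of simplices), we obtain
$$\mathsf{coEnd}(\SD)_\ell(\underline{p}; t) \cong \mathbf{\Sigma}^{\circ \ell}[\underline{p}] \otimes \Map_{\Dres}(\Delta^\bullet, (\Delta^\bullet)^{\square_{\underline{p}}}).$$
The mapping space on the right is weakly contractible: the base identification $\Tot^{\mathsf{res}}(\Delta^\bullet) \simeq \pt$ extends via the McClure-Smith observation that $(\Delta^\bullet)^{\square k}$ is homeomorphic to $\Delta^\bullet$ (cf.\ Remark \ref{ex:loop-space-A-infty-monoid}), while the intermediate cartesian factors $(\Delta^\bullet)^{\times n^j}$ contribute only levelwise contractible pieces since each $\Delta^m$ is contractible. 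Consequently restriction to level zero collapses this contractible factor onto a point and realizes the weak equivalence onto $\mathbf{\Sigma}^{\circ \ell}[\underline{p}] \cong \Oper^{\mathsf{Top}}_\ell(\underline{p}; t)$.

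To check compatibility with the $\Nl$-operad structure, observe that both $\mathsf{coEnd}(\SD)$ and $\Oper^{\mathsf{Top}}$ are assembled from $\Sigma$-equivariant mapping spaces via the same combinatorial recipe on profiles (compare the construction of $\xi$ in Proposition \ref{prop:coend-X} with that of Proposition \ref{prop:Oper-is-Nl}). Since the level-zero functor $(-)^0$ is strictly monoidal from $(\SymSeq^{\bdel}, \boxcirc)$ to $(\SymSeq, \circ)$ and sends $\underline{I}$ to $I$, restriction to level zero intertwines both the composition and unit morphisms on the two sides, so the comparison is a map of symmetric $\Nl$-operads. The main obstacle is the careful extension of the McClure-Smith contractibility argument to the mixed iterated box-and-cartesian products $(\Delta^\bullet)^{\square_{\underline{p}}}$; this requires an inductive bookkeeping argument tracking how each intermediate cartesian factor $(\Delta^\bullet)^{\times n^j}$ interacts with the staircase colimits defining the box product at each cosimplicial level.
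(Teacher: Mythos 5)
Your construction of the comparison map (restriction to cosimplicial level zero, using $(\Cal{X}\boxcirc\Cal{Y})^0\cong\Cal{X}^0\circ\Cal{Y}^0$ and $(\SD)^0\cong\mathbf{\Sigma}$) is fine and agrees, after evaluation, with the map the paper builds from the retraction pair $\underline{\pt}\to\Delta^\bullet\to\underline{\pt}$; likewise the $\Sigma$-free decomposition $\SD^{\boxcirc\ell}[\underline{p}]\cong\mathbf{\Sigma}^{\circ\ell}[\underline{p}]\otimes(\Delta^\bullet)^{\square_{\underline{p}}}$ and the splitting-off of the discrete factor are correct. The problem is that everything then reduces to the weak contractibility of $\Map_{\Dres}(\Delta^\bullet,(\Delta^\bullet)^{\square_{\underline{p}}})$, equivalently the levelwise contractibility of the mixed box-and-cartesian products $(\Delta^\bullet)^{\square_{\underline{p}}}$, and this is exactly the step you do not prove: you invoke the McClure--Smith homeomorphism $(\Delta^\bullet)^{\square k}\cong\Delta^\bullet$ (which does not cover the intermediate factors $(\Delta^\bullet)^{\times n^j}$), assert that these factors ``contribute only levelwise contractible pieces,'' and then explicitly defer the required ``inductive bookkeeping argument.'' Contractibility of the vertices of the staircase diagram does not by itself give contractibility of its colimit, so as written the crux of the proposition is assumed rather than established.

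The paper closes precisely this gap by a homotopy-invariance argument that never identifies the homotopy type of $(\Delta^\bullet)^{\square_{\underline{p}}}$ directly: the maps $\pt\xrightarrow{\sim}\Delta^n\xrightarrow{\sim}\pt$ are weak equivalences between cofibrant--fibrant spaces, so they induce a levelwise weak equivalence $\SD^{\boxcirc k}[\underline{p}]\to(\Sigma\cdot\underline{\pt})^{\boxcirc k}[\underline{p}]$ (the staircase colimits defining $\boxcirc$ are taken along cofibrations, hence are homotopy colimits), and $\Map_{\Dres}(\SD[t],-)^{\Sigma_t}\cong\Tot^{\mathsf{res}}(-)$ sends levelwise weak equivalences of levelwise fibrant restricted cosimplicial spaces to weak equivalences. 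If you want to keep your direct approach, you must actually carry out the induction showing each level of $(\Delta^\bullet)^{\square_{\underline{p}}}$ is contractible (e.g.\ by showing each staircase colimit is a homotopy colimit of contractible spaces over a contractible indexing zig-zag); otherwise you should replace that step with the retraction/homotopy-invariance argument. Your closing remark about $(-)^0$ being strictly monoidal is a reasonable way to see compatibility with the $\Nl$-operad structures, and is if anything more explicit than the paper on that point, but it does not repair the missing contractibility.
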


\begin{proof} Note that equivalences of $\Nl$-operads are computed levelwise (Definition \ref{def:equiv-of-Nl-ops}) and that a morphism $f\colon X\to Y$ of cosimplicial objects in $\mathsf{Top}$ induces a map $(\Sigma {\cdot} X)^{\boxcirc k}\to (\Sigma {\cdot} Y)^{\boxcirc k}$ for $k\geq 1$. If additionally there is a retract $r\colon Y\to X$ of $f$ there is a map $\mathsf{coEnd}(\Sigma {\cdot} X)\to \mathsf{coEnd}(\Sigma{\cdot} Y)$ on coendomorphism operads induced by post-composition with $f$ and pre-composition with $r$.

Since there are morphisms $\pt \xrightarrow{\sim}\Delta^n\xrightarrow{\sim} \pt$ for all $n \geq 0$ (i.e., by inclusion at a vertex) we then have  \begin{align*}
    \Map_{\Dres}\left(\SD,(\SD)^{\boxcirc k}\right)^{\Sigma}
    \xrightarrow{(\dagger)}\Map_{\Dres}\left(\Sigma{\cdot}\underline{\pt}, (\Sigma{\cdot}\underline{\pt})^{\boxcirc k}\right)^{\Sigma}\cong \Map\left(\mathbf{\Sigma}, \mathbf{\Sigma}^{\circ k}\right)^{\Sigma}
\end{align*}
for all $k \geq 0$, where $\underline{\pt}$ denotes the constant cosimplicial object on $\pt\in \mathsf{Top}$. Moreover, since $\pt \to \Delta^n\to \pt$ consists of weak equivalences between fibrant and cofibrant objects for all $n$, the indicated map $(\dagger)$ is a weak equivalence in $\mathsf{Top}$.
\end{proof}

Note that for $\underline{p}\in \Ncirc{k}[t]$, $\Oper^{\mathsf{Top}}_{k}(\underline{p};t)$ is just the discrete space $\mathbf{\Sigma}^{\circ k}[\underline{p}]$. Similarly, $\Oper^{\Top}\cong \Oper^{\mathsf{Top}}_+$ will encode operads in $(\Top, \wedge, S^0)$ and thus also in $\Spt$ via the tensoring of $\Spt$ over $\Top$. 

\begin{remark} \label{rem:rho}Note the functor $(-)_+ \colon (\mathsf{Top}, \times, \pt) \to (\Top, \wedge, S^0)$ which adds a disjoint basepoint induces isomorphisms of pointed spaces \[ \Map_{\Dres}^{\Top}  \left( \SD_+, (\SD_+)^{\boxcirc k} \right)^{\Sigma}\cong \Map_{\Dres}^{\mathsf{Top}} \left( \SD, (\SD)^{\boxcirc k} \right)^{\Sigma}_+. \]
Thus, there is an isomorphism \[\mathsf{coEnd}(\SD_+)\cong \mathsf{coEnd}(\SD)_+\] of $\Nl$-operads in $\Top$. For ease of notation we write $\Cal{A}$ for this $\Nl$-operad and note that Proposition \ref{A-equiv-to-E} provides a map $\rho\colon \Cal{A}\xrightarrow{\sim} \Oper^{\Top}$, i.e., $\Cal{A}$ is a suitably ``fattened-up'' version of $\Oper$ which will encode $A_{\infty}$-operads as its algebras, similar to $\mathbb{A}$ encoding $A_{\infty}$-monoids in Example \ref{ex:loop-space-A-infty-monoid}. 
\end{remark}

\section{$A_{\infty}$-operad structure on the derivatives of $\Id_{\Alg{O}}$}
\label{proofs}

The aim of this final section is to prove Theorem \ref{main}. We begin by proving Proposition \ref{main-1} which as a corollary provides a proof of the Theorem \ref{main}(a). In Section \ref{O-and-dern-equiv} we prove Theorem \ref{main}(b).

\subsection{Proof of Theorem \ref{main}(a)} \label{sec:1a} Since $C(\Cal{O})$ is a $\boxcirc$-monoid (see Proposition \ref{def:box-circ-prod-map}), Theorem \ref{main}(a) will follow from Proposition \ref{main-1}, which we prove below. 

\begin{proof}[Proof of Proposition \ref{main-1}]

Let $\Cal{X}$ be a $\boxcirc$-monoid in $\SymSeq^{\bdel}_{\Spt}$ whose multiplication we denote by $m\colon \Cal{X}\boxcirc \Cal{X}\to\Cal{X}$. We aim to show that $\Tot \Cal{X}$ is an algebra over $\Cal{A}$. We define maps $\lambda_{\ell}$ as follows (note the notation $\dot{\wedge}$ as $\tensordot$ from Definition \ref{ccp} for the monoidal category $(\Spt,\wedge,S)$)\[
    \lambda_{\ell}\colon \Cal{A}_{\ell}\dot{\wedge}_{\Sigma} (\Tot \Cal{X})^{\circhat \ell} \to \Tot \Cal{X}    
\]

For simplicity we first describe the $\ell=2$ case. Let $\underline{p}=(n,(k_1,\dots,k_n))\in\Ncirc{2}[k]$. Let $\psi\in \mathcal{A}_2(\underline{p};t)$, and let $\alpha,\beta\colon \SD_+\to \Cal{X}$ be maps of cosimplicial symmetric sequences. Define $\gamma$ at level $k$ by the composite\begin{equation*} 
\xymatrix{
    (\SD_+)^{\boxcirc 2} [n, (k_1,\dots,k_n)] \ar[rr]^-{\alpha[n]\boxcirc \beta[k_1,\dots,k_n]} 
    && (\Cal{X}^{\boxcirc2})[n, (k_1,\dots,k_n)] \ar[d]^{m_*}\\
    (\SD_+)[k] \ar[u]^-{\psi[k]} \ar@{.>}[rr]^{\gamma[k]} && \Cal{X}[k] }
\end{equation*}
where $\alpha[n] \boxcirc \beta[k_1,\dots,k_n]$ is provided via the map $\Gamma$ from (\ref{eq:def-Gamma}), the construction of which may be readily altered to give a map  \[
        \Gamma \colon \left( \Map_{\Dres}^{\Spt} \left(\SD_+, \Cal{X}\right)^{\Sigma}\right)^{\circhat \ell} \to \Map_{\Dres}^{\Spt}\left((\SD_+)^{\boxcirc \ell}, \Cal{X}^{\boxcirc \ell}\right)^{\Sigma}.\] 

In general, $\lambda_{\ell}$ is given by the following composite (compare with \cite[(1.13)]{AC-coalg}) \begin{align*}
    \Map_{\Dres}^{\Top} &\left(\SD_+, (\SD_+)^{\boxcirc \ell}\right)^{\Sigma}\dot{\wedge}_{\Sigma}  \left(\Map_{\Dres}^{\Spt}\left(\SD_+, \Cal{X}\right)^{\Sigma}\right)^{\circhat \ell}\\
    &
    \xrightarrow{\id\dot{\wedge}_{\Sigma}  \Gamma} 
    \Map_{\Dres}^{\Top}\left(\SD_+, (\SD_+)^{\boxcirc\ell} \right)^{\Sigma} \dot{\wedge}_{\Sigma}  \Map_{\Dres}^{\Spt}\left((\SD_+)^{\boxcirc \ell}, \Cal{X}^{\boxcirc \ell}\right)^{\Sigma} 
    \\
    &
    \xrightarrow{\text{compose}} \Map_{\Dres}^{\Spt}\left(\SD_+, \Cal{X}^{\boxcirc \ell} \right)^{\Sigma} 
    \\
    & \xrightarrow{m_*} \Map_{\Dres}^{\Spt}\left(\SD_+, \Cal{X}\right)^{\Sigma}
\end{align*}
where the composition map is adjoint to the composite of evaluation maps
\begin{align}
    &\SD_+ \wedge \Map_{\Dres}^{\Top}( \SD_+, (\SD_+)^{\boxcirc \ell} )^{\Sigma} \to  (\SD)^{\boxcirc \ell}_+, \\
    &(\SD_+)^{\boxcirc \ell} \dot{\wedge}_{\Sigma} \Map_{\Dres}^{\Spt} ((\SD_+)^{\boxcirc \ell}, \Cal{X}^{\boxcirc \ell} )^{\Sigma} \to \Cal{X}^{\boxcirc \ell}.
    \nonumber \end{align}
and $m_*$ is induced by the $\boxcirc$-monoid structure on $\Cal{X}$.


To show that $\lambda$ is associative we consider the following diagram, with $\psi'\in \Cal{A}_n$, $\psi_i \in \Cal{A}_{k_i}$ for $i=1,\dots,n$ such that the composite $\xi(\psi';\psi_1,\dots,\psi_n)=\psi \in \Cal{A}_k$. 

\[ \xymatrix{
    &(\SD_+)^{\boxcirc k} \ar[rr]
    &&
    \Cal{X}^{\boxcirc k} \ar[dl]^-{\theta_*} \ar@/^2pc/[dddl]^{m_*}
    \\
    (\SD_+)^{\boxcirc k_1} \boxcirc \cdots \boxcirc (\SD_+)^{\boxcirc k_n} \ar[ur]^-{\mu_{k_1,\dots,k_n}} \ar[rr]
        &&
    {\Cal{X}^{\boxcirc k_1}\boxcirc \cdots \boxcirc \Cal{X}^{\boxcirc k_n}}  \ar[d]^-{m_*\boxcirc \cdots \boxcirc m_*} &
    \\ 
    (\SD_+)^{\boxcirc n} \ar[u]^{\psi_1\boxcirc \cdots \boxcirc\psi_n} \ar[rr]
    &&
    \Cal{X}\boxcirc \cdots \boxcirc \Cal{X} \ar[d]^-{m_*} &
    \\ 
    \SD_+ \ar[u]^<(.3){\psi'} \ar@{.>}@/^1pc/[rr]_{\gamma'} \ar@{.>}@/_1pc/[rr]_{\gamma} \ar@/_2pc/[uuur]|!{[u];[ur]}\hole|!{[uu];[uur]}\hole^<(.6){\psi}  
    &&
    \Cal{X}
    }\]
Note here that $m_*$ is induced by repeatedly applying the pairing $m\colon \Cal{X}\boxcirc \Cal{X}\to\Cal{X}$ from the left, i.e., \[ \Cal{X}\boxcirc \Cal{X} \boxcirc \cdots \boxcirc \Cal{X} \xrightarrow{m\boxcirc \id\boxcirc \cdots \boxcirc \id} \cdots \xrightarrow{m\boxcirc \id\boxcirc\id} \Cal{X} \boxcirc \Cal{X} \boxcirc \Cal{X} \xrightarrow{m\boxcirc \id}\Cal{X} \boxcirc \Cal{X} \xrightarrow{m} \Cal{X}.\] 

The dashed morphisms $\gamma$ and $\gamma'$ are induced, respectively, by  $\lambda_k \xi_{n,(k_1,\dots,k_n)}$ and $\lambda_n (\id \tensordot_{\Sigma} (\lambda_{k_1} \circhat \cdots \circhat \lambda_{k_n})$. Note as well that $\mu_{k_1,\dots,k_n}$ is as in the proof of Proposition \ref{prop:coend-X}, and $\theta_*$ is the grouping map induced by $\theta$ (see Section \ref{boxcirc-monoid}), by which it follows that $\gamma$ and $\gamma'$ must agree. 

For unitality we recall that  $\epsilon\colon \Isym\to \Cal{A}$ is induced by the inclusion at $\id_{\bdel}$ and therefore the composite $\lambda_1[n]\varepsilon[n]$ in the following diagram \[
\xymatrix{
    \Cal{A}_1(n;n) \wedge_{\Sigma_n} \Map_{\Dres}^{\Spt} \left(\SD_+[n],\Cal{X}[n]\right)^{\Sigma_n}
    \ar[r]^-{\lambda_1[n]} & \Map_{\Dres}^{\Spt}\left(\SD_+[n],\Cal{X}[n]\right)^{\Sigma_n}
    \\ 
    \left( \Sigma_n \right)_+ \wedge_{\Sigma_n} \Map_{\Dres}^{\Spt}\left(\SD_+[n],\Cal{X}[n]\right)^{\Sigma_n}
    \ar[u]^-{\epsilon[n]} \ar[ru]_-{\cong}  &
}\]
is given by $S^0\wedge \Tot \Cal{X}[n]\xrightarrow{\cong} \Tot \Cal{X}[n]$.
\end{proof}

\subsection{An equivalence of $A_{\infty}$-operads between $\Cal{O}$ and $\partial_* \Id_{\Alg{O}}$} \label{O-and-dern-equiv}
We now show that the induced operad structure on $\partial_* \Id_{\Alg{O}}$ from Proposition \ref{main-1} agrees with the induced $\Cal{A}$-algebra structure on $\Cal{O}$, thus proving Theorem \ref{main}(b). Let $\rho \colon \Cal{A}\xrightarrow{\sim} \Oper$ be the map described in Remark \ref{rem:rho} and note an operad $\Cal{O}\in \Alg{\mathsf{Oper}}$ is in algebra over $\Cal{A}$ via the forgetful functor $\rho^*$.

\begin{proof}[Proof of Theorem \ref{main}(b)]
     By equivalence of $A_{\infty}$-operads we mean equivalence of $\Cal{A}$-algebras which restricts to an equivalence of underlying symmetric sequences. 
     
     Recall there is a natural coaugmentation $\Cal{O}\to C(\Cal{O})$ via $\Cal{O}\to J$. We have shown in Section \ref{sec:fibrant-model} that the coface $k$-cubes associated to \[ \Cal{O}\to C(\Cal{O}) \quad \text{and} \quad \partial_*\Id_{\Alg{O}}\to \holim_{\bdel^{\leq n-1}} \partial_* ( (UQ)^{\bullet+1})\]  are equivalent. Denoting these $k$-cubes by $\Cal{X}_k$ and $\Cal{Y}_k$, respectively, we note for $k\geq n\geq 1$ that as $\Cal{Y}_k[n]$ is homotopy cartesian so is $\Cal{X}_k[n]$. That is to say, for all $n\geq 1$ \begin{equation*}
    \label{On-->der_n}
        \Cal{O}[n] \xrightarrow{\sim} \holim_{\bdel}(  C(\Cal{O})[n]).
    \end{equation*}
    
    Let $\underline{\Cal{O}}$ be the constant cosimplicial object in $\SymSeq$ on $\Cal{O}$. From the above, the coaugmentation $\Cal{O}\to C(\Cal{O})$ induces a map of cosimplicial symmetric sequences $\varphi\colon \underline{\Cal{O}} \to  C(\Cal{O})$ such that  $\Tot \underline{\Cal{O}} \xrightarrow{\sim} \Tot C(\Cal{O})$. Moreover, $\underline{\Cal{O}}$ inherits a natural $\boxcirc$-monoid structure induced by the operad structure maps $\Cal{O}\circ\Cal{O}\to\Cal{O}$ and $I\to \Cal{O}$, and $\varphi$ respects this structure (i.e., is a map of $\boxcirc$-monoids).
    
    For each $n\geq 0$ we have \begin{align*}
        \Map_{\Dres}^{\Spt} (\Sigma_n {\cdot} \Delta^{\bullet}_+, \underline{\Cal{O}}[n])^{\Sigma_n}\xrightarrow{\sim}& 
        \Map_{\Dres}^{\Spt} (\Sigma_n {\cdot} \underline{\Delta^0}_+, \underline{\Cal{O}}[n])^{\Sigma_n} \\
        \cong& \Map^{\Spt}( \Sigma_n {\cdot} S^0, \Cal{O}[n])^{\Sigma_n}\cong \Map^{\Spt}(S^0, \Cal{O}[n])\cong \Cal{O}[n].
    \end{align*} and therefore, $\Tot \underline{\Cal{O}}\xrightarrow{\sim} \Cal{O}$. Thus, there are commuting diagrams for all $n\geq 0$ \begin{equation} \xymatrix{
    \Cal{A}_n \tensordot_{\Sigma}  (\rho^*\Cal{O})^{\circhat n} \ar[d]
    & 
    \Cal{A}_n \tensordot_{\Sigma}  (\Tot \underline{\Cal{O}})^{\circhat n} \ar[d] \ar[r] \ar[l]
    &
    \Cal{A}_n \tensordot_{\Sigma}  (\Tot C(\Cal{O}))^{\circhat n} \ar[d]
    \\
    \rho^*\Cal{O}
    &
    \Tot \underline{\Cal{O}} \ar[r]^-{\sim} \ar[l]_-{\sim}
    &
    \Tot C(\Cal{O}) 
    }\end{equation}
    where the left is the $\Cal{A}$-algebra structure map on $\rho^* \Cal{O}$ (which must factor through $\Oper$) and the right is the $\Cal{A}$-algebra structure map on $\partial_* \Id_{\Alg{O}}$.\end{proof}

\subsection{A class of $\partial_* \Id_{\Alg{O}}$-algebras} 
Though it will follow abstractly from Theorem \ref{O-and-dern-equiv}, the following corollary show that it is possible to describe an action of $\partial_* \Id_{\Alg{O}}$ explicitly on the $\TQ$-completion of sufficiently connected $\Cal{O}$-algebras. Recall that $X\simeq X_{\TQ}^{\wedge}$ for $0$-connected $X \in \Alg{O}$.

\begin{corollary} \label{TQ-completion-is-der-alg}
    Any $0$-connected $\Cal{O}$-algebra $X$ is weakly equivalent to an algebra over $\partial_* \Id_{\Alg{O}}$ via $X\mapsto X_{\TQ}^{\wedge}$. 
\end{corollary}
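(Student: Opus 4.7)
The plan is to extend the $\boxcirc$-monoid framework of Proposition \ref{main-1} to left modules, and then exhibit the Bousfield-Kan cosimplicial resolution of $X$ as such a module over $C(\Cal{O})$. Let $\bar{C}(X)$ denote the cosimplicial symmetric sequence concentrated at level $0$ with $\bar{C}(X)^{k}[0] = J^{(k+1)} \circ_{\Cal{O}}(X) = (UQ)^{k+1}(X)$; by construction $\Tot \bar{C}(X) = X_{\TQ}^{\wedge}$. The first step is to construct a natural pairing
\[ m\colon C(\Cal{O}) \boxcirc \bar{C}(X) \to \bar{C}(X) \]
entirely analogous to the one described in Proposition \ref{def:box-circ-prod-map}: at cosimplicial level $n$, assemble from maps
\[ m_{p,q}\colon J^{(p+1)} \circ \bigl( J^{(q+1)} \circ_{\Cal{O}}(X) \bigr) \to J^{(p+q+1)} \circ_{\Cal{O}}(X), \quad p+q=n, \]
each induced by the operad multiplication $J \circ J \to J$ applied at the middle seam, with unit supplied by the coaugmentation $I \to J$ together with $I \circ_{\Cal{O}}(X) \cong X$. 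Checking that these maps assemble coherently and satisfy the module analogs of the associativity and unitality diagrams (\ref{eq:assoc-boxcirc}) and (\ref{eq:unit-boxcirc}) is a bookkeeping exercise parallel to that of Proposition \ref{def:box-circ-prod-map}.

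Next, I would verify the module-level analog of Proposition \ref{main-1}: if $\Cal{X}$ is a $\boxcirc$-monoid and $\Cal{M}$ is a left $\Cal{X}$-module cosimplicial symmetric sequence, then $\Tot \Cal{M}$ carries a natural structure of algebra (in the sense of Definition \ref{A-infty-alg}) over the $A_{\infty}$-operad $\Tot \Cal{X}$. The construction of the structure maps
\[ \eta_{\ell}\colon \Cal{A}_{\ell} \dot{\wedge}_{\Sigma} \bigl( (\Tot \Cal{X})^{\circhat (\ell-1)} \circhat \Tot \Cal{M} \bigr) \to \Tot \Cal{M} \]
mirrors that of the maps $\lambda_{\ell}$ built in Section \ref{sec:1a}: feed the input through the map $\Gamma$ of (\ref{eq:def-Gamma}), compose, and apply the iterated $\boxcirc$-action in the last coordinate. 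The associativity diagram (\ref{assoc-A-infty-alg}) and unitality diagram (\ref{unit-A-infty-alg}) then follow from essentially the same pentagon-and-triangle calculation as in the proof of Proposition \ref{main-1}, with module-action replacing monoid-multiplication in the final coordinate.

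Applying this machinery to $\Cal{X} = C(\Cal{O})$ and $\Cal{M} = \bar{C}(X)$ endows $X_{\TQ}^{\wedge} = \Tot \bar{C}(X)$ with the structure of an algebra over $\Tot C(\Cal{O}) \simeq \partial_* \Id_{\Alg{O}}$. For $0$-connected $X$, the coaugmentation $X \to X_{\TQ}^{\wedge}$ is a weak equivalence by Remark \ref{TQ-completion} (a consequence of Proposition \ref{higher-stabilization}), which yields the corollary. The main obstacle is purely technical: to make the module-level adaptation of Proposition \ref{main-1} genuinely routine, one must simultaneously track the $\Gamma$ construction and verify that the pairing $m_{p,q}$ is well-defined on the relative composition product -- i.e.,\ that the operad multiplication $J \circ J \to J$ is compatible with the right $\Cal{O}$-action implicit in $\circ_{\Cal{O}}(X)$. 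This last point is where the $\boxcirc$-module axioms for $\bar{C}(X)$ require the most care, but in the end it reduces to the standard fact that the operad map $\Cal{O} \to J$ is a map of $(\Cal{O},\Cal{O})$-bimodules.
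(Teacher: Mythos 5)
Your proposal is correct and follows essentially the same route as the paper: the paper likewise builds a left-module pairing $C(\Cal{O})\boxcirc C(X)\to C(X)$ by modifying Proposition \ref{def:box-circ-prod-map}, adapts the proof of Proposition \ref{main-1} to obtain the structure maps of Definition \ref{A-infty-alg} on the level-$0$-concentrated symmetric sequence $\bar{X}^{\wedge}_{\TQ}$, and finishes with the equivalence $X\simeq X^{\wedge}_{\TQ}$ for $0$-connected $X$. Your extra remarks on compatibility of $J\circ J\to J$ with the right $\Cal{O}$-action are consistent with the paper's caveat that $C(\Cal{O})$ is not a strict monoid and the module axioms must be stated with care.
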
    
    
\begin{proof}
    A straightforward modification of the proof of Proposition \ref{def:box-circ-prod-map} permits a well-defined map of cosimplicial diagrams \[r\colon  C(\Cal{O}) \boxcirc C(X)\to C(X)\] which endows $C(X)$ with the structure of a left module over $ C(\Cal{O})$. Strictly speaking we do need to be careful here, as $ C(\Cal{O})$ is not a strict monoid, so the module structure is obtained by replacing the right-most instances of $ C(\Cal{O})$ with $C(X)$ in (\ref{eq:assoc-boxcirc}) and (\ref{eq:unit-boxcirc}). Nonetheless, a straightforward adaptation of the proof of Proposition \ref{main-1} demonstrates maps \[\Cal{A}_{\ell} \tensordot_{\Sigma}  \left( (\Tot C(\Cal{O}))^{\circhat (\ell-1)} \circhat \bar{X}_{\TQ}^{\wedge}\right)\to \bar{X}_{\TQ}^{\wedge}\] where $\bar{X}_{\TQ}^{\wedge}$ is the symmetric sequence concentrated at level $0$ with value $X_{\TQ}^{\wedge}$, as required of Definition \ref{A-infty-alg}.
\end{proof}

\begin{remark}
    One intent of the above is to motivate the analogous statement for algebras over the derivatives of the identity in \textit{spaces}, which \textit{a priori} seems a bit more mysterious. Using the model $\partial_* \Id_{\mathsf{Top}_*} =\holim_{\bdel} C(\underline{S})$ (see Remark \ref{rem:C(S)}) we further show in \cite{Clark-2} that for any $\underline{S}$-coalgebra $Y$ in spectra (e.g., $Y=\Sp X$) the derived primitives $\Prim_{\underline{S}}(Y)$ inherits the structure of an algebra over $\partial_* \Id_{\mathsf{Top}_*}$ via a pairing of cosimplicial objects with respect to $\boxcirc$ (see also \cite{Ching-thesis}, \cite{Heuts-vn}, \cite{BR}).
    
    In this framework, Corollary \ref{TQ-completion-is-der-alg} tells us that any $0$-connected $X\in \Alg{O}$ is equivalent to its derived primitives $\Prim_{B(\Cal{O})}( \TQ(X) )$ (with respect to a suitable coalgebra structure on $B(\Cal{O})$, see Section \ref{B(O)}) as $\partial_* \Id_{\Alg{O}}\simeq \Cal{O}$-algebras. Note also that $\mathsf{Prim}_{B(\Cal{O})}(\TQ(X))\simeq X^{\wedge}_{\TQ}$. As such, one possible future avenue for our work is to try to push this result to work for any nilpotent $\Cal{O}$-algebra. This could potentially be used to prove that any nilpotent $\Cal{O}$-algebra is equivalent to its $\TQ$-completion (see also \cite{Blom}, \cite{Niko}, and further compare with \cite{BK}, \cite{AK}, \cite{Carl}, \cite{BH-2} that any nilpotent space is equivalent to its completion with respect to $\U \Sp$).

\end{remark}

\bibliographystyle{plain}
\bibliography{biblo}


\end{document}